\theoremstyle{definition}
\newtheorem{defn}{Definition}[section]
\newtheorem{fact}[defn]{Fact}
\newtheorem{prop}[defn]{Proposition}
\newtheorem{thm}[defn]{Theorem}
\newtheorem{lem}[defn]{Lemma}
\newtheorem{question}[defn]{Question}
\newtheorem{remark}[defn]{Remark}
\newtheorem{claim}[defn]{claim}
\title[Rainbow Ramsey theorem and CRT without AC]{On the Rainbow Ramsey theorem, and the Canonical Ramsey Theorem for pairs without AC}
\author{Amitayu Banerjee}
\address{Alfr\'ed R\'enyi Institute of Mathematics, Reáltanoda utca 13-15, 1053, Budapest, Hungary}
\email[Corresponding author]{banerjee.amitayu@gmail.com}
\author{Alexa Gopaulsingh}
\address{E\"otv\"os Lor\'and University, Department of Logic, M\'{u}zeum krt. 4, 1088, Budapest, Hungary}
\email{alexa279e@gmail.com}
\author{Zal\'{a}n Moln\'{a}r}
\address{E\"otv\"os Lor\'and University, Department of Logic, M\'{u}zeum krt. 4, 1088, Budapest, Hungary}
\email{mozaag@gmail.com}
\date{}
\subjclass[2020]{03E25, 03E35, 06A07.}
\keywords{Axiom of Choice, Ramsey’s Theorem, Rainbow Ramsey theorem, Canonical Ramsey theorem of Erd\H{o}s and Rado, Kurepa's theorem, Fraenkel-Mostowski models of $\mathsf{ZFA}$, symmetric model of
$\mathsf{ZF}$}
\begin{document}
\begin{abstract}
Fix $m,n\in\omega\backslash\{0,1\}$. In set theory without the Axiom of Choice ($\mathsf{AC}$), we study the deductive strength of a generalized version of the Rainbow Ramsey theorem (``If $X$ is an infinite set and $\chi: [X]^m \rightarrow C$ is an $n$-bounded coloring for some infinite set $C$, then there is an infinite $Y \subseteq X$ which is polychromatic for
$\chi$''), we abbreviate by $\mathsf{RRT}^{m}_{n}$, and the Canonical Ramsey Theorem for pairs introduced by Erd\H{o}s and Rado, concerning their interrelation with
several weak choice forms. In this direction, we extend some work by Justin Palumbo from 2013. Moreover, we partially answer two open questions concerning the relation of Kurepa's Theorem (``Every poset such that all of its antichains are finite and all of its chains are countable is countable'') with weak choice forms.   
\end{abstract}
\maketitle

\section{Introduction}
In 1950, Erd\H{o}s and Rado extended Ramsey's theory of monochromatic graphs to allow infinitely many colors \cite{ER1950} and established the proposition   
{\em ``Given any coloring $f : [\omega]^{2} \rightarrow C$, there exists an infinite set $Y\subseteq \omega$ with an ordering $R$ such that either}
\begin{enumerate}[(a)]
{\em 
  \item $f(\{a_{1}, b_{1}\}) = f(\{a_{2}, b_{2}\})$ for all $a_{1}, b_{1}, a_{2}, b_{2} \in Y$, or
  \item $f(\{a_{1}, b_{1}\}) = f(\{a_{2}, b_{2}\})$ iff $a_{1} = a_{2}$ for all $a_{1}, b_{1}, a_{2}, b_{2} \in Y$ with $a_{1}Rb_{1}$ and $a_{2}R 
  b_{2}$, or
  \item $f(\{a_{1}, b_{1}\}) = f(\{a_{2}, b_{2}\})$ iff $b_{1} = b_{2}$ for all $a_{1}, b_{1}, a_{2}, b_{2} \in Y$ with $a_{1}Rb_{1}$ and $a_{2}R b_{2}$, or
  \item $f(\{a_{1}, b_{1}\}) = f(\{a_{2}, b_{2}\})$ iff $(a_{1} = a_{2}$ and $b_{1} = b_{2})$ for all $a_{1}, b_{1}, a_{2}, b_{2} \in Y$''
}
\end{enumerate}
in $\mathsf{ZFC}$.
We consider a natural generalization
of the above statement by replacing $\omega$ with an arbitrary infinite set $X$, and we abbreviate the generalized statement by $\mathsf{CRT}$.
In 2013, Palumbo \cite[Theorem 2.2]{Pal2013} introduced the following generalized version of the Rainbow Ramsey theorem, which we abbreviate by $\mathsf{RRT}^{2}_{2}$:
{\em ``If $X$ is an infinite set and $\chi: [X]^2 \rightarrow C$ is a $2$-bounded coloring, then there is an infinite $Y \subseteq X$ which is polychromatic for
$\chi$''}.  In \cite[Theorem 2.3]{Pal2013}, it was shown that $\mathsf{RRT}^{2}_{2}$ cannot
be proved without using some form of choice. In particular, Palumbo proved that $\mathsf{RRT}^{2}_{2}$ fails in the second Fraenkel model, and holds in the basic Cohen model (labeled as Model $\mathcal{M}_{1}$ in \cite{HR1998}) where the infinite Ramsey’s theorem for pairs ($\mathsf{RT}$) fails (see \cite[Theorems 2.3, 2.4]{Pal2013} and \cite[Theorem 1]{Bla1977}). However, Galvin's trick in \cite[Page 951]{Pal2013}, established that $\mathsf{RRT}^{2}_{2}$ follows from $\mathsf{AC}_{2}$ + $\mathsf{RT}$ in $\mathsf{ZF}$ i.e., Zermelo--Fraenkel set theory without $\mathsf{AC}$ (see Fact \ref{Fact 3.1}(1); complete definitions of the choice forms will be given in Section 2).
It is well-known that $\mathsf{CRT}$ implies $\mathsf{RRT}^{2}_{n}$ as well as $\mathsf{RT}$ in $\mathsf{ZFC}$.  Kleinberg \cite{Kle1969}, Blass \cite{Bla1977}, Forster--Truss \cite{FT2007}, and Tachtsis \cite{Tac2016} investigated the strength of $\mathsf{RT}$ in the hierarchy of weak choice forms (see Fact \ref{Fact 3.1}). In the current paper, we determine the placement of $\mathsf{RRT}^{m}_{n}$ and $\mathsf{CRT}$ in the hierarchy of weak choice forms. 

\subsection{Results.} 
Fix any $m,n\in\omega\backslash\{0,1\}$. The first author proves the following:
\begin{enumerate}
    \item $\mathsf{RRT}^{2}_{3}$ implies $\mathsf{AC}_{3}^{-}$ (``Every infinite family $\mathcal{A}$ of $3$-element sets has an infinite subfamily $\mathcal{B}$ with a choice function'') and $\mathsf{RRT}^{m}_{n}$ implies $\mathsf{AC}_{2}^{-}$ in $\mathsf{ZF}$ (Proposition \ref{Proposition 3.3}).

    \item $\mathsf{DC}$ (the principle of Dependent Choices) is equivalent to the statement ``If $G=(V_{G}, E_{G})$ is an infinite graph, then for all coloring $f:[V_{G}]^{2}\rightarrow \{0,1\}$ if all 
    $0$-monochromatic sets are finite, then there is a maximal $0$-monochromatic set'' in $\mathsf{ZF}$ (Theorem \ref{Theorem 3.5}).
     
    \item $\mathsf{RRT}^{m}_{n}$ is independent of $\mathsf{HT}$(Hindman's theorem), Kurepa's theorem stated in the abstract, $\mathsf{DT}$ (Dilworth’s theorem for infinite posets with finite
    width), ``there are no amorphous sets'', and many other combinatorial principles in $\mathsf{ZFA}$ i.e., Zermelo-Fraenkel set theory with the Axiom of Extensionality weakened to allow the existence
    of atoms (Theorem \ref{Theorem 4.1}, Remark \ref{Remark 7.3}). For recent research on $\mathsf{DT}$, 
    Kurepa's theorem, $\mathsf{HT}$, and weak choice forms the reader is referred to Tachtsis \cite{Tac2022, Tac2019,Tac2022a}, Banerjee \cite{Ban2023}, and Fern\'{a}ndez-Bret\'{o}n \cite{Fer2023}. 
    

    \item $\mathsf{DF=F}$ (Any Dedekind-finite set is finite) is strictly stronger than $\mathsf{CRT}$ in $\mathsf{ZFA}$ (Theorem \ref{Theorem 4.9}).
    \item $\Delta SL$ + $\mathsf{RRT}^{m}_{n}$ does not imply $\mathsf{RT}$ in $\mathsf{ZF}$, where $\Delta SL$ is the Delta System lemma restricted to uncountable sets. In particular, $\Delta SL$ + $\mathsf{RRT}^{m}_{n}$ holds in the basic Cohen model $\mathcal{M}_{1}$ (Theorem \ref{Theorem 5.8}(1)).  
    
    \item $\mathsf{DF=F}$ is strictly stronger than $\mathsf{RRT}^{m}_{n}$ in $\mathsf{ZF}$ (Theorem \ref{Theorem 5.8}(2)).
    
    \item $\mathsf{CRT}$ is strictly stronger than $\mathsf{RRT}^{2}_{n}$ in $\mathsf{ZF}$ (Theorem \ref{Theorem 5.8}(3)).
    
    \item $\mathsf{AC^{LO}}$ (``Every linearly ordered set of non-empty sets has a choice function'') does not imply Kurepa’s theorem in $\mathsf{ZFA}$ (Remark \ref{Remark 7.1}). 
    \item $\mathsf{WOAM}$ (``Every set is either well-orderable or has an amorphous subset'') implies Kurepa's theorem in $\mathsf{ZF}$ (Remark \ref{Remark 7.2}). 

    
\end{enumerate}
The results in (8) and (9) partially answer two questions from Tachtsis \cite{Tac2022} (see \cite[Questions 6.3, 6.4]{Tac2022}).\footnote{The ideas of these results are mainly motivated by two recent results due to Tachtsis (see \cite[Theorems 3,6]{Tac2024}).}
The result in Theorem \ref{Theorem 4.1}(1) is inspired by the arguments in the proof of \cite[Theorem 2]{Bla1977}, where Blass proved that $\mathsf{RT}$ holds in the basic Fraenkel model (labeled as Model $\mathcal{N}_{1}$ in \cite{HR1998}), whereas the result in Theorem \ref{Theorem 4.1}(2) as well as the result in (4) are inspired by the arguments in the proof of \cite[Theorem 2.4]{Tac2016}, where Tachtsis proved that $\mathsf{RT}$ holds in the Mostowski linearly ordered model (labeled as Model $\mathcal{N}_{3}$ in \cite{HR1998}). The result in (5) is inspired by the arguments in the proof of \cite[Theorem 7(i)]{Tac2018}, where Tachtsis showed that $\Delta SL$ holds in $\mathcal{N}_{1}$ as well as the arguments of Palumbo from \cite[proof of Lemma 2.6]{Pal2013}. 
In Proposition \ref{Proposition 3.3}, we also observe the following:

\begin{enumerate} 
    \item $\mathsf{RRT}^{2k+1}_{r}$ implies $\mathsf{AC}_{n}^{-}$ for all $r\geq {n \choose 2}^{k} n (k+1)$ in $\mathsf{ZF}$.
    \item $\mathsf{RRT}^{2k}_{r}$ implies $\mathsf{AC}_{n}^{-}$ for all $r\geq {n \choose 2}^{k-1} n^{2} {k+1 \choose 2}$ in $\mathsf{ZF}$.
\end{enumerate}

\subsection{Amorphous sets and Ramsey type theorems} Banerjee--Gopaulsingh \cite{BG2023} observed that $\mathsf{EDM}$ (``If $G=(V_{G}, E_{G})$ is a graph such that $V_{G}$ is uncountable, then for all coloring $f:[V_{G}]^{2}\rightarrow \{0,1\}$ either there is an uncountable set monochromatic in color $0$, or there is a countably infinite set monochromatic in color 1'') holds in $\mathcal{N}_{1}$, where the set of atoms is an amorphous set, as well as in $\mathcal{N}_{3}$, where there are no amorphous sets.\footnote{We note that $\mathsf{EDM}$ is the Erd\H{o}s--Dushnik--Miller theorem for uncountable sets.}
For recent research on $\mathsf{EDM}$, 
and weak choice forms the reader is referred to Tachtsis \cite{Tac2024} and Banerjee-Gopaulsingh \cite{BG2023}.     
Howard--Saveliev--Tachtsis \cite[Theorems 3.26, 3.9(1)]{HST2016} proved that $\mathsf{CS}$ (``Every partially ordered set without a maximal element has two disjoint cofinal subsets'') holds in $\mathcal{N}_{1}$, but fails in $\mathcal{N}_{3}$. However, the consistency of ``$\mathsf{EDM} + \mathsf{CS} + $ ``there are no amorphous subsets'''' is unknown to the best of our knowledge. 
In Theorem \ref{Theorem 6.5}, we study a {\em non-trivial argument} to observe the consistency of the statement 
\begin{center}
``$\mathsf{EDM} + \mathsf{CS} + $ ``there are no amorphous subsets'' + $\mathsf{RRT}^{m}_{n}$''
\end{center} 
in a variant of the finite partition model introduced by Bruce \cite{Bru2016} in 2016, say $\mathcal{V}_{fp}$.

\subsection{Diagram of results}
Fix any $m,n\in\omega\backslash\{0,1\}$ and $k\in \omega\backslash\{0\}$. In Figure 1, known results are depicted with dashed arrows, new implications or non-implications in $\mathsf{ZF}$ are mentioned with simple black arrows, and new non-implications in $\mathsf{ZFA}$ are depicted with thick dotted black arrows.

\begin{figure}[!ht]
\begin{minipage}{\textwidth}
\begin{tikzpicture}[scale=6]
\draw (2.15,0.71) node[above] {$\mathsf{AC}$};
\draw[dashed, -triangle 60] (2.15,0.8) -- (2.15,1);
\draw (2.15,1) node[above] {$\mathsf{AC}_{n}$};
\draw[dashed, -triangle 60] (2.15,1.1) -- (1.85,1.26);
\draw[dashed, -triangle 60] (2.15,1.1) -- (2.45, 1.26);
\draw (1.85,1.25) node[above] {$\mathsf{AC}_{n}^{-}$};
\draw (2.45, 1.25) node[above] {``There are no amorphous sets''};

\draw[dashed, -triangle 60] (2.22,0.75) -- (2.37,0.75);
\draw (2.45,0.7) node[above] {$\mathsf{DC}$};

\draw[dashed, -triangle 60] (2.45,0.8) -- (2.45,1.25);
\draw[dashed, -triangle 60] (2.52,0.75) -- (2.7,0.75);

\draw (2.9,0.7) node[above] {$\mathsf{DF=F}$};
\draw[-triangle 60] (3.05,0.75) -- (3.2,0.75);
\draw[-triangle 60] (3.2,0.72) -- (3.05,0.72);
\draw (3.16,0.74) -- (3.12,0.7);
\draw[-triangle 60] (3.05,0.75) -- (3.2, 0.9);
\draw[ultra thick, dotted, -triangle 60] (3.2, 0.86)--(3.09,0.76);
\draw  (3.14,0.83)--(3.18,0.8);
\draw[-triangle 60] (3.05,0.75) -- (3.2, 1.07);
\draw[-triangle 60] (3.05,0.75) -- (3.2, 1.25);
\draw[dashed, -triangle 60] (2.9,0.7) -- (2.9,0.56);
\draw (2.87,0.45) node[above] {$\mathsf{RT}^{m}_{n}$};
\draw[dashed, -triangle 60] (2.9,0.3) -- (2.9,0.45);
\draw (2,0.45) node[above] {$\Delta S L$+$\mathsf{RRT}^{m}_{n}$};
\draw[-triangle 60] (2.2,0.5) -- (2.32, 0.5);
\draw (2.26,0.52) -- (2.22,0.47);
\draw (2.4,0.46) node[above] {$\mathsf{RT}$};

\draw (2.9,0.2) node[above] {``$\mathsf{AC}_{\leq n}$+$\mathsf{RT}^{m}_{n}$''};
\draw[dashed, -triangle 60] (3,0.3) -- (3.3,0.7);
\draw[-triangle 60] (2.78,0.5) -- (2.48,0.5);

\draw (3.33,0.68) node[above] {$\mathsf{RRT}^{m}_{n}$};
\draw[-triangle 60] (3.45,0.75) -- (3.88,0.75);
\draw (4,0.67) node[above] {$\mathsf{AC^{-}_{2}}$};

\draw (3.28,0.85) node[above] {$\mathsf{CRT}$};
\draw[-triangle 60] (3.35,0.9) -- (3.5,0.9);
\draw[-triangle 60] (3.5,0.87) -- (3.35,0.87);
\draw (3.44,0.89) -- (3.4,0.85);
\draw (3.75,0.83) node[above] {$(\forall n\geq 2)\mathsf{RRT}^{2}_{n}$};
\draw[-triangle 60] (3.97,0.9) -- (4.07,0.9);
\draw (4.17,0.83) node[above] {$\mathsf{RRT}^{2}_{3}$};
\draw[-triangle 60] (4.27,0.9) -- (4.37,0.9);
\draw (4.45,0.83) node[above] {$\mathsf{AC^{-}_{3}}$};

\draw (3.42,0.97) node[above] {$\mathsf{RRT}^{2k+1}_{{n \choose 2}^{k} n (k+1)}$};
\draw[-triangle 60] (3.62,1.05) -- (3.88,1.2);
\draw (4,1.2) node[above] {$\mathsf{AC}^{-}_{n}$};

\draw (3.45,1.17) node[above] {$\mathsf{RRT}^{2k}_{{n \choose 2}^{k-1} n^{2} {k+1 \choose 2}}$};
\draw[-triangle 60] (3.69,1.25) -- (3.9,1.25);

\draw[ultra thick, dotted, -triangle 60] (4.2,0.2) -- (3.4,0.7);
\draw[ultra thick, dotted, -triangle 60] (3.35,0.67) -- (4.1,0.2);
\draw (3.1,0.05) node[above] {$X$ if $X\in\{\mathsf{DT},\mathsf{CAC^{\aleph_{0}}},\mathsf{CAC_{1}^{\aleph_{0}}},\mathsf{WOAM}, $ ``Antichain Principle'', $\mathsf{CS}$, $\mathsf{MC},$ $\nexists$ amorphous sets$,\mathsf{HT}\}$};
\draw (3.8,0.42) -- (3.76,0.37);
\draw (3.8,0.48) -- (3.76,0.43);

\draw[thick] (1.66,0.05) rectangle (4.52,0.18);

\end{tikzpicture}
\end{minipage}
\caption{\em Implications/non-implications between the principles.}
\end{figure}
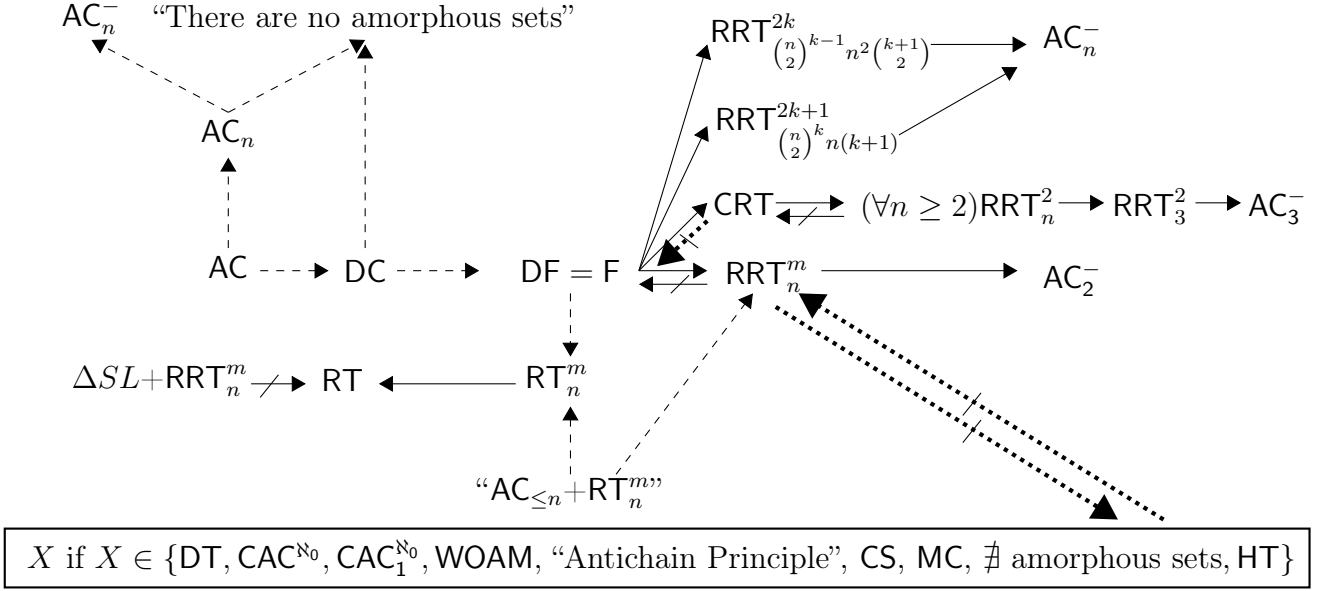

\section{Basics}
\begin{defn}
As usual, $\omega$ denotes the set of natural numbers. For any set $X$ and $n\in\omega$, the set of all $n$-element subsets of $X$ is denoted by $[X]^{n}$ and the set of all finite subsets of $X$ is denoted by $[X]^{<\omega}$. Let $(P, \leq)$ be a partially ordered set, or ‘poset’ in short. 
A subset $D \subseteq P$ is a {\em chain} if $(D, \leq\restriction D)$ is linearly ordered. A subset $A\subseteq P$ is an {\em antichain}
if no two elements of $A$ are comparable under $\leq$. The size of the largest antichain of $(P, \leq)$ is known as its {\em width}.  A subset $C \subseteq P$ is {\em cofinal} in $P$ if for every $x \in P$ there is an element $c \in C$ such that $x \leq c$. A set $X$ is called {\em Dedekind-finite} if $\aleph_{0} \not
\leq \vert X\vert$ i.e., if there is no one-to-one function $f : \omega \rightarrow X$. Otherwise, $X$ is called {\em Dedekind-infinite}. An infinite set $X$ is {\em amorphous} if $X$ cannot be written as a
disjoint union of two infinite subsets.  
A set $X$ is called {\em uncountable} if $\vert X\vert \not\leq \aleph_{0}$. A family $\mathcal{A}$ of sets is called a {\em $\Delta$-system} if there is a set $r$ such that for any two
distinct elements $x, y$ of $\mathcal{A}$, $x \cap y = r$. The set $r$ is called the {\em root} of the $\Delta$-system.
The coloring $\chi: [X]^{m} \rightarrow C$ is {\em $n$-bounded} if $\vert \chi^{-1}[c]\vert\leq n$ for each $c\in C$.
If $\chi: [X]^{m} \rightarrow C$ is a coloring, then $Y\subseteq X$ is {\em monochromatic for $\chi$} if for all $y_{1},y_{2}\in [Y]^{m}$, $\chi(y_{1})=\chi(y_{2})$, i.e., there is a single color which all elements of $[Y]^{n}$ receives, and $Z\subseteq X$ is {\em polychromatic for $\chi$} if for all $z_{1},z_{2}\in [Z]^{m}$, $\chi(z_{1})\neq\chi(z_{2})$, i.e., each member of $[Z]^{n}$ receives a different color. A graph $G=(V_{G}, E_{G})$ consists of a set $V_{G}$ of vertices and a set $E_{G}\subseteq [V_{G}]^{2}$ of edges. An {\em independent set} of $G$ is a set of vertices of $G$, no two of which are adjacent vertices. A {\em maximal independent set} is an independent set that is not a subset of any other independent set.

\end{defn}
\begin{defn}{(A list of combinatorial statements).} Fix $n, m\in \omega\backslash\{0,1\}$.
\begin{enumerate}
\item The {\em rainbow Ramsey theorem}, $\mathsf{RRT}^{m}_{n}$: If $X$ is an infinite set and $\chi: [X]^m \rightarrow C$ is an $n$-bounded coloring for some infinite set $C$, then there is an infinite set $Y \subseteq X$ which is polychromatic for
$\chi$.

\item {\em Ramsey's Theorem}, $\mathsf{RT}^{m}_{n}$: Let $X$ be an infinite set and let $\chi : [X]^{m} \rightarrow n$ be a coloring. Then there is an infinite $Y \subseteq X$ which is monochromatic for $\chi$.  

\item The {\em Canonical Ramsey Theorem for pairs}, $\mathsf{CRT}$: Given any coloring $f : [X]^{2} \rightarrow C$, there exists an infinite set $Y\subseteq X$ with a linear ordering $R$ such that either
\begin{enumerate}[$(a)$]
  \item $f(\{a_{1}, b_{1}\}) = f(\{a_{2}, b_{2}\})$ for all $a_{1}, b_{1}, a_{2}, b_{2} \in Y$, or
  \item $f(\{a_{1}, b_{1}\}) = f(\{a_{2}, b_{2}\})$ iff $a_{1} = a_{2}$ for all $a_{1}, b_{1}, a_{2}, b_{2} \in Y$ with $a_{1}Rb_{1}$ and $a_{2}R 
  b_{2}$, or
  \item $f(\{a_{1}, b_{1}\}) = f(\{a_{2}, b_{2}\})$ iff $b_{1} = b_{2}$ for all $a_{1}, b_{1}, a_{2}, b_{2} \in Y$ with $a_{1}Rb_{1}$ and $a_{2}R b_{2}$, or
  \item $f(\{a_{1}, b_{1}\}) = f(\{a_{2}, b_{2}\})$ iff $(a_{1} = a_{2}$ and $b_{1} = b_{2})$ for all $a_{1}, b_{1}, a_{2}, b_{2} \in Y$.
\end{enumerate}

\item The {\em Erd\H{o}s--Dushnik--Miller theorem}, $\mathsf{EDM}$: If $G=(V_{G}, E_{G})$ is a graph where $V_{G}$ is uncountable, then for all coloring $f:[V_{G}]^{2}\rightarrow \{0,1\}$ either there is an uncountable set monochromatic in color $0$, or there is a countably infinite set monochromatic in color 1.

\item The {\em Chain-Antichain principle}, $\mathsf{CAC}$: Every infinite poset has either an infinite chain or an infinite antichain.

\item {\em Kurepa's theorem}, $\mathsf{CAC_{1}^{\aleph_{0}}}$:  Every poset such that all of its antichains are finite and all of its chains are countable is countable.

\item A variant of Kurepa's theorem, $\mathsf{CAC^{\aleph_{0}}}$:  Every poset such that all of its chains are finite and all of its antichains are countable is countable.

\item {\em Hindman’s theorem}, $\mathsf{HT}$: For every infinite set $X$ and for every coloring $c : [X]^{<\omega} \rightarrow 2$, there exists an infinite, pairwise disjoint family $Y \subseteq [X]^{<\omega}$ such that the set
\begin{center}
    $FU(Y ) = \{\bigcup_{y\in F}y : F \in [Y]^{<\omega} \backslash \{\emptyset\}\}$
\end{center}
is monochromatic for $c$. 

\item  {\em $\Delta$-system Lemma}, $\Delta SL$: For every uncountable family $\mathcal{A}$ of finite sets, there is an uncountable subfamily $\mathcal{B}$ of $\mathcal{A}$ which forms a $\Delta$-system. 

\item The {\em Antichain Principle, $\mathsf{A}$}: Every poset has a maximal antichain.

\item {\em Dilworth’s Theorem}, $\mathsf{DT}$: If $(P,\leq)$ is a poset of width $k$ for some $k\in \omega$, then $P$ can be partitioned into $k$ chains.

\item {\em $\mathsf{CS}$}: Every poset without a maximal element has two disjoint cofinal subsets.

\end{enumerate} 
\end{defn}

\begin{defn}{(A list of choice forms).}

\begin{enumerate}
\item The {\em Axiom of Choice}, $\mathsf{AC}$ \cite[Form 1]{HR1998}: Any family of non-empty sets has a choice function.

\item The {\em Principle of Dependent Choices}, $\mathsf{DC}$ \cite[Form 43]{HR1998}: Let $S$ be a non-empty set and let $R$ be an {\em entire} relation on $S$ (i.e., a binary relation $R$ on $S$ such that $(\forall x \in S)(\exists y \in S)(xRy)$). Then, there exists a sequence $(x_{n})_{n\in\omega}$ of elements of $S$ such
that $x_{n}Rx_{n+1}$ for all $n \in \omega$. 

\item The {\em Principle of Dependent
Choices for $\kappa$}, $\mathsf{DC_{\kappa}}$, 
where $\alpha$ is the ordinal such that $\kappa=\aleph_{\alpha}$
\cite[Form 87($\alpha$)]{HR1998}: Let $S$ be a non-empty set and let $R$ be a binary relation such that for every $\beta<\kappa$ and every $\beta$-sequence $s =(s_{\epsilon})_{\epsilon<\beta}$ of elements of $S$ there exists $y \in S$ such that $s R y$. Then there is a function $f : \kappa \rightarrow S$ such that for every $\beta < \kappa$, $(f\restriction \beta) R f(\beta)$. We note that $\mathsf{DC_{\aleph_{0}}}$ is a reformulation of $\mathsf{DC}$, and $\mathsf{DC_{\aleph_{1}}}$ is strictly stronger than $\mathsf{DC}$ in $\mathsf{ZF}$.

\item $\mathsf{DF = F}$ \cite[Form 9]{HR1998}: Every Dedekind-finite set is finite.
We note that $\mathsf{DC}$ is strictly stronger than $\mathsf{DF=F}$ in $\mathsf{ZF}$.

\item $\mathsf{AC}_{n}^{-}$ for each $n\in\omega\backslash \{0,1\}$ \cite[Form 342($n$)]{HR1998}: Every infinite family $\mathcal{A}$ of $n$-element sets has a {\em partial choice function}, i.e., $\mathcal{A}$ has an infinite subfamily $\mathcal{B}$ with a choice function. 

\item  $\mathsf{AC}_{
\leq n}$ for each $n\in\omega\backslash \{0,1\}$: Every infinite family of non-empty sets, each with at most $n$
elements, has a choice function.

\item The {\em Ordering Principle}, $\mathsf{OP}$ \cite[Form 30]{HR1998}: Every set can be linearly ordered.

\item The {\em Axiom of Choice for finite Sets}, $\mathsf{AC}_{fin}$ \cite[Form 62]{HR1998}:  Every family of non-empty, finite sets has a choice function.

\item The {\em Axiom of Multiple Choice}, $\mathsf{MC}$ (\cite[Form 67]{HR1998}): Every family $\mathcal{A}$ of non-empty sets has a {\em multiple choice function}, i.e., there is a function $f$ with domain $\mathcal{A}$ such that for every $A \in \mathcal{A}$, $\emptyset\neq f(A)\in [A]^{<\omega}$.

\item The {\em Countable Union Theorem}, $\mathsf{CUT}$ \cite[Form 31]{HR1998}: The union of a countable family of countable sets is countable.

\item  $\mathsf{WUT}$ \cite[Form 231]{HR1998}: The union of a well-orderable family of well-orderable
sets is well-orderable.
\item $\mathsf{WOAM}$ \cite[Form 133]{HR1998}: Every set is either well-orderable or has an amorphous
subset.
\item $\mathsf{PC}$: Every uncountable family of countable sets has an uncountable subfamily
with a choice function.
\item $\mathsf{AC^{LO}}$ \cite[Form 202]{HR1998}: Every linearly ordered set of non-empty sets has a 
choice function.

\item $\mathsf{AC^{WO}_{fin}}$ \cite[Form 122]{HR1998}: Any well-ordered set of non-empty finite sets has a choice function.
\end{enumerate}
\end{defn}
\subsection{Permutation models} We provide a brief account of the construction of Fraenkel-Mostowski permutation models of $\mathsf{ZFA}$ from \cite[Chapter 4]{Jec1973}. Let $M$ be a model of $\mathsf{ZFA+AC}$ where $A$ is a set of atoms. Let $\mathcal{G}$ be a group of permutations of $A$ and $\mathcal{F}$ be a normal filter of subgroups of $\mathcal{G}$. 
For a set $x\in M$, we put $sym_{\mathcal {G}}(x) =\{g\in \mathcal {G} \mid g(x) = x\}$, fix$_{\mathcal{G}}(x)=\{\phi \in \mathcal{G} : \forall y \in x (\phi(y) = y)\}$, and  $\mathsf{TC}(x)$ is the transitive closure of $x$ in $M$. 
\begin{enumerate}
\item The permutation model $\mathcal{N}_{\mathcal{F}}$ with respect to $M$, $\mathcal{G}$  and $\mathcal{F}$ is defined by the equality:
\begin{center}

    $\mathcal{N}_{\mathcal{F}} = \{x \in M : (\forall t \in \mathsf{TC}(\{x\}))(sym_{\mathcal{G}}(t) \in \mathcal{F})\}$.
\end{center}
\item The permutation model $\mathcal{N}_{\mathcal{I}}$ with respect to $M$, $\mathcal{G}$ and a normal ideal $\mathcal{I}\subseteq\mathcal{P}(A)$ is defined by the equality:
\begin{center}

    $\mathcal{N}_{\mathcal{I}} = \{x \in M : (\forall t \in \mathsf{TC}(\{x\}))(\exists E \in \mathcal{I})($fix$_{\mathcal{G}}(E) \subseteq sym_{\mathcal{G}}(t))\}$.
\end{center}
\end{enumerate}

We recall that $\mathcal{N}_{\mathcal{F}}$ and $\mathcal{N}_{\mathcal{I}}$ are models of $\mathsf{ZFA}$ (cf. \cite[Theorem 4.1, page 46]{Jec1973}). We say $E\in \mathcal{I}$ is a {\em support} of a set $\sigma\in \mathcal{N}_{\mathcal{I}}$ if fix$_{\mathcal{G}}(E)\subseteq sym_{\mathcal{G}} (\sigma$). In this paper, we follow the labeling of the models from \cite{HR1998}. $\mathcal{N}_{1}$ is the basic Fraenkel model, $\mathcal{N}_{2}$ is the second Fraenkel model, $\mathcal{N}_{3}$ is the Mostowski linearly ordered model, and $\mathcal{N}_{26}$ is Brunner/Pincus’s Model (cf. \cite{HR1998}).
\begin{lem}\label{Lemma 2.4}
{\em An element $x$ of $\mathcal{N}_{\mathcal{I}}$ is well-orderable in $\mathcal{N}_{\mathcal{I}}$ if and only if {\em fix}$_{\mathcal{G}}(x)\in \mathcal{F}_{\mathcal{I}}$ where $\mathcal{F}_{\mathcal{I}}$ is the normal filter generated by the filter base $\{${\em fix}$_{\mathcal{G}}(E): E\in\mathcal{I}\}$ {\em (cf. \cite[Equation (4.2), page 47]{Jec1973})}. Thus, an element $x$ of $\mathcal{N}_{\mathcal{I}}$ with support $E$ is well-orderable in $\mathcal{N}_{\mathcal{I}}$ if {\em fix}$_{\mathcal{G}}(E) \subseteq$ {\em fix}$_{\mathcal{G}}(x)$.
}
\end{lem}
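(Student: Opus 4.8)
The plan is to unwind both directions directly from the definitions of $\mathcal{N}_{\mathcal{I}}$ and of the generated filter $\mathcal{F}_{\mathcal{I}}$, using throughout the standard fact that every $\phi\in\mathcal{G}$, extended to $M$ by $\in$-recursion, fixes every pure set, in particular every ordinal. For the forward implication, I would start from a bijection $f\colon\alpha\to x$ with $f\in\mathcal{N}_{\mathcal{I}}$ witnessing that $x$ is well-orderable in $\mathcal{N}_{\mathcal{I}}$, and pick $E\in\mathcal{I}$ with $\mathrm{fix}_{\mathcal{G}}(E)\subseteq\mathrm{sym}_{\mathcal{G}}(f)$. For any $\phi\in\mathrm{fix}_{\mathcal{G}}(E)$ one computes $\phi(f)=\{(\phi(\xi),\phi(f(\xi))):\xi<\alpha\}=\{(\xi,\phi(f(\xi))):\xi<\alpha\}$, since $\phi$ fixes each ordinal $\xi$; comparing this with $f=\phi(f)$ as functions on $\alpha$ forces $\phi(f(\xi))=f(\xi)$ for all $\xi<\alpha$, so $\phi$ fixes every element of $x$. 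Hence $\mathrm{fix}_{\mathcal{G}}(E)\subseteq\mathrm{fix}_{\mathcal{G}}(x)$, and since $\mathrm{fix}_{\mathcal{G}}(x)=\bigcap_{y\in x}\mathrm{sym}_{\mathcal{G}}(y)$ is a subgroup of $\mathcal{G}$ containing the basic generator $\mathrm{fix}_{\mathcal{G}}(E)$ of $\mathcal{F}_{\mathcal{I}}$, upward closure of $\mathcal{F}_{\mathcal{I}}$ yields $\mathrm{fix}_{\mathcal{G}}(x)\in\mathcal{F}_{\mathcal{I}}$.

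For the converse --- which also gives the final ``thus'' clause --- I would take $E\in\mathcal{I}$ with $\mathrm{fix}_{\mathcal{G}}(E)\subseteq\mathrm{fix}_{\mathcal{G}}(x)$, pass to the ground model $M\models\mathsf{ZFA}+\mathsf{AC}$, use $\mathsf{AC}$ there to fix a bijection $f\colon\alpha\to x$, and then show $f\in\mathcal{N}_{\mathcal{I}}$ by checking that every member of $\mathsf{TC}(\{f\})$ has a support in $\mathcal{I}$. Concretely, $E$ supports $f$ itself: for $\phi\in\mathrm{fix}_{\mathcal{G}}(E)$ one has $\phi(f)=\{(\xi,\phi(f(\xi))):\xi<\alpha\}=\{(\xi,f(\xi)):\xi<\alpha\}=f$, because ordinals are fixed and each $f(\xi)\in x$ is fixed. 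The same computation shows $E$ supports every Kuratowski pair $(\xi,f(\xi))\in f$ and its components $\{\xi\}$ and $\{\xi,f(\xi)\}$; each ordinal $\xi$ together with all of $\mathsf{TC}(\{\xi\})$ is pure and hence fixed by all of $\mathcal{G}$; and each $f(\xi)$ together with $\mathsf{TC}(\{f(\xi)\})$ already lies in $\mathcal{N}_{\mathcal{I}}$ since $x\in\mathcal{N}_{\mathcal{I}}$, hence has a support in $\mathcal{I}$. Thus $f\in\mathcal{N}_{\mathcal{I}}$ and $x$ is well-orderable there.

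I do not expect a genuine obstacle: the real content is simply that the graph of a ground-model well-ordering of $x$ is frozen by $\mathrm{fix}_{\mathcal{G}}(E)$ once $E$ supports every element of $x$, because ordinals are rigid. The only thing needing a little care is the bookkeeping in the converse --- verifying that \emph{all} of $\mathsf{TC}(\{f\})$, not merely $f$, is symmetric --- together with the minor remark that $\mathrm{fix}_{\mathcal{G}}(x)$ is a subgroup, which is what makes the upward-closure step in the forward direction legitimate.
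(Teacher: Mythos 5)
Your argument is correct and is essentially the standard proof of this fact — the paper itself offers no proof but cites Jech (Equation (4.2), p.~47), and your two directions (rigidity of ordinals forcing $\mathrm{fix}_{\mathcal{G}}(E)\subseteq\mathrm{fix}_{\mathcal{G}}(x)$ from a symmetric bijection $f\colon\alpha\to x$, and conversely checking that a ground-model bijection together with all of $\mathsf{TC}(\{f\})$ is supported by $E$) are exactly that argument. The only minor point worth stating explicitly is that ``well-orderable in $\mathcal{N}_{\mathcal{I}}$'' yields a bijection with an ordinal \emph{inside} the model by transfinite recursion in $\mathsf{ZFA}$, which needs no choice.
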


\begin{lem}\label{Lemma 2.5}
{(see \cite[Proposition 3.5]{BG2023})} {\em Let $A$ be a set of atoms. Let $\mathcal{G}$ be the group of permutations of $A$ such that each $\eta\in \mathcal{G}$ moves only finitely many atoms. Let $\mathcal{N}$ be the permutation model determined by $A$, $\mathcal{G}$, and a normal filter $\mathcal{F}$ of subgroups of $\mathcal{G}$. Then $\mathsf{CS}$ and the Antichain Principle $\mathsf{A}$ hold in $\mathcal{N}$.} 
\end{lem}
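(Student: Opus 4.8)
The plan is to reduce, inside $\mathcal{N}$, an arbitrary poset to a \emph{well-orderable} quotient poset, on which both $\mathsf{CS}$ and $\mathsf{A}$ follow from the ordinary $\mathsf{ZFC}$ arguments; these arguments, restricted to well-orderable posets, go through in $\mathsf{ZF}$ since they only require choices among subsets of the poset, i.e.\ among sets of ordinals.

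Fix $(P,\le)\in\mathcal{N}$ and set $H=sym_{\mathcal{G}}(P)\cap sym_{\mathcal{G}}(\le)$. Since $P,\le\in\mathcal{N}$ we have $H\in\mathcal{F}$, and every $\eta\in H$ is an order-automorphism of $(P,\le)$. The crucial observation is: \emph{any two distinct elements of $P$ in the same $H$-orbit are $\le$-incomparable.} Indeed, suppose $q=\eta(p)$ with $\eta\in H$ and $p\ne q$. As $\eta$ moves only finitely many atoms, $\eta$ has finite order, say $\eta^{k}=\mathrm{id}$; if $p<q=\eta(p)$, then applying the automorphism $\eta$ repeatedly gives $p<\eta(p)<\eta^{2}(p)<\dots<\eta^{k}(p)=p$, contradicting irreflexivity of $<$ (the case $q<p$ is symmetric). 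Hence $P$ is partitioned into $H$-orbits, each of which is an antichain.

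Now form the quotient $\tilde P=\{[p]:p\in P\}$, where $[p]$ is the $H$-orbit of $p$, and define $[p]\preceq[q]$ iff $p'\le q'$ for some $p'\in[p]$ and $q'\in[q]$. Reflexivity is clear; transitivity and antisymmetry follow by translating witnesses back along suitable $\eta\in H$ and invoking that comparable same-orbit elements coincide, so $(\tilde P,\preceq)$ is a poset. Every $g\in H$ fixes each $H$-orbit setwise (because $gH=H$), so $H\subseteq\mathrm{fix}_{\mathcal{G}}(\tilde P)$, whence $\mathrm{fix}_{\mathcal{G}}(\tilde P)\in\mathcal{F}$; consequently a ground-model well-ordering of $\tilde P$ is symmetric and $(\tilde P,\preceq)$ is \emph{well-orderable} in $\mathcal{N}$ (cf.\ Lemma~\ref{Lemma 2.4} and \cite[Chapter~4]{Jec1973}). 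Also, if $(P,\le)$ has no maximal element then neither does $(\tilde P,\preceq)$: given $p$, choose $q>p$; then $q\notin[p]$, so $[p]\prec[q]$. Working inside $\mathcal{N}$, apply to the well-orderable poset $(\tilde P,\preceq)$ the facts that every well-orderable poset has a maximal antichain (a choice-free transfinite recursion along a well-order) and that every well-orderable poset without a maximal element has two disjoint cofinal subsets; this produces a maximal antichain $\mathcal{B}\subseteq\tilde P$ and disjoint cofinal $\mathcal{C}_{0},\mathcal{C}_{1}\subseteq\tilde P$. Finally, set $B=\bigcup\mathcal{B}$ and $C_{i}=\bigcup\mathcal{C}_{i}$, all members of $\mathcal{N}$: a short check (translating witnesses along elements of $H$) shows that $B$ is a maximal antichain of $(P,\le)$ and that $C_{0},C_{1}$ are disjoint cofinal subsets of $(P,\le)$, giving $\mathsf{A}$ and $\mathsf{CS}$ in $\mathcal{N}$.

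I expect the finite-order observation powering the key step to be short; the main work is the bookkeeping around the quotient — verifying that $\preceq$ is a genuine partial order and, above all, that pulling a maximal antichain (respectively a pair of disjoint cofinal sets) back through the orbit map preserves maximality (respectively cofinality) in $(P,\le)$, where one repeatedly uses that each $\eta\in H$ is an automorphism and that comparable same-orbit elements are equal.
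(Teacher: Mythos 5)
Your proof is correct and is essentially the argument behind the cited result (the paper itself only refers to \cite[Proposition 3.5]{BG2023} for this lemma): since every $\eta\in\mathcal{G}$ has finite order, orbit-mates under $H=sym_{\mathcal{G}}(P)\cap sym_{\mathcal{G}}(\le)$ are incomparable, the orbit quotient is well-orderable in $\mathcal{N}$, and maximal antichains, respectively disjoint cofinal sets, are pulled back along the orbit map exactly as you describe. The one external ingredient you invoke without proof --- that $\mathsf{ZF}$ proves $\mathsf{CS}$ for well-orderable posets without maximal elements --- is a known theorem (cf.\ Howard--Saveliev--Tachtsis \cite{HST2016}), so that appeal is legitimate.
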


\section{Positive results and known results}
\begin{fact}\label{Fact 3.1}
{\em 
\begin{enumerate} 
\item $\mathsf{AC}_{\leq n}+\mathsf{RT}^{m}_{n}$ implies $\mathsf{RRT}^{m}_{n}$ in $\mathsf{ZF}$. The proof is due to Galvin (see \cite[Page 951]{Pal2013}). In particular, let $\chi:[X]^m \rightarrow C$ be an $n$-bounded coloring. By $\mathsf{AC}_{\leq n}$, for each $c \in C$, we can fix an enumeration of $\chi^{-1}[c]$ and form the coloring $\chi':[X]^m \rightarrow n$ by letting $\chi'(a) = i$  where $a$ is the $i^{th}$ element in the enumeration of its color class. By $\mathsf{RT}^{m}_{n}$, there exists an infinite subset $Y$ of $X$  which is monochromatic for $\chi'$. Clearly, $Y \subseteq X$ is polychromatic for $\chi$. 

\item $\mathsf{RT}^{m}_{n}$ holds for well-orderable sets in $\mathsf{ZF}$.

\item $\mathsf{RRT}^{m}_{n}$ holds for well-orderable sets in $\mathsf{ZF}$. This follows by (2), and the above-mentioned trick due to Galvin from (1).
Moreover, if $\mathsf{RRT}^{m}_{n}$ holds for an infinite set $Y$, then $\mathsf{RRT}^{m}_{n}$ holds for any set $X \supseteq Y$ in $\mathsf{ZF}$.

\item $\mathsf{RRT}^{m}_{n}$ implies $\mathsf{RRT}^{m}_{k}$  in $\mathsf{ZF}$ if $k<n$ since  any $k$-bounded coloring is also an $n$-bounded coloring.

\item (Palumbo; \cite[Theorem 2.4 and Proposition 2.7]{Pal2013}) $\mathsf{RRT}^{2}_{2}$ holds in the basic Cohen model $\mathcal{M}_{1}$. Moreover, $\mathsf{AC}_{\leq n}+\mathsf{RRT}^{2}_{2}$ implies $\mathsf{RRT}^{2}_{n}$ in $\mathsf{ZF}$.

\item (Blass; \cite[Theorems 1 and 2]{Bla1977}) $\mathsf{RT}^{2}_{2}$ fails in $\mathcal{M}_{1}$ and holds in $\mathcal{N}_{1}$.

\item (Tachtsis; \cite[Theorem 2.4]{Tac2016}) $\mathsf{RT}^{2}_{2}$ holds in $\mathcal{N}_{3}$.

\item (Forster–Truss; \cite[Lemma 2.2, Theorem 2.3]{FT2007}) In $\mathsf{ZF}$, for each fixed $m\in\omega\backslash\{0,1\}$, the statements $\mathsf{RT}^{m}_{n}$ are equivalent for all $n\in\omega\backslash\{0,1\}$. Moreover, if $n_{1} \geq n_{2} \geq 1$ and $k_{1}, k_{2} > 1$, then $\mathsf{RT}^{n_{1}}_{k_{1}}$ implies $\mathsf{RT}^{n_{2}}_{k_{2}}$.

\item (Tachtsis; \cite[Lemma 4]{Tac2018}, \cite[Lemma 3.12]{Tac2021}) In $\mathcal{N}_{1}$ and in $\mathcal{N}_{3}$, any non-well-orderable family of non-empty sets has a non-well-orderable subfamily with a choice function.
\item (Howard–Solski; \cite[Corollary 2.5]{HS1993})
$\Delta SL$ is equivalent to $\mathsf{CUT} + \mathsf{PC}$ in $\mathsf{ZF}$.

\item If $\mathsf{CRT}$ holds for an infinite set $Y$, then $\mathsf{CRT}$ holds for any set $X \supseteq Y$ in $\mathsf{ZF}$.
\item (Banerjee--Gopaulsingh; \cite[Proposition 3.3 (4)]{BG2023}) $\mathsf{EDM}$ restricted to graphs based on a well-ordered set of vertices holds in any permutation model.  
\end{enumerate}
}
\end{fact}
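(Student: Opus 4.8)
The plan is to reduce everything to the \emph{kernel} of the ground model of $\mathsf{ZFA}+\mathsf{AC}$ --- the class of sets whose transitive closure contains no atoms --- which is a transitive model of $\mathsf{ZFC}$ in which the classical Erd\H{o}s--Dushnik--Miller partition relation $\kappa\to(\kappa,\omega)^2$ holds, and then to pull the homogeneous set it produces back into the permutation model. The only real work is organising the interplay between the permutation model $\mathcal{N}$, the ground model $M$, and the kernel, and dealing with the fact that the vertex set may itself consist of atoms.

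In detail, fix a permutation model $\mathcal{N}$ built (as in Section~2) inside $M\models\mathsf{ZFA}+\mathsf{AC}$ with set of atoms $A$, and put $\mathsf{K}=\{x\in M:\mathsf{TC}(\{x\})\cap A=\emptyset\}$. First I would record the two standard structural facts (cf. \cite[Chapter~4]{Jec1973}): $\mathsf{K}\models\mathsf{ZFC}$, since $M\models\mathsf{ZFA}+\mathsf{AC}$; and $\mathsf{K}\subseteq\mathcal{N}$, since each $\pi\in\mathcal{G}$ fixes every element of $\mathsf{K}$ (by $\in$-induction on rank), so elements of $\mathsf{K}$ are hereditarily symmetric. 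I would also note that $\mathsf{K}$, $\mathcal{N}$ and $M$ have the same ordinals and the same cardinals, because a bijection between two ordinals is a pure set and therefore belongs to all three; in particular ``uncountable'' is absolute among them.

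Next, given $G=(V_G,E_G)\in\mathcal{N}$ with $V_G$ uncountable and well-orderable in $\mathcal{N}$, and a colouring $f:[V_G]^2\to\{0,1\}$ in $\mathcal{N}$, I would fix in $\mathcal{N}$ a bijection $g:\kappa\to V_G$ with $\kappa=|V_G|$; by the absoluteness above, $\kappa\ge\aleph_1$. Define $f':[\kappa]^2\to\{0,1\}$ by $f'(\{\alpha,\beta\})=f(\{g(\alpha),g(\beta)\})$. The transitive closure of $\{f'\}$ contains only $f'$, ordinals, finite sets of ordinals, and ordered pairs of these, so it is disjoint from $A$ and $f'\in\mathsf{K}$. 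Applying $\kappa\to(\kappa,\omega)^2$ inside $\mathsf{K}$ yields $H'\in\mathsf{K}$ with $H'\subseteq\kappa$, which is either $0$-monochromatic for $f'$ with $|H'|=\kappa$, or $1$-monochromatic for $f'$ with $|H'|=\omega$. Since $\mathsf{K}\subseteq\mathcal{N}$ we get $H'\in\mathcal{N}$, hence $H:=g[H']\in\mathcal{N}$ is a subset of $V_G$ of the same cardinality, monochromatic in the same colour for $f$: in the first case $H$ is uncountable and $0$-monochromatic, in the second $H$ is countably infinite and $1$-monochromatic. This is exactly the conclusion of $\mathsf{EDM}$ for $(G,f)$.

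The main obstacle --- in fact essentially the only subtlety --- is that $V_G$ need not be a pure set: in the models of interest ($\mathcal{N}_1$, $\mathcal{N}_3$, and so on) $V_G$ will typically be a set of atoms, so $f$ itself lies outside $\mathsf{K}$ and the $\mathsf{ZFC}$ theorem cannot be invoked for $f$ directly. Transporting the colouring along the well-ordering $g$ to a colouring of an ordinal ``launders'' it into the kernel; this is precisely where the hypothesis ``$V_G$ well-orderable'' is used, and it is why no choice is needed inside $\mathcal{N}$. Everything else is bookkeeping: one must only check that the witness obtained in $\mathsf{K}$ is visible in $\mathcal{N}$ (immediate from $\mathsf{K}\subseteq\mathcal{N}$), and that cardinalities computed in $\mathsf{K}$ agree with those computed in $\mathcal{N}$ (absoluteness of cardinals).
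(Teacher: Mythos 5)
Your proposal addresses only item (12) of the Fact: $\mathsf{EDM}$ restricted to graphs on a well-orderable vertex set in an arbitrary permutation model. The statement you were asked to prove is the entire twelve-item Fact \ref{Fact 3.1}, and your write-up says nothing about items (1)--(11). Several of those are not mere citations but carry actual content that the paper supplies in-line: Galvin's trick that $\mathsf{AC}_{\leq n}+\mathsf{RT}^{m}_{n}$ yields $\mathsf{RRT}^{m}_{n}$ (item (1)), the $\mathsf{ZF}$-provability of $\mathsf{RT}^{m}_{n}$ and hence $\mathsf{RRT}^{m}_{n}$ for well-orderable sets together with the upward transfer ``$\mathsf{RRT}^{m}_{n}$ for $Y$ implies $\mathsf{RRT}^{m}_{n}$ for any $X\supseteq Y$'' (items (2),(3)), the monotonicity in the boundedness parameter (item (4)), and the analogous upward transfer for $\mathsf{CRT}$ (item (11)). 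Omitting all of these is a genuine gap: as a proof of Fact \ref{Fact 3.1} the proposal is incomplete, since items (5)--(10) and (12) are the ones the paper disposes of by citation (Palumbo, Blass, Tachtsis, Forster--Truss, Howard--Solski, Banerjee--Gopaulsingh), whereas the items you skipped are exactly the ones requiring an argument here.

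Concerning the one item you do treat: your kernel-transfer argument for (12) is essentially correct and is a reasonable self-contained alternative to the paper's citation of Banerjee--Gopaulsingh. The key points all check out: the kernel $\mathsf{K}$ of $M\models\mathsf{ZFA}+\mathsf{AC}$ is a transitive model of $\mathsf{ZFC}$ contained in $\mathcal{N}$ (pure sets are fixed by every permutation of the atoms, hence hereditarily symmetric); bijections between ordinals are pure sets, so well-ordered cardinalities agree in $\mathsf{K}$, $\mathcal{N}$, $M$; the transported colouring $f'$ of $[\kappa]^2$ is pure, so the Dushnik--Miller relation $\kappa\to(\kappa,\omega)^2$ (a $\mathsf{ZFC}$ theorem for all infinite $\kappa$, singular included) can be applied in $\mathsf{K}$, and the homogeneous set pulls back along $g$ into $\mathcal{N}$ with the correct cardinality. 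Note, though, that this route is specific to permutation models of $\mathsf{ZFA}$ and does not by itself establish the other items, nor does it replace the $\mathsf{ZF}$ arguments (Ramsey's theorem for well-ordered sets, Galvin's re-colouring) on which the rest of the paper repeatedly relies.
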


\subsection{Positive results}

\begin{prop}\label{Proposition 3.2}{($\mathsf{ZF}$)}
{\em Fix $m,n\in\omega\backslash\{0,1\}$. The following hold:
\begin{enumerate}
    \item $\mathsf{DF=F}$ implies $\mathsf{CRT}$.
    \item $\mathsf{DF=F}$ implies $\mathsf{RRT}^{m}_{n}$.
    \item $\mathsf{CRT}$ implies $\mathsf{RRT}^{2}_{n}$ and $\mathsf{RT}^{2}_{n}$.
\end{enumerate}    
}
\end{prop}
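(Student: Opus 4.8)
The plan is to treat (1) and (2) uniformly by a reduction to the well-orderable case, and to treat (3) by applying $\mathsf{CRT}$ once and running a four-way case analysis on its outcome.

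For (1) and (2), note that $\mathsf{DF=F}$ says precisely that every infinite set is Dedekind-infinite, hence that every infinite $X$ contains a countably infinite — in particular well-orderable — subset $W$. Given a coloring of $[X]^2$ (resp.\ an $n$-bounded coloring of $[X]^m$), I would restrict it to $[W]^m$ and invoke the $\mathsf{ZF}$-provable version for well-orderable sets: for $\mathsf{RRT}^m_n$ this is Fact~\ref{Fact 3.1}(3), and for $\mathsf{CRT}$ it is the classical Erd\H{o}s--Rado canonical Ramsey theorem for $\omega$, whose proof (color $[\,\omega\,]^4$ by the finite "equality pattern" of the six induced colors of the pairs of a $4$-set, apply $\mathsf{RT}^4_r$ for a suitable finite $r$ — valid in $\mathsf{ZF}$ for well-orderable sets by Fact~\ref{Fact 3.1}(2) — and read off one of the four canonical types on the resulting homogeneous set) carries over verbatim to $\mathsf{ZF}$. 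This yields an infinite $Y\subseteq W\subseteq X$ witnessing the conclusion; equivalently, one first obtains $\mathsf{RRT}^m_n$ (resp.\ $\mathsf{CRT}$) for $W$ and transfers it to $X\supseteq W$ via the last sentence of Fact~\ref{Fact 3.1}(3) (resp.\ Fact~\ref{Fact 3.1}(11)).

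For (3), fix an infinite $X$ and apply $\mathsf{CRT}$ to the coloring at hand, obtaining an infinite $Y\subseteq X$, a linear order $R$ on $Y$, and one of the four alternatives $(a)$--$(d)$. For $\mathsf{RT}^2_n$ the coloring is $\chi\colon[X]^2\to n$ with $n$ finite: in $(b)$ the map sending a non-$R$-maximal $a\in Y$ to the (well-defined, by $(b)$ with $a_1=a_2=a$) common color of the pairs $\{a,b\}$ with $aRb$ is injective, and $Y$ being infinite has infinitely many non-$R$-maximal elements, contradicting $|C|=n$; alternative $(c)$ is symmetric, and $(d)$ gives an injection of the infinite set $[Y]^2$ into $n$ — again absurd; hence $(a)$ holds and $Y$ is monochromatic. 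For $\mathsf{RRT}^2_n$ the coloring $\chi$ is $n$-bounded: $(a)$ would put a single color on all of the infinite set $[Y]^2$, contradicting $n$-boundedness; in $(d)$, $Y$ is by definition polychromatic, so we are done; and in $(b)$ and $(c)$ I would contradict $n$-boundedness, forcing $(d)$.

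The only genuine obstacle is that last step — ruling out $(b)$ and $(c)$ under $n$-boundedness. In case $(b)$, each non-$R$-maximal $a$ gives a color $c_a$ used by every pair $\{a,b\}$ with $aRb$, hence used at least $|\{b\in Y: aRb\}|$ times. Here I would invoke the elementary fact that in any infinite linear order $(Y,R)$ either some element has an infinite $R$-up-set — in which case $c_a$ occurs infinitely often, contradiction — or every $R$-up-set is finite, in which case choosing $n+2$ elements and relabelling them so that $b_0\,R\,b_1\,R\cdots R\,b_{n+1}$ shows the up-set sizes are unbounded, so some $c_a$ occurs more than $n$ times, again contradicting $n$-boundedness. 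Case $(c)$ is identical with $R$-down-sets in place of $R$-up-sets. The remaining verifications are routine bookkeeping.
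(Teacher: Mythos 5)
Your proposal is correct and follows essentially the same route as the paper: for (1) and (2) you reduce via $\mathsf{DF=F}$ to a countably infinite subset, invoke the well-orderable cases (Fact~\ref{Fact 3.1}(2),(3), with the Erd\H{o}s--Rado/Graham--Rothschild--Spencer equality-pattern coloring of $4$-sets and $\mathsf{RT}^{4}_{r}$ for $\mathsf{CRT}$), and transfer upward exactly as in Fact~\ref{Fact 3.1}(3),(11), which is the paper's argument verbatim. For (3) the paper only says ``straightforward,'' and your four-case analysis (ruling out $(b)$--$(d)$ for finitely many colors, and $(a)$--$(c)$ for an $n$-bounded coloring by exhibiting $n+1$ pairs with a common first or second element) is a correct filling-in of that claim.
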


\begin{proof}
(1). Let $X$ be an infinite set, and $\chi:[X]^{2}\rightarrow C$ be a coloring. Let $X'$ be a countably infinite subset of $X$ by $\mathsf{DF=F}$. Fix a well-ordering $<_{X'}$ of $X'$. We show that $\mathsf{CRT}$ holds for $X'$.
Consider the coloring $\chi':[X']^{4}\rightarrow 203$ such that 
if $\{x_{1},x_{2},x_{3},x_{4}\}\in [X']^{4}$ with $x_{1}<_{X'}x_{2}<_{X'}x_{3}<_{X'}x_{4}$, then $\chi'(\{x_{1},x_{2},x_{3},x_{4}\})\in \{0,1,...,202\}$ depending on the 203 different possibilities of equalities on the values of $\chi$ on $[\{x_{1},x_{2},x_{3},x_{4}\}]^{2}$ as described in Graham--Rothschild--Spencer \cite[Proof of Theorem 2 in section 5.5, page 129]{GRS1980}.
Since $\mathsf{RT}^{4}_{203}$ holds for well-orderable sets in $\mathsf{ZF}$ by Fact \ref{Fact 3.1}(2), we can obtain a countably infinite set $Y'\subseteq X'$ which is monochromatic for $\chi'$. Define $R=<_{X'}\restriction Y'$.
We can follow the arguments of  \cite[Proof of Theorem 2 in section 5.5]{GRS1980} which works without invoking any form of choice after the use of $\mathsf{RT}^{4}_{203}$, to show that $Y'$ and $R$
satisfies either of ($(a)$-$(d)$), i.e., the consequences mentioned in the definition of $\mathsf{CRT}$ (see Definition 2.2(3)). Since $X'\subseteq X$, the conclusion follows by Fact 
 \ref{Fact 3.1}(11).

(2). Let $\chi:[X]^{m}\rightarrow C$ be an $n$-bounded coloring. By $\mathsf{DF=F}$, $X$ has a countably infinite subset, say $X'$. 
By Fact \ref{Fact 3.1}(3), there exists an infinite subset $Y'$ of $X'$ which is polychromatic for $\chi\restriction [X']^{m}$ in $\mathsf{ZF}$. Thus, $Y'$ is an infinite subset of $X$ which is polychromatic for $\chi$. 

(3). This is straightforward. 
\end{proof}

\begin{prop}\label{Proposition 3.3}{($\mathsf{ZF}$)}
{\em Fix $m,n\in\omega\backslash\{0,1\}$ and $k\in \omega\backslash\{0\}$. The following hold:
\begin{enumerate}
    \item $\mathsf{RRT}^{m}_{n}$ implies $\mathsf{AC}_{2}^{-}$.
    \item $\mathsf{RRT}^{2k+1}_{r}$ implies $\mathsf{AC}_{n}^{-}$ for all $r\geq {n \choose 2}^{k} n (k+1)$.
    \item $\mathsf{RRT}^{2k}_{r}$ implies $\mathsf{AC}_{n}^{-}$ for all $r\geq {n \choose 2}^{k-1} n^{2} {k+1 \choose 2}$.
    \item $\mathsf{RRT}^{2}_{3}$ implies $\mathsf{AC}_{3}^{-}$.

\end{enumerate}
}
\end{prop}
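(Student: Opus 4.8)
All four clauses follow one recipe: given the relevant infinite family $\mathcal{A}$ --- of $2$-element sets for (1), of $n$-element sets for (2) and (3), of $3$-element sets for (4) --- I would attach to it a bounded coloring whose infinite polychromatic sets are forced to be nearly transversals. First comes the usual choice-free reduction: replacing $A\in\mathcal{A}$ by $A\times\{A\}$ one may assume $\mathcal{A}=\{A_i:i\in I\}$ is pairwise disjoint; set $X=\bigcup_{i\in I}A_i$ and let $\pi\colon X\to\mathcal{A}$ send $x$ to the unique member of $\mathcal{A}$ containing it. The common ``engine'' is: if $Y\subseteq X$ is infinite and only finitely many $i$ are \emph{heavy} (meaning $|Y\cap A_i|\geq 2$), then --- $Y$ being infinite and each $A_i$ finite --- $\mathcal{B}=\{A_i:|Y\cap A_i|=1\}$ is infinite and $A_i\mapsto$ the unique point of $Y\cap A_i$ is a choice function on $\mathcal{B}$. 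Thus for (1), (2), (3) it is enough to exhibit a coloring of the stated boundedness under which every infinite polychromatic $Y$ has only finitely many heavy indices; for (4), since $n=3$ a heavy index $i$ has $|Y\cap A_i|=2$ and still pins down the point $A_i\setminus Y$, so there it suffices to force $|Y\cap A_i|\leq 2$ for every $i$.

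For (1) I would invoke $\mathsf{RRT}^{m}_{2}$, which follows from $\mathsf{RRT}^{m}_{n}$ by Fact \ref{Fact 3.1}(4), and use that a $2$-element set carries the canonical fixed-point-free involution $\sigma\colon X\to X$, $\sigma(x)=$ the other element of $\pi(x)$. Writing $m=2k$ or $m=2k+1$ with $k\geq 1$, color $S\in[X]^{m}$ by the unordered pair $\{S,\sigma[S]\}$: this is well-defined, has infinite range, and is $2$-bounded since $\chi^{-1}[\{S,\sigma[S]\}]\subseteq\{S,\sigma[S]\}$. If $Y$ were polychromatic with $k+1$ distinct indices $i_1,\dots,i_{k+1}$ such that $A_{i_l}\subseteq Y$, then, taking $S=A_{i_1}\cup\dots\cup A_{i_{k-1}}$ together with one point of $A_{i_k}$ and one of $A_{i_{k+1}}$ (even case), or $S=A_{i_1}\cup\dots\cup A_{i_k}$ together with one point of $A_{i_{k+1}}$ (odd case), the set $S'=\sigma[S]$ again lies in $[Y]^{m}$, differs from $S$ (since $\sigma$ moves that last chosen point), and has $\{S,\sigma[S]\}=\{S',\sigma[S']\}$ --- contradicting polychromaticity. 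So $Y$ has at most $k$ heavy indices, and the engine gives $\mathsf{AC}_2^{-}$.

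For (2) and (3) the color of $S$ will record nothing but its support $\mathrm{supp}(S)=\{i:S\cap A_i\neq\emptyset\}$, and only on one ``shape''. For $m=2k+1$: color $S$ by $\mathrm{supp}(S)$ when the intersection sizes $(|S\cap A_i|)_{i\in\mathrm{supp}(S)}$ consist of $k$ twos and a single $1$ (so $|\mathrm{supp}(S)|=k+1$), and give every other $S$ a color used by it alone. The color class of such a support $\{i_1,\dots,i_{k+1}\}$ has size exactly $(k+1)\cdot n\cdot\binom{n}{2}^{k}$ --- choose which of the $k+1$ indices is light, its single point, and a $2$-subset at each of the $k$ heavy indices --- so $\chi$ is $\binom{n}{2}^{k}n(k+1)$-bounded with infinite range, and $\mathsf{RRT}^{2k+1}_{\binom{n}{2}^{k}n(k+1)}$ applies. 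If a polychromatic $Y$ had $k+1$ indices $i_1,\dots,i_{k+1}$ with $|Y\cap A_{i_l}|\geq 2$, then choosing a $2$-subset of $Y\cap A_{i_l}$ for $l\leq k$ and two different points of $Y\cap A_{i_{k+1}}$ yields two distinct $S,S'\in[Y]^{m}$ of this shape with $\mathrm{supp}(S)=\mathrm{supp}(S')$, hence the same color --- a contradiction; so $Y$ has at most $k$ heavy indices and the engine gives $\mathsf{AC}_n^{-}$. For $m=2k$ I would run the same argument with the shape ``$k-1$ twos and two ones'' (again $|\mathrm{supp}(S)|=k+1$), whose support classes have size $\binom{k+1}{2}\cdot n^{2}\cdot\binom{n}{2}^{k-1}$. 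In both cases Fact \ref{Fact 3.1}(4) then spreads the implication from the exact threshold to every larger $r$.

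Finally, (4) is the instance $m=2$, $n=3$: color $\{x,y\}\in[X]^{2}$ by $\pi(x)$ when $\{x,y\}\subseteq A_i$ for some $i$, and by a color of its own otherwise. Since $|[A_i]^{2}|=3$ this is $3$-bounded, so $\mathsf{RRT}^{2}_{3}$ produces an infinite polychromatic $Y$; no $A_i$ can have $|Y\cap A_i|=3$ (its three pairs would all receive the color $A_i$), so $|Y\cap A_i|\leq 2$ for all $i$, and $A_i\mapsto$ the point of $Y\cap A_i$ when $|Y\cap A_i|=1$ and the unique point of $A_i\setminus Y$ when $|Y\cap A_i|=2$ is a choice function on the infinite subfamily $\{A_i:Y\cap A_i\neq\emptyset\}$. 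The genuinely delicate points, I expect, are pinning the color-class sizes to exactly the prescribed constants --- which is why a single shape is colored by support while everything else is colored injectively --- and verifying shape-by-shape that $k+1$ heavy indices really do force a collision inside $Y$; the disjointification and the passage from ``$Y$ infinite with finitely many heavy indices'' to a partial choice function are routine.
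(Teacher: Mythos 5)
Your proposal is correct and follows essentially the same route as the paper: disjointify the family, attach a coloring of exactly the stated boundedness that identifies all sets of one prescribed ``shape'' having the same support (your global involution coloring $S\mapsto\{S,\sigma[S]\}$ in (1) is just the paper's identification of a transversal with its coordinate-flipped partner, extended to all of $[X]^{m}$), and conclude that an infinite polychromatic set has only finitely many heavy indices, from which the partial choice function is read off. The only immaterial difference is presentational: you extract the choice function directly from the polychromatic set, whereas the paper frames the argument as a contradiction with the assumption that no partial choice function exists.
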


\begin{proof}

Let $T(n,k)={n \choose 2}^{k} n (k+1)$, and $R(n,k)={n \choose 2}^{k-1} n^{2} {k+1 \choose 2}$. In view of Fact \ref{Fact 3.1}(4), it is enough to show that $\mathsf{RRT}^{m}_{2}$ implies $\mathsf{AC_{2}^{-}}$ in (1),  $\mathsf{RRT}^{2k+1}_{T(n,k)}$ implies $\mathsf{AC}_{n}^{-}$ in (2), and $\mathsf{RRT}^{2k}_{R(n,k)}$ implies $\mathsf{AC}_{n}^{-}$ in (3).

(1) Let, $\mathcal{A}=\{A_{i}: i\in I\}$ be a collection of $2$-element sets $A_{i}=\{a_{i_{1}},a_{i_{2}}\}$ without a partial choice function and let $A=\bigcup_{i\in I}A_{i}$. 

\begin{claim}\label{claim:2_bound}
{\em There exists a $2$-bounded coloring $f: [A]^{m}\rightarrow C$ for some color set $C$ such that
for all distinct $p_{1},p_{2},...,p_{m}\in I$ and all $1\leq q_{1},...,q_{m}\leq 2$ we have}
$$f(\{a_{p_{1}q_{1}},...,a_{p_{m}q_{m}}\})=f(\{a_{p_{1}(3-q_{1})},...,a_{p_{m}(3-q_{m})}\}).$$
\end{claim}
\begin{proof}
Define $K=\{a\in [A]^m: (\exists P\in [I]^m)(\forall i\in P)|a\cap A_i|= 1\}$. 
 Introduce the equivalence relation $\sim$ on $K$ as
 $$a\sim b \iff a=b \text{ or } (\exists P\in [I]^m) (\forall i\in P) a\cap b\cap A_i =\emptyset \text{ and } a\cap A_i\neq \emptyset \text{ and } b\cap A_i\neq \emptyset.$$
 Let $C=\{a/\!\sim: a\in K\}\cup ([A]^m\setminus K)$ be the color set, where $a/\!\sim$ is the equivalence class of $a\in K$.  Let $f: [A]^m \to C$ be a mapping such that
 \begin{align*}
     f(a) =\begin{cases} a & \text{ if } a\not\in K\\
     a/\!\sim & \text{ otherwise.}
     \end{cases}
 \end{align*}
 Clearly, $f$ is a $2$-bounded coloring with the required property.
\end{proof}

Let $f: [A]^{m}\rightarrow C$ be a $2$-bounded coloring as in claim \ref{claim:2_bound}. By $\mathsf{RRT}^{m}_{2}$, there exists an infinite subset $B$ of $A$ which is polychromatic for $f$.
Let $M_{m}=\{i\in I: \vert A_{i} \cap B\vert =m\}$ for $1\leq m\leq 2$. Clearly, $M_{1}$ is finite. Otherwise, 
$\{A_{i} \cap B : i\in M_{1}\}$ will determine a partial choice function for $\mathcal{A}$. Thus $M_2$ has to be infinite, hence there exists $P\in [M_2]^{m}$ such that $\vert A_{p}\cap B\vert\geq 2$ for each $p\in P$, which contradicts the fact that $B$ is polychromatic for $f$.

(2) Let $\mathcal{A}=\{A_{i}: i\in I\}$ be a collection of $n$-element sets $A_{i}=\{a_{i_{1}},...,a_{i_{n}}\}$ without a partial choice function and let $A=\bigcup_{i\in I}A_{i}$. 
For all $P\in [I]^{k+1}$, let 
\begin{align}
    \begin{split}
X_{P}=\{\{B_{p}:p\in P\}: &(\forall p\in P) (B_{p}\in [A_{p}]^{2} \lor B_{p}\in [A_{p}]^1)\\ 
& \land (\exists S\in [P]^{k})((\forall p\in S) (B_{p}\in [A_{p}]^{2}) \land (\forall p\in P\backslash S) (B_{p}\in [A_{p}]^1))\}\text{\footnotemark}
\end{split} 
\end{align}\footnotetext{i.e. if $P=\{p_{1},...,p_{k+1}\}$, then $X_P$ is the set of all possible $k+1$ element sets $\{B_{p_1},\dots, B_{p_{k+1}}\}$ such that exactly one element (say $B_{p_{i}}$) is in $[A_{p_{i}}]^1$ and  all other elements  $B_{p_{j}}$ is in $ [A_{p_{j}}]^2$.}Then for any $P\in [I]^{k+1}$, we have 
$\vert X_{P}\vert$=$T(n,k)$.
Similarly to claim \ref{claim:2_bound} we can construct a $T(n,k)$-bounded coloring  $f: [A]^{2k+1}\rightarrow C$  such that 
for all $P\in [I]^{k+1}$, and all $\{D_{p}:p\in P\},\{E_{p}:p\in P\}\in X_{P}$, we have
$f(\bigcup_{p\in P} D_{p})=f(\bigcup_{p\in P} E_{p})$. 
By $\mathsf{RRT}^{2k+1}_{T(n,k)}$, there exists an infinite subset $B$ of $A$ which is polychromatic for $f$.
Let $M_{m}=\{i\in I: \vert A_{i} \cap B\vert =m\}$ for each $1\leq m\leq n$. 
Clearly, $M_{1}$ is finite as in (1). Since, $B=\bigcup_{1\leq m\leq n} (\cup_{i\in M_{m}} A_{i}\cap B)$ is infinite, and $M_{1}$ is finite,  we have that $\cup_{1<p\leq n} M_{p}$ is infinite since the finite union of finite sets is finite in $\mathsf{ZF}$. 
Thus there exists $P\in [I]^{k+1}$ such that $\vert A_{p}\cap B\vert\geq 2$ for each $p\in P$, which contradicts the fact that $B$ is polychromatic for $f$.

(3) 
Assume $\mathcal{A}$ and $A$ as in (2). 
For all $P\in [I]^{k+1}$, let 
\begin{equation}
\begin{split}
X_{P}=\{\{B_{p}:p\in P\}: &(\forall p\in P) (B_{p}\in [A_{p}]^{2} \lor B_{p}\in [A_{p}]^1)\\ 
& \land (\exists S\in [P]^{k-1})((\forall p\in S) (B_{p}\in [A_{p}]^{2}) \land (\forall p\in P\backslash S) (B_{p}\in [A_{p}]^1))\}.
\end{split}    
\end{equation}
Then, 
$\vert X_{P}\vert$=$R(n,k)$ for any $P\in [I]^{k+1}$.
Similarly to claim \ref{claim:2_bound} we can construct an $R(n,k)$-bounded coloring $f: [A]^{2k}\rightarrow C$ such that 
for all $P\in [I]^{k+1}$, and all $\{D_{p}:p\in P\},\{E_{p}:p\in P\}\in X_{P}$, we have
$f(\bigcup_{p\in P} D_{p})=f(\bigcup_{p\in P} E_{p})$. 
The rest follows by applying $\mathsf{RRT}^{2k}_{R(n,k)}$, and by following the arguments of (2).

(4) Assume $\mathcal{A}$ and $A$ as in (2) by replacing $n$ by $3$. Similarly to claim \ref{claim:2_bound} we can construct a $3$-bounded coloring $f: [A]^{2}\rightarrow C$ such that 
$f(\{a_{pq_{1}},a_{pq_{2}}\})=f(\{a_{pr_{1}},a_{pr_{2}}\})$     
for all $p\in I$ and all $1\leq q_{1},q_{2}, r_{1},r_{2}\leq 2$ where $q_{1}\neq q_{2}$ and $r_{1}\neq r_{2}$. By $\mathsf{RRT}^{2}_{3}$, there exists an infinite subset $B$ of $A$ which is polychromatic for $f$. Assume $M_{m}$ for each $1\leq m\leq 3$ as in (1). Clearly, $M_{1}\cup M_{2}$ is finite. Otherwise, 
$\{A_{i} \cap B : i\in M_{1}\}\cup\{A_{i}\backslash (A_{i} \cap B) : i\in M_{2}\}$
determines a partial choice function for $\mathcal{A}$. So, there exists $p\in I$ such that $\vert A_{p}\cap B\vert= 3$, 
which contradicts the fact that $B$ is polychromatic for $f$. 
\end{proof}

\begin{defn}\label{Definition 3.4}
We denote the length of a finite sequence $x$ as $\vert x\vert$ and its elements as
$x=\{x^{i}\}^{\vert x\vert}_{i=1}$. 
\end{defn}

\begin{thm}\label{Theorem 3.5}
{($\mathsf{ZF}$)}
{\em The following are equivalent:
\begin{enumerate}
\item $\mathsf{DC}$
\item If $G=(V_{G}, E_{G})$ is an infinite graph, then for all coloring $f:[V_{G}]^{2}\rightarrow \{0,1\}$ if all 
$0$-monochromatic sets are finite, then there is a maximal $0$-monochromatic set.
\item If all independent sets of an infinite graph $G$ are finite, then there exists a maximal independent set of $G$.
\end{enumerate}
}
\end{thm}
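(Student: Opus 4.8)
The plan is to prove the cycle of implications $(1)\Rightarrow(2)\Rightarrow(3)\Rightarrow(1)$. The implication $(2)\Rightarrow(3)$ is immediate: a set of vertices is independent in $G=(V_G,E_G)$ precisely when it is $0$-monochromatic for the coloring $f:[V_G]^2\to\{0,1\}$ defined by $f(e)=0$ iff $e\notin E_G$, so the hypothesis ``all independent sets are finite'' is exactly ``all $0$-monochromatic sets are finite'', and a maximal $0$-monochromatic set is exactly a maximal independent set. Conversely $(3)\Rightarrow(2)$ by the same translation, so in fact $(2)$ and $(3)$ are trivially equivalent and it suffices to handle $(1)\Rightarrow(2)$ and $(3)\Rightarrow(1)$ (or, symmetrically, $(1)\Rightarrow(3)$ and $(2)\Rightarrow(1)$).

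For $(1)\Rightarrow(2)$, assume $\mathsf{DC}$ and let $f:[V_G]^2\to\{0,1\}$ have all $0$-monochromatic sets finite. I would build a maximal $0$-monochromatic set by a greedy recursion justified by $\mathsf{DC}$. Let $S$ be the set of all finite $0$-monochromatic subsets of $V_G$ (a nonempty set since $\emptyset\in S$), and define a relation $R$ on $S$ by $x\mathrel{R}y$ iff either $x$ is already maximal (i.e.\ no vertex can be added to $x$ keeping it $0$-monochromatic) and $y=x$, or $x\subsetneq y$ with $|y|=|x|+1$ and $y$ still $0$-monochromatic. This $R$ is entire on $S$, so $\mathsf{DC}$ gives a sequence $(x_k)_{k\in\omega}$ with $x_k\mathrel{R}x_{k+1}$. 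If some $x_k$ is maximal we are done; otherwise the sequence is strictly increasing, and $\bigcup_k x_k$ is an infinite $0$-monochromatic set, contradicting the hypothesis. One subtlety to check is that every finite $0$-monochromatic set that is not maximal has a strict one-element extension — this is immediate from the definition of maximality — and that ``$\bigcup_k x_k$ is $0$-monochromatic'' follows because any pair from the union already lies in some $x_k$.

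For the reversal $(3)\Rightarrow(1)$ (equivalently $(2)\Rightarrow(1)$), I would start from an arbitrary nonempty set $T$ with an entire relation $Q$ on it and manufacture a graph whose maximal independent sets code $Q$-sequences. A standard device: let the vertex set be $V_G=\{(s,n): n\in\omega,\ s \text{ is a finite sequence } (s_0,\dots,s_n)\in T^{n+1} \text{ with } s_i\mathrel{Q}s_{i+1} \text{ for } i<n\}$, i.e.\ vertices are finite $Q$-paths tagged by their length, and put an edge between two such vertices unless one is a one-step extension of the other. Then independent sets are exactly chains of $Q$-paths under end-extension, every such independent set is finite or gives an infinite $Q$-branch, and one must argue that \emph{all} independent sets are finite — this forces a little care, since a priori $Q$ could have infinite branches in $V$, so I would instead set things up (e.g.\ using a tree of \emph{attempts} and pruning, or restricting to paths that cannot be extended inside the current finite set) so that the ``no infinite independent set'' hypothesis of $(3)$ is genuinely available; alternatively, and more robustly, one shows the contrapositive — if $\mathsf{DC}$ fails there is $(T,Q)$ with no infinite $Q$-sequence, and from it builds a graph with no maximal independent set, directly negating $(3)$.

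The main obstacle I expect is exactly this last point: arranging the graph in $(3)\Rightarrow(1)$ so that the side condition ``all independent sets are finite'' is met (when $\mathsf{DC}$ fails and hence no infinite $Q$-branch exists), while simultaneously ensuring that a maximal independent set would yield a full $\omega$-indexed $Q$-sequence — the tension being that ``maximal'' wants to be long but ``finite'' wants to be short. The resolution is to work with the witness $(S,R)$ to the failure of $\mathsf{DC}$: take $S$ closed under the relevant initial-segment structure, build $V_G$ from finite $R$-chains with edges encoding ``incomparable or non-adjacent in the chain order'', observe every independent set is a finite $R$-chain (finite because an infinite one would be an infinite $R$-sequence), and check that no such chain is maximal because $R$ being entire lets us extend any finite chain by one more step. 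Hence no maximal independent set exists, contradicting $(3)$; this proves $\mathsf{DC}$. Throughout, the only choice principle invoked is $\mathsf{DC}$ itself (in the forward direction) and pure $\mathsf{ZF}$ bookkeeping (in the reversal), so the equivalence is established in $\mathsf{ZF}$.
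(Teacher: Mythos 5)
Your proposal is correct and follows essentially the same route as the paper: the $(1)\Rightarrow(2)/(3)$ direction is the same greedy one-element-extension argument via $\mathsf{DC}$ on finite $0$-monochromatic (independent) sets, and the reversal uses the same graph built from finite sequences along the entire relation $R$, with independence meaning end-extension comparability, so that an infinite independent set yields an infinite $R$-sequence while entirety of $R$ (plus transitivity of end-extension) shows no finite independent set is maximal; you merely organize the reversal contrapositively and restrict the vertices to $R$-paths, whereas the paper argues directly with all finite sequences and its relation $\overline{R}$. Just make sure the edge relation in your final construction is exactly ``incomparable under end-extension'': your discarded first sketch, where non-edges are only one-step extensions, really does fail, since there a pair $\{x, x^{\frown}s\}$ is already a maximal independent set and clause $(3)$ gives no information.
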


\begin{proof}(2$\Leftrightarrow$3) is straightforward.

$(1\Rightarrow 3)$ Assume $\mathsf{DC}$. Suppose there are no maximal independent sets in $G$. We will show that there exists an infinite independent set in $G$. Let
$S^{ind}_{fin}$ be a family of all
finite independent sets of $G$. Clearly, $S^{ind}_{fin}\neq \emptyset$ as $\{v\}\in S^{ind}_{fin}$ if $v\in V_{G}$. 
Define a relation
$R$ on $S^{ind}_{fin}$ as follows:
    $xRy$ $\Leftrightarrow \vert y \vert = \vert x\vert + 1$ and $x\subseteq y$. 
The relation $R$ is entire. Fix
$x\in S^{ind}_{fin}$. Since $x$ is not maximal, there exists $v\in V_{G}\backslash x$ such that $x\cup \{v\}\in S^{ind}_{fin}$. Thus, $x R (x\cup\{v\})$. By $\mathsf{DC}$, there exists a sequence $\{x_{i}\}_{i\in\omega}$ such that
$x_{i} R x_{i+1}$ for all $i\in \omega$. Then $\bigcup_{i\in \omega} x_{i}$ is an infinite independent set of $G$.

$(3\Rightarrow 1)$ Let $S$ be a non-empty set, and $R$ be an entire relation on $S$. Consider the following graph $G=(V_{G}, E_{G})$:
\begin{flushleft}
$V_{G}:= S_{fin}$ where $S_{fin}$ is the family of all finite sequences in $S$,

$E_{G}:= \{\{x,y\}: x,y\in V_{G}, \neg (x\overline{R}y), \neg(y\overline{R}x)\}$ where $\overline{R}$ is a binary relation on $S_{fin}$ such that

$x\overline{R} y\Leftrightarrow \vert x\vert < \vert y\vert, x^{i}=y^{i}$ for all $i\in \{1,2,...,\vert x\vert\},$ and $y^{i}Ry^{i+1}$ for all $i\in \{\vert x\vert,...,\vert y\vert-1\}$.
\end{flushleft}

\begin{claim}\label{claim 3.6}
{\em All maximal independent sets in $G$ are infinite.}
\end{claim}

\begin{proof}
Assume $Y=\{y_{1},...,y_{k}\}$ is a finite maximal independent set in $G$. Let $y_{n}\in Y$ be the sequence with the greatest length. Since $R$ is entire, there exists an $x^{1}\in S$ such that $y_{n}^{\vert y_{n}\vert}Rx^{1}$. If $x=(y_{n}^{1},..., y_{n}^{\vert y_{n}\vert}, x^{1})\in S_{fin}$, then $y_{n}\overline{R}x$. Thus, $\neg (x E_{G} y_{n})$ by the definition of $E_{G}$. Fix any $y_{i}\neq y_{n}$ such that $y_{i}\in Y$. Since $\neg (y_{i} E_{G} y_{n})$, we have either $y_{i}\overline{R} y_{n}$ or $y_{n}\overline{R}y_{i}$. Since $y_{n}$ has the greatest length, we have $\neg(y_{n}\overline{R}y_{i})$. Thus $y_{i} \overline{R} y_{n}$. Since $\overline{R}$ is transitive, we have $y_{i}\overline{R}x$. Thus, $Y\cup\{x\}$ is an independent set which contradicts the maximality of $Y$.
\end{proof}

Then there exists an infinite independent set $I$ in $G$. Otherwise, if all independent sets in $G$ are finite, then there exists a maximal independent set $M$ in $G$ by (3). This contradicts claim \ref{claim 3.6} since $M$ has to be finite.  

Now, $I$ is countably infinite. Define $f:I\rightarrow\mathbb{N}$ such that if $y\in I$, then $f$ maps $y$ to $\vert y\vert$. Since $f$ is injective by the definition of $\overline{R}$, we are done.
Let $I=\{y_{n_{i}}:i\in\omega\}$ be an enumeration such that $\vert y_{n_{i}}\vert<\vert y_{n_{j}}\vert$ if $i<j$. 
Then 
$\{y_{n_{1}}^{\vert y_{n_{1}}\vert}, y_{n_{2}}^{\vert y_{n_{1}}\vert+1}, ..., y_{n_{2}}^{\vert y_{n_{2}}\vert}, y_{n_{3}}^{\vert y_{n_{2}}\vert+1},..., y_{n_{3}}^{\vert y_{n_{3}}\vert}, y_{n_{4}}^{\vert y_{n_{3}}\vert+1}$,...$\}$
is the desired infinite sequence, which guarantees that $\mathsf{DC}$ holds.
\end{proof}

\section{Rainbow Ramsey theorem, CRT, and weak choice forms}
\begin{thm}\label{Theorem 4.1}
{\em Fix any integers $n,m\geq 2$. The following hold:
\begin{enumerate}
    \item $\mathsf{RRT}^{m}_{n}$ holds in $\mathcal{N}_{1}$. Thus, $\mathsf{RRT}^{m}_{n}$ 
    does not imply “There are no amorphous sets” in $\mathsf{ZFA}$.
    
    \item $\mathsf{RRT}^{m}_{n}$ holds in $\mathcal{N}_{3}$.  Consequently, $\mathsf{RRT}^{m}_{n}$ implies none of $\mathsf{WOAM}$, $\mathsf{CS}$, and $\mathsf{A}$ (Antichain Principle) in $\mathsf{ZFA}$.
    
    \item $\mathsf{RRT}^{m}_{n} + \Delta SL$ holds in $\mathcal{N}_{26}$.
\end{enumerate}
} 
\end{thm}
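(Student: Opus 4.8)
The plan is to handle all three parts by a single support-analysis strategy, reducing each to a structural lemma about $n$-bounded colourings that genuinely uses the boundedness hypothesis (an $\mathsf{RT}$-type "eventually constant" argument does not apply here). Fix a model $\mathcal{N}\in\{\mathcal{N}_1,\mathcal{N}_3,\mathcal{N}_{26}\}$, an infinite $X$ in $\mathcal{N}$, and an $n$-bounded colouring $\chi\colon[X]^m\to C$. If $X$ has an infinite well-orderable subset we are done by Fact \ref{Fact 3.1}(3), so assume it does not. In each of these models every set has a least finite support, so the map $\varphi\colon x\mapsto E_x$ (least support of $x$) lies in $\mathcal{N}$; if $\varphi$ took a value $F^\ast$ infinitely often then $\varphi^{-1}(F^\ast)$ would be pointwise fixed by $\mathrm{fix}(F^\ast)$, hence well-orderable by Lemma \ref{Lemma 2.4}, contradicting our assumption. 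Thus $\varphi$ is finite-to-one with infinite range, an infinite family of finite sets of atoms; viewing this family as a union of finitely many orbits of $\mathrm{fix}(E_0)$ for a support $E_0$ of $X$, I pass to an infinite orbit $O$. In $\mathcal{N}_1$, $O$ is a copy of $[B]^k$ for a cofinite set $B$ of atoms and some $k\geq1$; in $\mathcal{N}_3$ it is a product $\prod_j[I_j]^{k_j}$ over the open intervals $I_j$ cut out by $E_0$; in $\mathcal{N}_{26}$ it is the analogous canonical object. Using Fact \ref{Fact 3.1}(9) (in $\mathcal{N}_1,\mathcal{N}_3$, and the corresponding partial-choice property of $\mathcal{N}_{26}$) one selects one point from each fibre of $\varphi$ over a non-well-orderable subcollection, obtaining an infinite $Y_0\subseteq X$ on which $\varphi$ is injective, with image an infinite $O'\subseteq O$. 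Transporting $\chi$ through the bijection $Y_0\cong O'$ yields an $n$-bounded colouring $\bar\chi\colon[O']^m\to C$ with a finite support $E$.

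The heart of the proof is the following structural lemma: for $u=\{s_1,\dots,s_m\}\in[O']^m$, writing $V_u=\bigcup_i s_i$ and $c=\bar\chi(u)$, the least support $E_c$ of the colour $c$ satisfies $E_c\setminus E=V_u\setminus E$; that is, $E_c$ contains every atom of $u$ lying outside $E$. The inclusion $E_c\subseteq E\cup V_u$ is immediate since $E\cup V_u$ supports $c$. For the reverse, suppose $a\in V_u\setminus E$ but $a\notin E_c$. In $\mathcal{N}_1$, for each atom $a'$ outside the finite set $E\cup E_c\cup V_u$ the transposition $(a\;a')$ lies in $\mathrm{fix}(E)\cap\mathrm{fix}(E_c)$, so it fixes $c$ while sending $u$ to a distinct element $(a\;a')(u)\in\bar\chi^{-1}(c)$ (distinct because $V_{(a\;a')(u)}\ni a'\notin V_u$); letting $a'$ vary gives infinitely many elements of $\bar\chi^{-1}(c)$, contradicting $n$-boundedness. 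In $\mathcal{N}_3$ one replaces transpositions by order-automorphisms fixing $E\cup E_c$ pointwise and supported on a tiny subinterval of the $E_c$-interval containing $a$ (still infinitely many, each moving only $a$ among the atoms of $V_u$); in $\mathcal{N}_{26}$ one uses the corresponding "local" automorphisms provided by its permutation group. In particular $\bar\chi(u)=\bar\chi(u')$ forces $V_u\setminus E=V_{u'}\setminus E$.

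To conclude, enlarge $E$ and choose an infinite $\mathrm{fix}(E)$-orbit $O''\subseteq O'$ with representative $w^\ast$; then $w^\ast$ has at least one atom outside $E$, and freezing all of $w^\ast$ except one such atom produces an infinite "singleton-petal sunflower" $T=\{s_0\cup\{b\}:b\in B_1\}\subseteq O''$ in which the petal atoms $b$ all lie outside $E$ (in $\mathcal{N}_3$ the petal ranges over an open subinterval, in $\mathcal{N}_{26}$ over the appropriate block). For $u\in[T]^m$ the set $V_u\setminus E$ recovers the petal-atoms of $u$, hence recovers $u$; so distinct $u,u'\in[T]^m$ have $V_u\setminus E\neq V_{u'}\setminus E$ and therefore, by the lemma, $\bar\chi(u)\neq\bar\chi(u')$. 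Pulling $T$ back through $Y_0\cong O'$ gives an infinite $Y\subseteq X$ polychromatic for $\chi$. This proves $\mathsf{RRT}^m_n$ in $\mathcal{N}_1,\mathcal{N}_3,\mathcal{N}_{26}$, which (together with Lemma \ref{Lemma 2.5} and the fact that $\mathcal{N}_1$ has an amorphous set while $\mathcal{N}_3$ has none) yields the stated non-implications in parts (1)--(2). For the $\Delta SL$ clause of part (3), I would verify it separately in $\mathcal{N}_{26}$: by Fact \ref{Fact 3.1}(10) it suffices to check $\mathsf{CUT}$ and $\mathsf{PC}$ there, which is a routine support computation once the structure of $\mathcal{N}_{26}$ is spelled out.

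I expect the structural lemma to be the main obstacle, since the "infinitely many colour-preserving automorphisms" argument must be made fully rigorous in each group (in particular the $\mathcal{N}_3$ case needs the local-automorphism construction to move a single prescribed atom while fixing a given finite set). A secondary point is the reduction to an injective $\varphi$, which rests on Fact \ref{Fact 3.1}(9) in $\mathcal{N}_1,\mathcal{N}_3$ and on the corresponding property of $\mathcal{N}_{26}$; and one must extract from the definition of $\mathcal{N}_{26}$ the precise combinatorial shape of the canonical orbit $O$ (and hence of the sunflower $T$) as well as the verification of $\mathsf{CUT}+\mathsf{PC}$.
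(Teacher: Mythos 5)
Your core argument that $\mathsf{RRT}^{m}_{n}$ holds in $\mathcal{N}_{1}$, $\mathcal{N}_{3}$ and $\mathcal{N}_{26}$ is workable, but it takes a genuinely different and considerably heavier route than the paper. The paper invokes the embedding lemmas (Blass's Lemma \ref{Lemma 4.2} for $\mathcal{N}_{1}$, Lemma \ref{Lemma 4.4} for $\mathcal{N}_{3}$, claim \ref{claim 4.6}(1) for $\mathcal{N}_{26}$) to place a copy of an infinite set of atoms (or an interval, or a subset of some $P_{n}$) inside any non-well-orderable $X$, and then shows directly that $A'\setminus E$ (resp.\ $(e,b)$, resp.\ $p\cap A'$) is polychromatic by moving a single atom to $n$ fresh positions while fixing the supports of $\chi$ and of the two $m$-sets. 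You replace the embedding lemmas by a least-support map, an orbit analysis, and Fact \ref{Fact 3.1}(9) (plus its $\mathcal{N}_{26}$ analogue), and because your canonical copy consists of finite sets of atoms rather than atoms you then need the structural lemma $E_{c}\setminus E=V_{u}\setminus E$ and the sunflower $T$; the engine of that lemma is exactly the paper's automorphism trick. The price of your reduction is that (i) partial choice is already needed for parts (1)--(2), where the paper needs nothing beyond the embedding lemma; (ii) the existence of least supports in all three models, the precise orbit shapes, and the partial-choice property of $\mathcal{N}_{26}$ are asserted rather than proved (this is where the real work sits; the paper's claim \ref{claim 4.6} plays that role and points to the argument templates); (iii) your phrase ``a union of finitely many orbits'' is not quite right, since least-support sizes may be unbounded --- what you actually need, and what is true, is that the range of $\varphi$ contains at least one infinite $\mathrm{fix}_{\mathcal{G}}(E_{0})$-orbit, because only subsets of $E_{0}$ have finite orbits; and (iv) you should say explicitly that since $E$ supports $\bar\chi$ and $\mathrm{dom}(\bar\chi)=[O']^{m}$, every $\pi\in\mathrm{fix}_{\mathcal{G}}(E)$ permutes $O'$ --- this is what keeps $(a\,a')(u)$, $\pi_{r}(u)$ inside the domain of $\bar\chi$ and makes $O'$ a union of $\mathrm{fix}_{\mathcal{G}}(E)$-orbits in your last step.

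There is, however, a genuine gap in how you discharge the ``Consequently'' clause of part (2). To conclude that $\mathsf{RRT}^{m}_{n}$ implies none of $\mathsf{WOAM}$, $\mathsf{CS}$, $\mathsf{A}$, you must show that all three \emph{fail} in $\mathcal{N}_{3}$. Your justification via ``Lemma \ref{Lemma 2.5} and the fact that $\mathcal{N}_{1}$ has an amorphous set while $\mathcal{N}_{3}$ has none'' cannot deliver this: Lemma \ref{Lemma 2.5} says that $\mathsf{CS}$ and $\mathsf{A}$ \emph{hold} in permutation models whose group moves only finitely many atoms, and the group of $\mathcal{N}_{3}$ (order-automorphisms of a dense order) is not of that kind, so the lemma says nothing about $\mathcal{N}_{3}$. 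The absence of amorphous sets plus the non-well-orderability of $A$ does give the failure of $\mathsf{WOAM}$ in $\mathcal{N}_{3}$, but for $\mathsf{CS}$ and $\mathsf{A}$ one needs the cited facts that $\mathsf{CS}$ and $\mathsf{LW}$ fail in $\mathcal{N}_{3}$ together with $\mathsf{A}\Rightarrow\mathsf{LW}$ in $\mathsf{ZFA}$, as in the paper. Likewise, for part (3) the verification of $\mathsf{CUT}+\mathsf{PC}$ in $\mathcal{N}_{26}$ is not a one-line support computation: the paper obtains $\mathsf{PC}$ from the partial-choice property of claim \ref{claim 4.6}(2) together with $\mathsf{WUT}$ and $\mathsf{CUT}$ in $\mathcal{N}_{26}$, and these ingredients would have to be supplied in your write-up as well.
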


\begin{proof}
(1). We recall the description of the basic Fraenkel model $\mathcal{N}_{1}$. We start with a model $M$ of $\mathsf{ZFA} + \mathsf{AC}$ where $A$ is a countably infinite set of atoms.
Let $\mathcal{G}$ be the group of all permutations of $A$, and $\mathcal{F}$ be the filter of subgroups of $\mathcal{G}$ generated by $\{$fix$_{\mathcal{G}}(E):E\in [A]^{<\omega}\}$.
The permutation model determined by $M$, $\mathcal{G}$, and $\mathcal{F}$ is the model $\mathcal{N}_{1}$. We recall that the set $A$ of atoms is amorphous in $\mathcal{N}_{1}$(see \cite{HR1998}). We show that if $X \in \mathcal{N}_{1}$ is an infinite set, then $\mathsf{RRT}^{m}_{n}$ holds for $X$. If $X$ is well-orderable in $\mathcal{N}_{1}$, then we are done by Fact \ref{Fact 3.1}(3). Assume that $X$ is non-well-orderable.
We recall the following result.

\begin{lem}\label{Lemma 4.2}(Blass \cite[Lemma, p.389]{Bla1977})
{\em Any non-well-orderable set in $\mathcal{N}_{1}$ has an 
infinite subset in one-to-one correspondence with a subset of A.} 
\end{lem}

Thus by Lemma \ref{Lemma 4.2}, $X$ contains a copy of an infinite subset $A'$ of $A$, call this copy $Y$. Without loss of generality, we assume $Y\subset A$ and $Y=A'$.

\begin{claim}
{\em $\mathsf{RRT}^{m}_{n}$ holds for $A'$ in $\mathcal{N}_{1}$ i.e., if $\chi: [A']^{m}\rightarrow C$ is an $n$-bounded coloring in $\mathcal{N}_{1}$, then there exists an infinite subset $A''$ of $A'$ such that $A''$ is polychromatic for $\chi$.} 
\end{claim}

\begin{proof}
Let $\chi$ be such a coloring and let $E\in [A]^{<\omega}$ be a support of $\chi$. Then $A'\backslash E$ is an infinite subset of $A'$. We show that $A'\backslash E$ is polychromatic for $\chi$. Otherwise, for some $P,Q\in[A'\backslash E]^{m}$ such that $P\backslash Q\neq \emptyset$, we have $\chi(P)=\chi(Q)$. Pick some $p\in P\backslash Q$. Let,
\begin{enumerate}
\item $\pi_{1}\in$ fix$_{\mathcal{G}}((E\cup P\cup Q)\backslash \{p\})$ be such that 
$\pi_{1}(p)=l_{1}$ for some $l_{1}\not\in E\cup P\cup Q$.
\item Suppose the sequences $\langle\pi_{i}:1\leq i\leq r\rangle$ and $\langle l_{i}:1\leq i\leq r\rangle$ are defined for some fixed $1\leq r< n$. Let $\pi_{r+1}\in$ fix$_{\mathcal{G}}((E\cup P\cup Q)\backslash \{p\})$ be such that 
$\pi_{r+1}(p)=l_{r+1}$ for some $l_{r+1}\not\in E\cup P\cup Q\cup \{l_{i}:1\leq i\leq r\}$. 
\end{enumerate}

Then for all $1\leq r\leq n$, $\pi_{r}(\chi(P))=\pi_{r}(\chi(Q))$. Thus,
$\chi(\{\pi_{r}(p):p\in P\})=\chi(\{\pi_{r}(q):q\in Q\})=\chi(Q)$, 
which contradicts the fact that $\chi$ is $n$-bounded.   
\end{proof}

Thus $\mathsf{RRT}^{m}_{n}$ holds for $Y$. By Fact \ref{Fact 3.1}(3), $\mathsf{RRT}^{m}_{n}$ holds for $X$.

(2). We recall the definition of Mostowski's
linearly ordered model $\mathcal{N}_{3}$  from \cite{HR1998}. We start with a model $M$ of $\mathsf{ZFA + AC}$ with a countably infinite set $A$ of atoms
using an ordering $<$ on $A$ chosen so that $(A, <)$ is order-isomorphic to the set $\mathbb{Q}$ of the rational numbers with the usual ordering. Let $\mathcal{G}$ be the group of all
order automorphisms of $(A, <)$ and $\mathcal{F}$ be the normal filter on $\mathcal{G}$ generated
by the subgroups $\{\text{fix}_{\mathcal{G}}(E), E \in [A]^{<\omega}\}$. Let $\mathcal{N}_{3}$ be the Fraenkel--Mostowski model
determined by $M$, $\mathcal{G}$, and $\mathcal{F}$. In order to show that $\mathsf{RRT}^{m}_{n}$ holds in $\mathcal{N}_{3}$, we will need the following result. 
\begin{lem}\label{Lemma 4.4}(Howard--Saveliev--Tachtsis \cite{HST2016})
{\em Any non-well-orderable set in $\mathcal{N}_{3}$ contains a copy of a bounded open interval of $A$.} 
\end{lem}

Let $X \in \mathcal{N}_{3}$ be an infinite set. In view of the arguments of (1), we may assume that $X$ is not well-orderable. Then by Lemma \ref{Lemma 4.4}, $X$ contains a copy of a bounded open interval of $A$, call this copy $Y$. Without loss of generality, we may assume that $Y\subset A$ and $Y=(a,b)$. 

\begin{claim}\label{claim 4.5}
{\em $\mathsf{RRT}^{m}_{n}$ holds for $Y$ in $\mathcal{N}_{3}$.}   
\end{claim} 

\begin{proof}
Let $\chi: [Y]^{m} \rightarrow C$ be a $n$-bounded coloring in $\mathcal{N}_{3}$ and let $E\in [A]^{<\omega}$ be a support of $\chi$.

Case(i): Suppose $E\cap Y\neq \emptyset$.
Let $e=max_{<}(E\cap Y)$ (we recall that the ordering $<$ of $A$ is in $\mathcal{N}_{3}$). Then $(e,b)$ is an infinite subset of $Y$. We show that $(e,b)$ is polychromatic for $\chi$. Otherwise, there exists $p_{1},...,p_{m},q_{1},...,q_{m}\in (e,b)$ as in (1) such that $\chi(\{p_{1},...,p_{m}\})=\chi(\{q_{1},...,q_{m}\})$. Without loss of generality, assume $p_{1}<...<p_{m}$ and $q_{1}<...<q_{m}$ with respect to the ordering of $A$ (for other orderings of $p_{i}$'s and $q_{i}$'s, the argument below will be similar). Let $P=\{p_{1},...,p_{m}\}$, and $Q=\{q_{1},...,q_{m}\}$. We follow the steps below:
\begin{enumerate}[$(a)$.]
    \item Let $s=max_{<}\{i:p_{j}=q_{j} (\forall 1\leq j\leq i)\}$. 
    \item Let $g=min_{<}((P\cup Q)\backslash \{p_{i}:1\leq i\leq s\})$ and $h=min_{<}((P\cup Q)\backslash (\{p_{i}:1\leq i\leq s\}\cup \{g\}))$. 
    \item Pick $l_{1}\in (g,h)$.
    \item Let $\pi_{1}\in$ fix$_{\mathcal{G}}((E\cup P\cup Q)\backslash \{g\})$ be an ordered automorphism such that $\pi_{1}(g)=l_{1}$.

    \item If the sequence of ordered automorphisms $\langle\pi_{1}, ..., \pi_{r}\rangle$ and the sequence $\langle l_{1}, ..., l_{r}\rangle$ are defined for some $1\leq r< n$, then pick $l_{(r+1)}\in (l_{r},h)$ and let $\pi_{r+1}\in$ fix$_{\mathcal{G}}((E\cup P\cup Q)\backslash \{g\})$ be an ordered automorphism such that $\pi_{r+1}(g)=l_{(r+1)}$. 
\end{enumerate}
Then we obtain a sequence of ordered automorphisms $\langle\pi_{1}, ..., \pi_{n}\rangle$ such that $\pi_{i}(g)\neq \pi_{j}(g)$ for all $i\neq j$ and $1\leq i,j\leq n$.
The rest follows by the arguments of (1).

Case(ii): Suppose $E\cap Y= \emptyset$. 
Then by the arguments in Case (i), $(a,b)$ is an infinite set which is polychromatic for $\chi$. 
\end{proof}
Thus by Fact \ref{Fact 3.1}(3), $\mathsf{RRT}^{m}_{n}$ holds for $X$. The rest follows by the following known facts about $\mathcal{N}_{3}$:

\begin{enumerate}[(i)]
    \item $\mathsf{WOAM}$ fails in $\mathcal{N}_{3}$ (cf. \cite{HR1998}).
    \item $\mathsf{CS}$ and $\mathsf{LW}$ (Every linearly ordered set can be well ordered) fail in $\mathcal{N}_{3}$ \cite[Theorem 7]{Tac2022} and $\mathsf{A}$ implies $\mathsf{LW}$ in $\mathsf{ZFA}$ \cite[Theorem 9.1]{Jec1973}.
\end{enumerate}

(3). We recall the description of  Brunner/Pincus’s Model $\mathcal{N}_{26}$. We start
with a model $M$ of $\mathsf{ZFA} + \mathsf{AC}$ with a denumerable set $A$ of atoms which is
a denumerable disjoint union of denumerable sets, so that $A =\bigcup \{P_{n}: n \in \omega\}$, where $\{P_{n}: n \in \omega\}$ is disjoint and $\vert P_{n}\vert = \aleph_{0}$ for all $n \in \omega$. Let $\mathcal{G}$ be the group of all permutations $\phi$ of $A$ such that $\phi(P_{n}) = P_{n}$ for all $n \in \omega$.
Let $\mathcal{I}$ be the normal ideal of all finite subsets of $A$. Then,
$\mathcal{N}_{26}$ is the permutation model determined by $M$, $\mathcal{G}$ and $\mathcal{I}$. Since for every $n \in \omega$, $P_{n}$ is amorphous in $\mathcal{N}_{26}$ (see \cite{HR1998}), following the arguments of Blass \cite[Lemma, p.389]{Bla1977} and Tachtsis \cite[Lemma 4]{Tac2018}, we can see the following:

\begin{claim}\label{claim 4.6}
{\em The following hold in $\mathcal{N}_{26}$:
\begin{enumerate}
    \item Any non-well-orderable set contains a copy of an infinite subset of $P_{n}$ for some $n \in \omega$.

    \item Any non-well-orderable family of non-empty sets has a non-well-orderable subfamily with a choice function.
\end{enumerate}
}
\end{claim}

By claim \ref{claim 4.6}(1) and following the arguments of (1), $\mathsf{RRT}^{m}_{n}$ holds in $\mathcal{N}_{26}$.
By applying claim \ref{claim 4.6}(2) and following the arguments due to Tachtsis \cite[proof of Theorem 7(i)]{Tac2018}, we can see that $\mathsf{PC}$ is true in $\mathcal{N}_{26}$. For the reader's convenience, we write down the proof. Let $\mathcal{F}$ be an uncountable family (i.e. $\vert \mathcal{F}\vert \not\leq \aleph_{0}$) of non-empty countable sets in $\mathcal{N}_{26}$. If $\mathcal{F}$ is well-orderable, then the conclusion follows from the fact that in $\mathcal{N}_{26}$, $\mathsf{WUT}$ holds (see \cite{HR1998}). If $\mathcal{F}$ is not well-orderable,
then by claim \ref{claim 4.6}(2), there is a non-well-orderable and thus an uncountable subfamily $\mathcal{F}'$ of $\mathcal{F}$ with a choice function in $\mathcal{N}_{26}$. The rest follows by Fact \ref{Fact 3.1}(10) and the fact that $\mathsf{CUT}$ holds in $\mathcal{N}_{26}$.
\end{proof}

\begin{remark}\label{Remark 4.7}
We can observe a different argument to see that $\mathsf{RRT}^{2}_{n}$ holds in $\mathcal{N}_{3}$. Tachtsis \cite{Tac2016} proved that $\mathsf{RT}^{2}_{2}$ (and thus $\mathsf{RT}^{2}_{n}$ by Fact \ref{Fact 3.1}(8)) holds in $\mathcal{N}_{3}$. Since $\mathsf{AC}_{fin}$ holds in $\mathcal{N}_{3}$,\footnote{In fact, for every family of non-empty well-orderable sets there exists a choice function in $\mathcal{N}_{3}$ (see \cite[Chapter 4, Problem 14, page 53]{Jec1973})).} we have that $\mathsf{AC}_{\leq n}$ holds in $\mathcal{N}_{3}$. The rest follows by Galvin's trick mentioned in Fact \ref{Fact 3.1}(1).
\end{remark}

\begin{thm}\label{Theorem 4.9}
{\em The following hold:
\begin{enumerate}
    \item $\mathsf{OP}+\mathsf{RT}^{4}_{203}$ implies $\mathsf{CRT}$ in $\mathsf{ZF}$.
    \item $\mathsf{DF=F}$ is strictly stronger than $\mathsf{CRT}$ in $\mathsf{ZFA}$.
\end{enumerate}
}
\end{thm}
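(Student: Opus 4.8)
My plan is to obtain part (1) as a sharpening of the proof of Proposition \ref{Proposition 3.2}(1), and then derive part (2) by applying part (1) inside the Mostowski linearly ordered model $\mathcal{N}_{3}$. For part (1), let $X$ be an infinite set and $f:[X]^{2}\rightarrow C$ a coloring. First I use $\mathsf{OP}$ to fix a linear order $<$ on $X$, and then --- exactly as in the proof of Proposition \ref{Proposition 3.2}(1) --- I define $\chi':[X]^{4}\rightarrow 203$ by sending each $S\in[X]^{4}$, enumerated increasingly as $x_{1}<x_{2}<x_{3}<x_{4}$, to the equality-pattern of $f$ on the six pairs in $[\{x_{1},x_{2},x_{3},x_{4}\}]^{2}$. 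Now I apply the full principle $\mathsf{RT}^{4}_{203}$ \emph{directly to $X$} (no passage to a countable subset is needed here, unlike in Proposition \ref{Proposition 3.2}(1)), obtaining an infinite $Y\subseteq X$ monochromatic for $\chi'$; i.e. every $4$-element subset of $Y$ realizes one fixed pattern $\mathcal{P}$. With $R={<}\restriction Y$, the choice-free combinatorial analysis of Graham--Rothschild--Spencer \cite[section 5.5]{GRS1980} already invoked in Proposition \ref{Proposition 3.2}(1) --- it performs only finite manipulations of elements of $Y$ after the single use of Ramsey --- shows that $\mathcal{P}$ is one of the four canonical patterns, so that $(Y,R)$ witnesses one of $(a)$--$(d)$; hence $\mathsf{CRT}$ holds. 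I expect no real obstacle in part (1).

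For part (2), the implication $\mathsf{DF=F}\Rightarrow\mathsf{CRT}$ is Proposition \ref{Proposition 3.2}(1), so strictness amounts to finding a model of $\mathsf{ZFA}$ with $\mathsf{CRT}$ but not $\mathsf{DF=F}$; I propose $\mathcal{N}_{3}$ (described in the proof of Theorem \ref{Theorem 4.1}(2)). By part (1) it suffices to check that $\mathsf{OP}$ and $\mathsf{RT}^{4}_{203}$ both hold in $\mathcal{N}_{3}$. That $\mathsf{OP}$ holds in $\mathcal{N}_{3}$ is well-known (see \cite{HR1998}). For $\mathsf{RT}^{4}_{203}$: given $\chi:[X]^{4}\rightarrow 203$ in $\mathcal{N}_{3}$, if $X$ is well-orderable we are done by Fact \ref{Fact 3.1}(2); otherwise, by Lemma \ref{Lemma 4.4} we may assume $X\supseteq Y=(a,b)\subseteq A$, and, writing $E$ for a support of $\chi\restriction[Y]^{4}$ and $e=\max_{<}(E\cap Y)$ (or setting $Y'=Y$ if $E\cap Y=\emptyset$), the interval $Y'=(e,b)$ is infinite and disjoint from $E$; since any two $4$-element subsets of $Y'$ are conjugate by an order-automorphism of $A$ that fixes $A\setminus Y'$ pointwise (hence fixes $E$), since $\chi$ is $\mathrm{fix}_{\mathcal{G}}(E)$-invariant, and since the colours $0,\dots,202$ are fixed points, $\chi$ is constant on $[Y']^{4}$, so $Y'\subseteq X$ is an infinite monochromatic set. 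This is Tachtsis's argument for $\mathsf{RT}^{2}_{2}$ in $\mathcal{N}_{3}$ (cf. Fact \ref{Fact 3.1}(7) and Remark \ref{Remark 4.7}), lifted to $4$-tuples. Thus $\mathsf{RT}^{4}_{203}$, and hence $\mathsf{CRT}$, holds in $\mathcal{N}_{3}$.

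On the other hand, $\mathsf{DF=F}$ fails in $\mathcal{N}_{3}$: the set $A$ of atoms is infinite and Dedekind-finite, for any injection $g:\omega\rightarrow A$ in $\mathcal{N}_{3}$ would have a finite support $E$, whence each $g(n)$ is fixed by $\mathrm{fix}_{\mathcal{G}}(E)$ and thus lies in $E$ (in the dense order $(A,<)\cong(\mathbb{Q},<)$ no atom outside $E$ is fixed by $\mathrm{fix}_{\mathcal{G}}(E)$), contradicting $|E|<\omega$; this is well-known (see \cite{HR1998}). Therefore $\mathsf{DF=F}$ is strictly stronger than $\mathsf{CRT}$ in $\mathsf{ZFA}$.

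The one delicate point --- and the reason part (2) does not follow from Tachtsis's work on $\mathsf{RT}$ alone --- is that $\mathsf{CRT}$ has an \emph{arbitrary} colour set $C$, so in $\mathcal{N}_{3}$ one cannot manufacture a monochromatic set for $f$ directly from homogeneity, since the automorphisms may move the colours. The remedy, packaged in part (1), is to pass to the $203$-valued equality-pattern coloring, whose colours \emph{are} fixed points; homogeneity then gives a set on which the equality pattern of $f$ (rather than $f$ itself) is constant, which is precisely the input consumed by the choice-free Graham--Rothschild--Spencer analysis. (Equivalently, one could argue $\mathsf{CRT}$ directly in $\mathcal{N}_{3}$: on $Y'=(e,b)$ the equality pattern of $f$ on $4$-subsets is already constant, because $\mathrm{fix}_{\mathcal{G}}(E)$ acts injectively on $C$ and therefore preserves equality of colours even when it moves them, and one then invokes the same analysis.)
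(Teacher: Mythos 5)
Your proposal is correct and follows essentially the same route as the paper: part (1) via the linear order from $\mathsf{OP}$, the $203$-pattern coloring, $\mathsf{RT}^{4}_{203}$, and the choice-free Graham--Rothschild--Spencer analysis; part (2) via $\mathcal{N}_{3}$, where $\mathsf{OP}$ holds, $\mathsf{DF=F}$ fails, and $\mathsf{RT}^{4}_{203}$ holds by the Tachtsis-style support/order-automorphism argument that the paper cites and you spell out explicitly (correctly, including the point that the colors $0,\dots,202$ are fixed points).
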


\begin{proof}
(1). Let $X$ be an infinite set, and $\chi:[X]^{2}\rightarrow C$ be a coloring. By $\mathsf{OP}$, $X$ is linearly orderable. Fix a linear ordering $<$ of $X$. Consider the coloring $\chi':[X]^{4}\rightarrow 203$ as in Proposition \ref{Proposition 3.2}(1), with respect to the ordering $<$ of $X$. By $\mathsf{RT}^{4}_{203}$, there exists an infinite set $Y\subseteq X$ which is monochromatic for $\chi'$. Define $R=<\restriction Y$. Since $R$ is a linear ordering on $Y$, following the proof of Proposition \ref{Proposition 3.2}(1) (in particular, following the arguments in \cite[the proof of Theorem 2 in section 5.5, page 129]{GRS1980}), we can see that $Y$ is the desired infinite subset of $X$ with ordering $R$.

(2). We show that $\mathsf{CRT}$ holds in Mostowski’s linearly ordered model $\mathcal{N}_{3}$ where $\mathsf{DF=F}$ fails (see \cite{HR1998}). Working in much the same way as in Tachtsis \cite[Theorem 2.4]{Tac2016}, we can see that $\mathsf{RT}^{4}_{203}$ holds in $\mathcal{N}_{3}$. Since $\mathsf{OP}$ is true in $\mathcal{N}_{3}$, the rest follows from (1) and Proposition \ref{Proposition 3.2}(1).
\end{proof}




\begin{remark}
We remark that {\em $\mathsf{AC^{WO}_{fin}}+\mathsf{RRT}^{m}_{n}+\mathsf{CRT}$ does not imply $\mathsf{CAC}^{\aleph_{0}}$ in $\mathsf{ZFA}$}.
Consider the model $\mathcal{N}_{41}$ from \cite{HR1998}. 
We start with a model $M$ of $\mathsf{ZFA + AC}$ where $A =\bigcup \{A_{n}:n \in \omega\}$ is a disjoint union, where each $A_{n}$ is countably infinite  and for each $n \in\omega$, 
$(A_{n}, \leq_{n}) \cong (\mathbb{Q}, \leq)$ (i.e., ordered like the rationals by $\leq_{n}$). Let $\mathcal{G}$ be the group of all permutations on $A$ such that for all $n \in \omega$, and all $\phi\in \mathcal{G}$, $\phi$ is an order automorphism of $(A_{n}, \leq_{n})$. Let $\mathcal{I}$ be the normal ideal of subsets of $A$ which is generated by finite unions of $A_{n}$’s. Let $\mathcal{N}_{41}$ be the Fraenkel–Mostowski model determined by $M$, $\mathcal{G}$, and $\mathcal{I}$.
In $\mathcal{N}_{41}$, $\mathsf{DF=F}$ holds  (cf. \cite[Theorem 4]{Tac2019b}, \cite[Note 112]{HR1998}).
Consequently, $\mathcal{N}_{41}\models\mathsf{RRT}^{m}_{n}+\mathsf{CRT}$ by Proposition \ref{Proposition 3.2}(1,2). In \cite[the proof of Theorem 4.1(4)]{BG2023}, Banerjee and Gopaulsingh observed that $\mathsf{CAC}^{\aleph_{0}}$ fails in $\mathcal{N}_{41}$. 

In \cite[Note 112]{HR1998}, it is shown that $\mathsf{AC^{WO}_{fin}}$ is true in $\mathcal{N}_{41}$. We present a different argument that is fairly similar to the one given in \cite{HR1998}. In particular, we follow the methods of Tachtsis \cite[Theorem 4]{Tac2019b} to show that $\mathsf{AC^{WO}_{fin}}$ is true in $\mathcal{N}_{41}$ which uses the
following result of Truss.

\begin{lem}{(Truss \cite[Theorem 3.5]{Tru1989})}\label{Lemma 4.10}
{\em Let $A(\mathbb{Q})$ be the group of all order automorphisms of $(\mathbb{Q}, \leq)$.
If $H$ is a subgroup of $A(\mathbb{Q})$ of index less than $2^{\aleph_{0}}$, then for some finite $A \subset \mathbb{Q}$,
{\em fix}$(A) = H$ (i.e., $H = \{\phi \in A(\mathbb{Q}) : (\forall a \in A)(\phi(a) = a)\})$. Thus, every proper
subgroup of $A(\mathbb{Q})$ has  infinite index in $A(\mathbb{Q})$.} 
\end{lem}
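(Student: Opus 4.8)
The plan is to separate Truss's statement into the two halves it bundles together. Write $G = A(\mathbb{Q})$, and for a finite $F \subset \mathbb{Q}$ let $\mathrm{fix}(F)$ denote its pointwise stabilizer in $G$; these form a neighbourhood basis at the identity consisting of open subgroups in the topology of pointwise convergence, and each $\mathrm{fix}(F)$ has only countably many cosets (there are only countably many increasing $|F|$-tuples of rationals and, by ultrahomogeneity of $(\mathbb{Q},\leq)$, $G$ is transitive on them). It suffices to prove \emph{(A)}: every $H \leq G$ with $[G:H] < 2^{\aleph_0}$ contains $\mathrm{fix}(F)$ for some finite $F$; and \emph{(B)}: whenever $\mathrm{fix}(F) \subseteq H \subseteq G$ one has $H = \mathrm{fix}(F')$ for some finite $F'$. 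Granting these, a subgroup of index $< 2^{\aleph_0}$ equals some $\mathrm{fix}(F')$, which is the first assertion; and since $\mathrm{fix}(\emptyset) = G$ while $\mathrm{fix}(F')$ has countable, hence infinite, index for $F' \neq \emptyset$, every proper subgroup of index $< 2^{\aleph_0}$ --- in particular every proper subgroup of finite index --- is infinite-index, which is the final assertion.

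For \emph{(A)} I would argue contrapositively: supposing $\mathrm{fix}(F) \not\subseteq H$ for \emph{every} finite $F$, I would exhibit $2^{\aleph_0}$ pairwise-distinct right cosets of $H$. Fix pairwise disjoint bounded open intervals $J_0, J_1, \dots$ of $\mathbb{Q}$, separated by rationals, and build along the tree $2^{<\omega}$ a family $(a_n)_{n\in\omega}$ of order-automorphisms with $a_n$ supported inside $J_n$, so that the $a_n$ pairwise commute; to a branch $s \in 2^{\omega}$ assign $g_s = \prod_{s(n) = 1} a_n$, the limit of the finite commuting partial products (well defined because the unused factors have supports in the disjoint tail). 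For $s \neq t$ the element $g_s^{-1} g_t$ is a product of the $a_n^{\pm 1}$ over the coordinates where $s$ and $t$ disagree, so the crux is to choose the $a_n$ so that \emph{every} non-trivial such product falls outside $H$. This ``$H$-independence'' cannot be read off merely from $a_n \notin H$; one has to use the hypothesis quantitatively, via $[\mathrm{fix}(F):\mathrm{fix}(F)\cap H] \leq [G:H] < 2^{\aleph_0}$ for all finite $F$, together with the decomposition $\mathrm{fix}(F) \cong \prod_{I}\mathrm{Aut}(I)$ over the complementary intervals $I$ and the ultrahomogeneity of $(\mathbb{Q},\leq)$, to amplify the failure ``$\mathrm{fix}(F_n) \not\subseteq H$'' at stage $n$ into a genuinely binary branching confined to $J_n$, hence immune to all later modifications. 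Arranging this amplification is the main obstacle and the heart of the small-index machinery for countable homogeneous structures.

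For \emph{(B)}, given $\mathrm{fix}(F) \subseteq H \subseteq G$, put $F' = \{q \in \mathbb{Q} : (\forall h \in H)\ h(q) = q\}$; then $F' \subseteq F$ is finite and $H \subseteq \mathrm{fix}(F')$, so it remains to get $\mathrm{fix}(F') \subseteq H$. For this I would use the ``rigidity'' of the lattice of finite pointwise stabilizers, namely $\langle \mathrm{fix}(S), \mathrm{fix}(T)\rangle = \mathrm{fix}(S \cap T)$ for all finite $S, T \subset \mathbb{Q}$, which is a back-and-forth consequence of ultrahomogeneity of $(\mathbb{Q},\leq)$: if $\mathrm{fix}(F) \subsetneq H$, pick $h \in H$ with $hF \neq F$, whence $H \supseteq \langle \mathrm{fix}(F), h\,\mathrm{fix}(F)\,h^{-1}\rangle = \langle \mathrm{fix}(F), \mathrm{fix}(hF)\rangle = \mathrm{fix}(F \cap hF)$ with $F \cap hF$ strictly smaller; iterating (which terminates since $F$ is finite) drives $H$ down to contain $\mathrm{fix}(F')$, giving $H = \mathrm{fix}(F')$. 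This part is routine homogeneity bookkeeping.
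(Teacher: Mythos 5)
The paper itself offers no proof of this lemma: it is quoted as Truss's Theorem 3.5 from \cite{Tru1989} and used as a black box, so the only question is whether your argument actually establishes it, and it does not. Your reduction to \emph{(A)} (every subgroup of index $<2^{\aleph_0}$ contains $\mathrm{fix}(F)$ for some finite $F$, i.e.\ the small index property of $A(\mathbb{Q})$) plus \emph{(B)} (every subgroup between $\mathrm{fix}(F)$ and $A(\mathbb{Q})$ is itself a pointwise stabilizer of a finite set) is a sensible organisation, and your derivation of the final sentence from the first assertion is fine, using that setwise and pointwise stabilizers of finite sets coincide for order automorphisms. But \emph{(A)} is the entire content of Truss's theorem, and at exactly the point where the work must happen --- choosing the automorphisms $a_n$ so that \emph{every} nontrivial (possibly infinitely supported) product of the $a_n^{\pm1}$ avoids $H$, a family of continuum many elements to be excluded --- you say only that this ``amplification'' is ``the main obstacle and the heart of the small-index machinery'' and go no further. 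Knowing merely that $\mathrm{fix}(F)\not\subseteq H$ for every finite $F$ does not by itself let you place the $2^{\aleph_0}$ branch products $g_s$ into distinct cosets; the known arguments require a genuine further device (a B.~H.~Neumann-style lemma on covering groups by cosets, or Truss's generic-sequence construction) to convert ``$H$ is not open'' into such an $H$-independent family. As written, the proposal defers precisely the step for which the lemma is being cited, so it is an outline rather than a proof.

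A secondary point: part \emph{(B)} rests entirely on the identity $\langle \mathrm{fix}(S),\mathrm{fix}(T)\rangle=\mathrm{fix}(S\cap T)$ for finite $S,T\subset\mathbb{Q}$, which you assert as ``routine homogeneity bookkeeping.'' The identity is correct, but it is not a one-line back-and-forth: when the points of $(S\cup T)\setminus(S\cap T)$ are spread over several intervals determined by $S\cap T$, one needs a staging argument (move one block far away using one stabilizer, conjugate to get the stabilizer of the new position inside the generated group, then move the other block, etc.), since single-step corrections as in the two-point case do not suffice. Your iteration $F\mapsto F\cap hF$, which strictly decreases and terminates because an order automorphism preserving a finite set setwise fixes it pointwise, is fine once that identity is in hand.
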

Let $\mathcal{X}=\{X_{\alpha}:\alpha\in\kappa\}$ be an infinite well-ordered set of non-empty finite sets in $\mathcal{N}_{41}$ for some infinite well-ordered cardinal $\kappa$. Let $E =\bigcup\{A_{n} : n < k + 1\}$ (where $k \in \omega$) be a support of $X_{\alpha}$ for all $\alpha\in\kappa$. 
Such an $E$ exists by Lemma \ref{Lemma 2.4}, since 
$\mathcal{X}$ is well orderable in $\mathcal{N}_{41}$.
We show that $E$ is a support of every element of $\bigcup\mathcal{X}$; hence,
$\bigcup\mathcal{X}$ will be well orderable in the model by Lemma \ref{Lemma 2.4}. Otherwise,
there exists $\alpha\in\kappa$ and  $x \in X_{\alpha}$ such that $E$ is not a support of $x$.
Then there exists a permutation $\phi \in $ fix$_{\mathcal{G}}(E)$ such that $\phi(x) \neq x$. Let $E_{x}$ be a support of $x$. Without loss of generality, assume that $E_{x} = E \cup A_{k+1}$ and $\phi \in$ fix$_{\mathcal{G}}(A\backslash A_{k+1})$. We follow the ideas of  Tachtsis \cite[Theorem 4]{Tac2019b} to observe the following:

\begin{enumerate}
\item  The group fix$_{\mathcal{G}}(A\backslash A_{k+1})$ is isomorphic
to $A(A_{k+1})$.  We
denote fix$_{\mathcal{G}}(A\backslash A_{k+1})$ by $\mathcal{G}'$.
\item The $\mathcal{G}'$-orbit of $x$, $Orb_{\mathcal{G}'}(x)=\{\phi(x):\phi\in \mathcal{G}'\}$ is a subset of $X_{\alpha}$ as $x\in X_{\alpha}$, $E$ is a support of $X_{\alpha}$, and $\mathcal{G}'\subseteq$ fix$_{\mathcal{G}}(E)$. Hence, $Orb_{\mathcal{G}'}(x)$ is finite.
\item Since $\phi(x),x\in X_{\alpha}$ and $\phi(x) \neq x$, we have $\vert Orb_{\mathcal{G}'}(x)\vert\geq 2$. Moreover, $Stab_{\mathcal{G}'}(x) = \{\eta \in \mathcal{G}' : \eta(x) = x\}$ is a proper subgroup of $\mathcal{G}'$. 
\item Since $Orb_{\mathcal{G}'}(x)$ is finite, the index $\vert \mathcal{G}':Stab_{\mathcal{G}'}(x)\vert$ is finite. Thus, $Stab_{\mathcal{G}'}(x)$ is a proper subgroup of $A({A}_{k+1})$ which has finite index in $A({A}_{k+1})$. This contradicts Lemma \ref{Lemma 4.10}.
\end{enumerate}
\end{remark}

\section{Results in the Basic Cohen Model}
\begin{defn}\label{Definition 5.1} We recall the description of the Basic Cohen Model $\mathcal{M}_{1}$.
Let $M$ be a countable transitive model of $\mathsf{ZF} + \mathsf{V = L}$. The notion of forcing is the poset $\mathbb{P}=(P, \leq)$ (of $M$), where $P$ is the set of all finite partial functions from
$\omega \times \omega$ into $2$, and $\leq$ is reverse inclusion, i.e., $p \leq q$ if and only if
$p \supseteq q$. Let $G$ be a $\mathbb{P}$-generic set over $M$ and let $M[G]$ be the generic extension model of $M$. Every permutation $\pi$ of $\omega$ induces an order automorphism
of $(P, \leq)$ as follows:
\begin{center}
$dom(\pi p) = \{(\pi n, m):(n, m) \in dom(p)\},
(\pi p)(\pi n, m) = p(n, m)$.    
\end{center}

Let $\mathcal{G}$ be the group of all order automorphisms of $(P, \leq)$ induced by
permutations of $\omega$ as above and $\mathcal{F}$ be the normal filter on $\mathcal{G}$ generated by $\{$fix$_{\mathcal{G}}(E), E \in [\omega]^{<\omega}\}$. 
For a $\mathbb{P}$-name $\tau$, we denote its symmetric group with respect to $\mathcal{G}$ by $sym^{\mathcal{G}}(\tau) =\{g\in \mathcal{G} : g\tau = \tau\}$ and say $\tau$ is {\em symmetric} with respect to $\mathcal{F}$ if $sym^{\mathcal{G}}(\tau)\in\mathcal F$. Let $HS$ be the class of all hereditary symmetric names i.e.,  for a $\mathbb{P}$-name $\tau$, 
    $\tau\in HS$ iff $\tau$ is symmetric with respect to $\mathcal{F}$, and for each $\sigma\in dom(\tau)$, $\sigma\in HS$.
The symmetric submodel $\{\tau^{G} : \tau \in HS\}$ of $M[G]$ is the model $\mathcal{M}_{1}$. We refer the reader to \cite[Section 5.3]{Jec1973} for details concerning $\mathcal{M}_{1}$.
Let $A =\{x_{n}: n \in \omega\}$ be the  countably many added Cohen reals (where for $n \in \omega$, $x_{n} = \{m \in \omega : \exists p \in G, p(n, m) = 1\}$) in $\mathcal{M}_{1}$. 
\end{defn}
In \cite{Pal2013}, Palumbo proved the following lemma and used it to show that $\mathsf{RRT}^{2}_{2}$ holds in $\mathcal{M}_{1}$.

\begin{lem}\label{Lemma 5.2}{(Palumbo; \cite[Lemma 2.6]{Pal2013})} {\em If $B \in \mathcal{M}_{1}$ is a non-well-orderable set, then $\mathcal{M}_{1}$ contains a bijection of $B$ with an infinite subset of $A$.}
\end{lem}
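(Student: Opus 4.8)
The plan is to run the standard ``minimal support'' argument available in the basic Cohen model $\mathcal{M}_1$, mirroring Blass's proof of Lemma \ref{Lemma 4.2} for $\mathcal{N}_1$. First I would record the structural facts about $\mathcal{M}_1$ that I intend to use. Every $b\in\mathcal{M}_1$ has a hereditarily symmetric name $\dot b$ and a finite support $E_b\in[\omega]^{<\omega}$, i.e. fix$_{\mathcal{G}}(E_b)\subseteq sym^{\mathcal{G}}(\dot b)$; moreover, since $\mathcal{G}$ is induced by the full symmetric group of $\omega$ acting on the columns, each $b$ has a unique $\subseteq$-minimal support, which I again write $E_b$, and these minimal supports transform equivariantly, $E_{gb}=g[E_b]$ for every $g\in\mathcal{G}$. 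I would also invoke the forcing analogue of Lemma \ref{Lemma 2.4}: a set is well-orderable in $\mathcal{M}_1$ exactly when some finite $E$ supports each of its elements pointwise (equivalently, fix$_{\mathcal{G}}(E)$ fixes the set pointwise).

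Next, fix a hereditarily symmetric name $\dot B$ for $B$ with support $E$, and locate a ``spread-out'' element. If every $b\in B$ satisfied $E_b\subseteq E$, then fix$_{\mathcal{G}}(E)$ would fix $B$ pointwise, so by the well-orderability criterion $B$ would be well-orderable, contrary to hypothesis. Hence there is $b_0\in B$ with $E_{b_0}\not\subseteq E$; fix $n_0\in E_{b_0}\setminus E$ and set $D=(E\cup E_{b_0})\setminus\{n_0\}$. For each $m\in S:=\omega\setminus(E\cup E_{b_0})$ let $\pi_m\in\mathcal{G}$ be induced by the transposition $(n_0\,m)$; since $n_0,m\notin D$ we have $\pi_m\in$ fix$_{\mathcal{G}}(D)\subseteq$ fix$_{\mathcal{G}}(E)\subseteq sym^{\mathcal{G}}(\dot B)$, so $b_m:=\pi_m(b_0)\in B$. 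Equivariance of minimal supports gives $E_{b_m}=(E_{b_0}\setminus\{n_0\})\cup\{m\}$, so the $b_m$ are pairwise distinct and $B_0:=\{b_m:m\in S\}$ is an infinite subset of $B$ lying in $\mathcal{M}_1$ (it has support $E\cup E_{b_0}$).

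The decisive step is to extract from this orbit a bijection with a subset of $A$ that is itself symmetric. Writing $D_0=E_{b_0}\setminus\{n_0\}$, each $b\in B_0$ has minimal support $D_0\cup\{m_b\}$, where $m_b\in S$ is the unique element of $E_b\setminus D_0$; define $F\colon B_0\to A$ by $F(b)=x_{m_b}$. Then $F$ is injective with range the infinite set $\{x_m:m\in S\}\subseteq A$. To see $F\in\mathcal{M}_1$ I would verify that fix$_{\mathcal{G}}(E\cup E_{b_0})$ fixes the canonical name of $F$: any $\sigma$ in this group fixes $b_0$ and $n_0$, maps $S$ bijectively onto $S$, and conjugates $\pi_m$ to $\pi_{\sigma(m)}$, whence $\sigma(b_m)=b_{\sigma(m)}$ and $\sigma(x_m)=x_{\sigma(m)}$; thus $\sigma$ permutes the graph $\{(b_m,x_m):m\in S\}$ of $F$ and $\sigma(F)=F$. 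Since this name is hereditarily symmetric, $F$ is a bijection in $\mathcal{M}_1$ between the infinite subset $B_0\subseteq B$ and the infinite subset $\{x_m:m\in S\}$ of $A$, which is the conclusion sought (in the same sense as the companion Lemma \ref{Lemma 4.2}).

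The main obstacle is the minimal-support apparatus in the symmetric \emph{forcing} setting, as opposed to a permutation model: working with $\mathbb{P}$-names and the symmetric forcing relation, one must justify that least supports exist (that the intersection of two supports is again a support) and satisfy $E_{gb}=g[E_b]$, and that $B_0$ and $F$, both defined by reference to these minimal supports, possess hereditarily symmetric names. Once equivariance of minimal supports is secured, the orbit construction and the conjugation check that $F$ is symmetric are routine, and the well-orderability criterion of the first step supplies the initial dichotomy.
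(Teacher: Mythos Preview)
The paper does not prove this lemma itself; it is quoted from Palumbo, and the method is exposed in the paper's proof of Lemma~\ref{Lemma 5.3}. Your high-level plan (find $b_0\in B$ whose least support protrudes from the support of $B$, then take an orbit under transpositions moving the protruding index) is Palumbo's, but your translation from the permutation-model template of Lemma~\ref{Lemma 4.2} to the symmetric-forcing setting has a genuine gap that your final paragraph names but does not close.

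The inference ``$\pi_m\in sym^{\mathcal G}(\dot B)$, so $b_m:=\pi_m(b_0)\in B$'' does not go through: permutations in $\mathcal G$ act on \emph{names}, not on elements of $\mathcal M_1$, so ``$\pi_m(b_0)$'' has no meaning as written; and even reading it as $(\pi_m\dot b_0)^G$, from $p\in G$ with $p\Vdash\dot b_0\in\dot B$ one obtains only $\pi_m(p)\Vdash\pi_m\dot b_0\in\dot B$, with no guarantee that $\pi_m(p)\in G$. The same defect undermines the equivariance claim $E_{gb}=g[E_b]$, the pairwise distinctness of the $b_m$, and the symmetry verification for $F$. Palumbo's remedy is not a detail but the core of the argument (see the paragraph in the proof of Lemma~\ref{Lemma 5.3} describing $\sigma$ and $f=\sigma^G$): one fixes $p\in G$ forcing the relevant facts about $\dot b_0$ and its least support, builds the hereditarily symmetric \emph{name}
\[
\sigma=\{\langle\langle\pi(\dot x_{n_0}),\pi(\dot b_0)\rangle,\pi(p)\rangle:\pi\in\mathrm{fix}_{\mathcal G}(e)\}
\]
(with $e$ the indices of $D$), and proves that $f=\sigma^G=\{\langle(\pi\dot x_{n_0})^G,(\pi\dot b_0)^G\rangle:\pi\in\mathrm{fix}_{\mathcal G}(e),\ \pi(p)\in G\}$ is an injection with range contained in $B$ --- membership now follows precisely because $\pi(p)\in G$ --- and with domain an infinite subset of $A$, the last point by a genericity argument showing that infinitely many $\pi(p)$ lie in $G$. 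Your sketch carries no condition $p$ through the construction, and without it none of membership in $B$, injectivity, or infinitude of the domain can be secured; the phrase ``once equivariance of minimal supports is secured \dots\ routine'' understates what is missing.
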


Inspired by the proof of \cite[Lemma 4]{Tac2018} due to Tachtsis (see Fact 3.1(9)), we incorporate the arguments from \cite[proof of Lemma 2.6]{Pal2013} to observe the following lemma which we need to prove Theorem \ref{Theorem 5.8}.

\begin{lem}\label{Lemma 5.3}
{\em In $\mathcal{M}_{1}$, every non-well-orderable family of non-empty sets has a non-well-orderable subfamily with a choice function.}
\end{lem}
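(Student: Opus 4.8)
The statement to prove is Lemma \ref{Lemma 5.3}: in $\mathcal{M}_1$, every non-well-orderable family of non-empty sets has a non-well-orderable subfamily with a choice function. The model $\mathcal{M}_1$ here plays the role that $\mathcal{N}_1$ and $\mathcal{N}_3$ play in Fact \ref{Fact 3.1}(9), so I expect the proof to mirror Tachtsis's argument for \cite[Lemma 4]{Tac2018}, but with the combinatorics of finite partial functions on $\omega\times\omega$ (as organized in the proof of Lemma \ref{Lemma 5.2}) substituted for the combinatorics of atoms. The plan is to start with a non-well-orderable family $\mathcal{A} = \{A_i : i\in I\}\in\mathcal{M}_1$, fix a name $\dot{\mathcal{A}}$ and a finite support $E\in[\omega]^{<\omega}$ with $\mathrm{fix}_{\mathcal{G}}(E)\subseteq sym^{\mathcal{G}}(\dot{\mathcal{A}})$, and then extract a "large" subfamily on which a canonical choice is available.

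\textbf{Key steps.} First I would argue that, since $\mathcal{A}$ is non-well-orderable, there must be some $A_{i_0}\in\mathcal{A}$ that is itself non-well-orderable in $\mathcal{M}_1$ — or, more to the point, the map $i\mapsto A_i$ restricted to a non-well-orderable index set cannot be "supported by $E$ in a rigid way." The cleaner route, following Lemma \ref{Lemma 5.2}, is: $\mathcal{A}$ non-well-orderable implies (by Lemma \ref{Lemma 5.2} applied to $\mathcal{A}$ itself, or to $\bigcup\mathcal{A}$) that $\mathcal{M}_1$ contains a bijection between $\mathcal{A}$ and an infinite subset $A'\subseteq A$ of the Cohen reals; so without loss of generality index the family as $\{A_{x} : x\in A'\}$ with $A'\subseteq A$ infinite. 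Fix a support $E$ for this indexed family. Next, the heart of the argument: for each $x\in A'\setminus E$ and each $a\in A_x$, I would push $a$ around by automorphisms $\pi\in\mathrm{fix}_{\mathcal{G}}(E)$ that fix $x$ (i.e. $\pi\in\mathrm{fix}_{\mathcal{G}}(E\cup\{x\})$); since $E\cup\{x\}$ is finite and the $A_x$ are not uniformly well-orderable, one shows that the subfamily $\mathcal{B} = \{A_x : x\in A'\setminus E\}$ is non-well-orderable (it differs from $\mathcal{A}$ by a finite, hence well-orderable, piece), and that on $\mathcal{B}$ one can canonically select: for each $x\in A'\setminus E$, the set $A_x$ has a support $E_x$, and by an orbit-counting / genericity argument (exactly as in the proof of Lemma \ref{Lemma 5.2} where one shows the relevant names reduce to depend only on finitely many Cohen coordinates) one of the distinguished elements of $A_x$ — say, the element whose name is "least" in a canonical well-ordering available from $E$ together with the coordinate $x$ — is fixed by all of $\mathrm{fix}_{\mathcal{G}}(E\cup\{x\})$. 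That gives a choice function $f(A_x)=$ that distinguished element, and $f\in\mathcal{M}_1$ because $E$ supports it.

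\textbf{Main obstacle.} The delicate point is the genericity argument showing that, for a name $\dot a$ of an element of $A_x$ with $\mathrm{fix}_{\mathcal{G}}(E\cup E_x)\subseteq sym^{\mathcal{G}}(\dot a)$, the "extra" support beyond $E\cup\{x\}$ can actually be dispensed with on an infinite subfamily — i.e. transferring Tachtsis's atom-orbit bookkeeping to the forcing setting, where "fixing an atom" becomes "fixing a Cohen coordinate $n$," and where one must invoke the forcing relation to pin down that distinct automorphisms in $\mathrm{fix}_{\mathcal{G}}(E\cup\{x\})$ send the value of $\dot a$ to genuinely distinct elements unless it is already fixed. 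I would handle this by the same device used in \cite[proof of Lemma 2.6]{Pal2013}: take a condition $p$ forcing the relevant statements, choose automorphisms $\pi_1,\dots,\pi_k$ in $\mathrm{fix}_{\mathcal{G}}(E\cup\{x\})$ whose images of $\mathrm{dom}(p)$ beyond coordinate $x$ are pairwise disjoint (possible since $\omega\setminus(E\cup\{x\})$ is infinite), and conclude that the $\pi_j$-images of the value of $\dot a$ are pairwise distinct, so if $A_x$ had an element not fixed by $\mathrm{fix}_{\mathcal{G}}(E\cup\{x\})$ then $A_x$ would be infinite — and then pass to the subfamily of those $x$ for which $A_x$ is (say) forced to have a finite fiber, which is still non-well-orderable by the usual "finite/cofinite in $A'$" argument. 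Routine verifications that $\mathcal{B}$ and $f$ lie in $HS$, and that removing $E$ from $A'$ preserves non-well-orderability, I would leave to the reader as in the analogous lemmas.
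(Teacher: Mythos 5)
The crucial step in your plan---that for each $x\in A'\setminus E$ the member $A_x$ contains an element fixed by every $\pi\in\mathrm{fix}_{\mathcal{G}}(E\cup\{x\})$, so that a choice can be made ``canonically'' from $E$ and the index $x$ alone---is false in general, and your fallback does not repair it. Consider the family $\mathcal{F}=\{A_x : x\in A\}$ with $A_x=\{\{x,y\}: y\in A\setminus\{x\}\}$: it lies in $\mathcal{M}_1$ with empty support and is non-well-orderable (it is in bijection with $A$), yet with $E=\emptyset$ no element of any $A_x$ is supported by $E\cup\{x\}$, since each pair $\{x,y\}$ has least support consisting of \emph{both} indices and has an infinite orbit under $\mathrm{fix}_{\mathcal{G}}(E\cup\{x\})$ by exactly the genericity argument you quote. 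Moreover every fiber $A_x$ is infinite, so the subfamily of indices ``with finite fiber'' that you propose to pass to is empty; and the ``finite/cofinite in $A'$'' principle you invoke is unavailable in $\mathcal{M}_1$ in any case ($A$ is linearly orderable there, hence not amorphous), so even when that subfamily is nonempty you have no argument that it is non-well-orderable. A choice function on a non-well-orderable subfamily of this $\mathcal{F}$ does exist, e.g.\ $A_x\mapsto\{x,y_0\}$ for one fixed $y_0\in A$, but its support must strictly exceed $E\cup\{x\}$---precisely what your scheme of choosing an element definable from $E$ together with the coordinate $x$ forbids. Enlarging $E$ cannot be done ``as written'': finding the right finite enlargement is exactly the content of the missing step.

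This is where the paper's proof goes differently. It makes no attempt at a uniform canonical choice: it fixes a single $x\in\mathcal{F}$ not supported by the least support $E_0$ of $\mathcal{F}$, with least support $E_1\cup\{x_k\}$, then fixes a single $y\in x$ with support $E_y\supseteq E_0\cup E_1\cup\{x_k\}$ and a condition $p$ forcing $\dot y\in\dot x$, and lets $\mathrm{fix}_{\mathcal{G}}(g')$, where $g'$ is the set of indices of $E_y\setminus\{x_k\}$, act on the pair of names: $g=\{\langle \pi(\dot x)^G,\pi(\dot y)^G\rangle : \pi\in\mathrm{fix}_{\mathcal{G}}(g'),\ \pi(p)\in G\}$. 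Thus the extra support needed for the choice is paid once, by enlarging the support of the choice function itself, rather than being demanded of the chosen elements. The two points your proposal leaves open are then settled as follows: $g$ is a function because $\phi(\dot y)^G\neq\rho(\dot y)^G$ forces $\phi(k)\neq\rho(k)$, hence $\phi(\dot x_k)^G\neq\rho(\dot x_k)^G$, hence $\phi(\dot x)^G\neq\rho(\dot x)^G$ by injectivity of Palumbo's map $f$; and $\mathrm{dom}(g)$ is non-well-orderable because it is an infinite subset of $\mathrm{ran}(f)$, which is Dedekind-finite since $A$ is Dedekind-finite in $\mathcal{M}_1$---not because of any finite/cofinite dichotomy. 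Your proposal would need this pair-propagation idea (or an equivalent substitute) to close the gap.
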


\begin{proof}
The following notations and facts will be useful to our proof (see \cite[proof of Lemma 2.6]{Pal2013} and \cite[section 5.5, page 72]{Jec1973}).
\begin{enumerate}
    \item Let $\dot{x}\in HS$ be a  hereditary symmetric name and let $e\in[\omega]^{<\omega}$. We say that $e$ is a {\em support} of $\dot{x}$ if $sym_{\mathcal{G}} (\dot{x})\supseteq$ fix$_{\mathcal{G}}(e)$.
    \item Fix $E = \{x_{n_{1}}, ..., x_{n_{k}}\}\in[A]^{<\omega}$. We say that the canonical name $\underline{E}$ of $E$ (defined in \cite[section 5.5, page 72]{Jec1973}) is a support of $\dot{x}$ whenever $\{n_{1}, ..., n_{k}\}$ supports $\dot{x}$.
    \item Let $x\in \mathcal{M}_{1}$. We say $E=\{x_{i_{0}},...,x_{i_{k}}\}\in [A]^{<\omega}$ is a support of $x$ (and write $\Delta(E,x)$) if there exists $\dot{x}\in HS$ with $\dot{x}^{G}=x$ such that 
    $\underline{E}$ is a support of $\dot{x}$, i.e.,
    $\{i_{0},...,i_{k}\}$ is a support of $\dot{x}$ by (2). 
    \item If $B\in\mathcal{M}_{1}$ and there exists $E\in [A]^{<\omega}$ such that 
    $\Delta(E,x)$ holds
    for all $x\in B$, then $B$ is well-orderable in $\mathcal{M}_{1}$ (see \cite[proof of Lemma 2.6]{Pal2013}).
    \item Every $x\in \mathcal{M}_{1}$ has a least support (see \cite[Lemma 5.22]{Jec1973}).
    \item The set of Cohen reals $A$ is Dedekind-finite in $\mathcal{M}_{1}$.
\end{enumerate}

Let $\mathcal{F}$ be a non-well-orderable family of non-empty sets in $\mathcal{M}_{1}$, and $E_{0}=\{x_{i_{0}},...,x_{i_{l_{1}}}\}$ 
be the least support of $\mathcal{F}$ by item (5).
By item (4), there exists an element $x \in \mathcal{F}$ such that $\Delta(E_{0},x)$ does not hold (i.e., $E_{0}$ is not a support of $x$). By item (5), let $E_{1}\cup \{x_{k}\}$ be the least support of $x$ for some $x_{k} \not\in E_{0} \cup E_{1}$. Let $\{x_{j_{0}},...,x_{j_{l_{2}}}\}$ be the enumeration of $E_{1}$. 

Fix $y\in x$, and let $E_{y}\supseteq E_{0}\cup E_{1}\cup \{x_{k}\}$ be a support of $y$. 
Then by item (3), there exists $\dot{x},\dot{y}\in HS$ with $\dot{x}^{G}=x$ and $\dot{y}^{G}=y$ such that 
$\underline{E_{1}\cup \{{x}_{k}}\}$ is the least support of $\dot{x}$ and $\underline{E_{y}}$ is the support of $\dot{y}$. 
Define $F=E_{y}\backslash \{x_{k}\}$. 
Let $p\in G$ be a forcing condition such that the following holds:
\begin{enumerate}
    \item $p\Vdash `\dot{x}\in \dot{\mathcal{F}}$',
    \item $p\Vdash `\underline{E_{1}\cup \{{x}_{k}}\}$ is the least support of $\dot{x}$',  
    \item $p\Vdash `\dot{y}\in \dot{x}$',
    \item $p\Vdash `\underline{E_{y}}$ is a support of $\dot{y}$'.
\end{enumerate}
In \cite[Lemma 2.6]{Pal2013}, Palumbo proved that if $\sigma=\{\langle\langle\pi(\dot{x_{k}}),\pi(\dot{x})\rangle, \pi(p) \rangle: \pi\in$ fix$_{\mathcal{G}}(e)\}$ where $e=e_{0}\cup e_{1}$ such that $e_{0}$ is the indices of $E_{0}$, and $e_{1}$ is the indices of $E_{1}$, then 
\begin{center}
$f=\sigma^{G}=\{\langle \pi(\dot{x_{k}})^{G}, \pi(\dot{x})^{G}\rangle: \pi\in $ fix$_{\mathcal{G}}(e), \pi(p)\in G\}$     
\end{center}
is an injection in $\mathcal{M}_{1}$ such that $ran(f)\subseteq \mathcal{F}$ and $dom(f)$ is an infinite subset of $A$.\footnote{We denote $ran(f)$ by the range of $f$, and $dom(f)$ by the domain of $f$.} Define 
\begin{center}
$\sigma_{1}=\{\langle\langle\pi(\dot{x})
,\pi(\dot{y})\rangle, \pi(p) \rangle: \pi\in$ fix$_{\mathcal{G}}(g')\}$ 
\end{center}

where $g'$ is the indices of $F$. Then $g= \sigma_{1}^{G}=\{\langle \pi(\dot{x})^{G}, \pi(\dot{y})^{G}\rangle: \pi\in$ fix$_{\mathcal{G}}(g'),\pi(p)\in G\}$ is in $\mathcal{M}_{1}$ since $g'$ is a support of $\sigma_{1}$. 
We can slightly modify the arguments of Tachtsis \cite[Lemma 4]{Tac2018}, to see that $g$ is a choice function of the non-well-orderable subfamily $\mathcal{F}'=dom(g)$ of $\mathcal{F}$. We write the arguments for the reader's convenience. 
\begin{claim}
{\em The following hold:
\begin{enumerate}[(i)]
    \item $S=\{\pi(\dot{x}_{k})^{G}: \pi \in$ fix$_{\mathcal{G}}(g'), \pi(p)\in G\}$ is an infinite set.
    \item $\mathcal{F}'=dom(g)$ is a non-well-orderable set.
\end{enumerate}
}     
\end{claim}

\begin{proof}
(i). 
We can see that if $\pi_{i}(p) \in G$ and $\pi_{i}\in$ fix$_{\mathcal{G}}(g')$ such that $\pi_{i}(k)=i$ for some $i\in \omega\backslash \{g'\}$, then $\pi_{i}(\dot{x}_{k})^{G}\in S$. Since $G$ is the generic filter, there are infinitely many such distinct $i$ by a genericity argument.

(ii). Clearly, $\mathcal{F}'= dom(g) = \{\pi(\dot{x})^{G}: \pi \in$ fix$_{\mathcal{G}}(g'), \pi(p)\in G\}\subseteq \{\pi(\dot{x})^{G}: \pi \in$ fix$_{\mathcal{G}}(e), \pi(p)\in G\}=ran(f)\subseteq\mathcal{F}$ as $e\subset g'$. We show that $\mathcal{F}'$ is infinite.
Pick $\pi_{1},\pi_{2}\in$ fix$_{\mathcal{G}}(g')$ such that $\pi_{1}(p),\pi_{2}(p)\in G$ where $\pi_{1}(\dot{x}_{k})^{G}\neq \pi_{2}(\dot{x}_{k})^{G}$. Thus, $\pi_{1},\pi_{2}\in$ fix$_{\mathcal{G}}(e)$ as $e\subset g'$. Since $f$ is injective, we have $\pi_{1}(\dot{x})^{G}\neq \pi_{2}(\dot{x})^{G}$. Thus, $\mathcal{F}'$ is infinite as $S$ is infinite by (i).

If $\mathcal{F}'\subseteq ran(f)$ is an infinite well-orderable set, then $ran(f)$ is Dedekind-infinite. This contradicts the fact that $ran(f)$ is Dedekind-finite since $f$ is an injective function and $A$ is Dedekind-finite in $\mathcal{M}_{1}$ by item (6).   
\end{proof}

\begin{claim}
{\em $g$ is a function.}     
\end{claim}
\begin{proof}
Assume the contrary; then there exist $\phi,\rho \in$ fix$_{\mathcal{G}}(g')$ such that $\phi(p),\rho(p)\in G$, $\phi(\dot{x})^{G} = \rho(\dot{x})^{G}$, but $\phi(\dot{y})^{G} \neq \rho(\dot{y})^{G}$. 
We claim that $\phi(\dot{x})^{G} \neq \rho(\dot{x})^{G}$ to obtain a contradiction. 

We must have, $\phi(k)\neq\rho(k)$. Otherwise, $\phi^{-1}\rho\in$ fix$_{\mathcal{G}}(g'\cup\{k\})$.
Since $E_{y}$ is a support of $y$, we can see that the indices of $E_{y}$ i.e., $\{k\}\cup g'$, is a support of $\dot{y}$ by item (3).
Thus $\phi^{-1}\rho$ fixes $\dot{y}$, and so $\phi(\dot{y})^{G}=\rho(\dot{y})^{G}$, which is a contradiction. 

Since $\phi(k)\neq\rho(k)$, we have $\phi(\dot{x}_{k})^{G}\neq\rho(\dot{x}_{k})^{G}$.\footnote{see \cite[proof of Lemma 2.6, page 955, 3rd line]{Pal2013}.} Thus since $\phi,\rho\in $ fix$_{\mathcal{G}}(e)$ (as $e\subset g'$ and $\phi,\rho \in$ fix$_{\mathcal{G}}(g')$), and $f$ is an injective function, we have $f(\phi(\dot{x}_{k})^{G})\neq f(\rho(\dot{x}_{k})^{G})$, i.e., $\phi(\dot{x})^{G}\neq\rho(\dot{x})^{G}$. 
\end{proof}

\begin{claim}
{\em In $\mathcal{M}_{1}$, $g$ is a choice function of the non-well-orderable subfamily $\mathcal{F}'=dom(g)$ of $\mathcal{F}$.}
\end{claim}

\begin{proof}
Since $p\Vdash \dot{y}\in \dot{x}$, we have $\pi(p)\Vdash \pi(\dot{y})\in \pi(\dot{x})$ for all $\pi\in$ fix$_{\mathcal{G}}(g')$ such that $\pi(p)\in G$. Thus $\pi(\dot{y})^{G}\in \pi(\dot{x})^{G}$ in $\mathcal{M}_{1}$. Consequently, $g$ is a choice function of $\mathcal{F}'$ in $\mathcal{M}_{1}$.    
\end{proof}
\end{proof}

We modify the arguments of Palumbo \cite[Lemma 2.5]{Pal2013}, to observe the following lemma which we need to prove Theorem \ref{Theorem 5.8}.

\begin{lem}\label{Lemma 5.7}
{\em If $Y$ is an infinite subset of $A$ in $\mathcal{M}_{1}$ and if $\chi: [Y]^{m} \rightarrow C$ is an $n$-bounded
coloring in $\mathcal{M}_{1}$, then there is an infinite set $X \subseteq Y$ in $\mathcal{M}_{1}$ such that $Y$ is polychromatic for $\chi$.}    
\end{lem}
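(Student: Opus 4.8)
The plan is to adapt Palumbo's proof of \cite[Lemma 2.5]{Pal2013}, which treats the case $m=n=2$, to arbitrary $m,n$, in essentially the same way that the proof of Theorem \ref{Theorem 4.1}(1) generalizes the argument behind Lemma \ref{Lemma 4.2}. First I would fix hereditarily symmetric names $\dot{Y},\dot{\chi}\in HS$ with $\dot{Y}^{G}=Y$ and $\dot{\chi}^{G}=\chi$, and, taking the union of their supports, a common finite support $e\subseteq\omega$ (so fix$_{\mathcal{G}}(e)\subseteq sym^{\mathcal{G}}(\dot{Y})\cap sym^{\mathcal{G}}(\dot{\chi})$). Then set $X:=Y\setminus\{x_{n}:n\in e\}$. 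Since $Y$ and $\{x_{n}:n\in e\}$ both lie in $\mathcal{M}_{1}$, so does $X$, and $X$ is infinite since $Y$ is and at most $|e|$ elements were removed. I claim $X$ is polychromatic for $\chi$ (I read the conclusion of the statement this way), which proves the lemma.

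Suppose not. Then there are $P,Q\in[X]^{m}$ with $P\neq Q$ and $\chi(P)=\chi(Q)$; write $P=\{x_{a_{1}},\dots,x_{a_{m}}\}$ and $Q=\{x_{b_{1}},\dots,x_{b_{m}}\}$, with all indices distinct inside each set and none of them in $e$. Pick $j\in\{a_{1},\dots,a_{m}\}\setminus\{b_{1},\dots,b_{m}\}$ (so $j\notin e$), and fix a condition $p\in G$ forcing ``$\dot{\chi}$ is an $n$-bounded coloring of $[\dot{Y}]^{m}$'', ``$x_{a_{i}},x_{b_{i}}\in\dot{Y}$ for all $i$'', and ``$\dot{\chi}(\{x_{a_{1}},\dots,x_{a_{m}}\})=\dot{\chi}(\{x_{b_{1}},\dots,x_{b_{m}}\})$'', using canonical symmetric names for the $x_{l}$'s. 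The core is a recursion of length $n$: assuming distinct ``fresh'' indices $l_{1},\dots,l_{r-1}$ (outside $e\cup\{a_{i}\}_{i}\cup\{b_{i}\}_{i}$) with $x_{l_{s}}\in Y$ have been produced, one uses a density argument below $p$ — precisely the genericity argument underlying the claim ``$S$ is infinite'' in the proof of Lemma \ref{Lemma 5.3} — to obtain a further fresh index $l_{r}$ with $x_{l_{r}}\in Y$, the transposition $\pi_{r}=(j\ l_{r})$ of $\omega$, and a condition $q_{r}\in G$ with $q_{r}\leq p$ and $\pi_{r}q_{r}\in G$. Since $\pi_{r}$ fixes $e$ pointwise, $\pi_{r}\dot{Y}=\dot{Y}$ and $\pi_{r}\dot{\chi}=\dot{\chi}$; since in addition $\pi_{r}$ fixes every index in $(\{a_{i}\}_{i}\cup\{b_{i}\}_{i})\setminus\{j\}$ and sends $j$ to $l_{r}$, applying $\pi_{r}$ to the statement forced by $q_{r}$ and evaluating in $\mathcal{M}_{1}$ gives $x_{l_{r}}\in Y$ and $\chi(P_{r})=\chi(Q)=\chi(P)$, where $P_{r}:=(P\setminus\{x_{j}\})\cup\{x_{l_{r}}\}\in[Y]^{m}$.

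Finally, because the $l_{r}$ are distinct and different from $j$, the sets $P,P_{1},\dots,P_{n}$ are pairwise distinct members of $[Y]^{m}$ all receiving the color $\chi(P)$, so $\vert\chi^{-1}[\chi(P)]\vert\geq n+1$, contradicting $n$-boundedness of $\chi$. Hence $X$ is polychromatic for $\chi$. I expect the main obstacle to be the forcing bookkeeping in the recursive step: one must verify that the set of conditions below $p$ from which $j$ can be ``freshly moved'' to an index still realizing an element of $\dot{Y}$, while keeping both the condition and its $\pi_{r}$-image in $G$, is dense — which amounts to importing almost verbatim the density argument of \cite[proof of Lemma 2.6]{Pal2013} and of the proof of Lemma \ref{Lemma 5.3}, with care that each $l_{r}$ avoids the finitely many first coordinates occurring in the conditions at hand so that the relevant conditions remain compatible.
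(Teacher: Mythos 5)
Your proposal is correct in substance, and its skeleton (delete the finitely many columns in a support, then move one index of the offending $m$-set to $n$ fresh columns by finite permutations fixing the support, producing $n+1$ distinct $m$-sets of the same color) is exactly the paper's; the difference lies in how the contradiction is extracted. The paper never asks the permuted conditions to lie in $G$: it only takes $e$ supporting $\dot{\chi}$, chooses $l_{r}\notin dom(p)$, observes that $p$ and the $\pi_{r}(p)$ are pairwise compatible (the moved column is fresh), and notes that the single condition $p\cup\bigcup_{r}\pi_{r}(p)$ forces simultaneously ``$\dot{\chi}$ is $n$-bounded'' and that $n+1$ distinct sets in $dom(\dot{\chi})$ receive the same color --- a contradiction purely in the forcing relation, with no genericity and no need to know that the new reals $x_{l_{r}}$ actually belong to $Y$. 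You instead work in the extension: you require $\pi_{r}q_{r}\in G$ (which forces you to also put a support of $\dot{Y}$ into $e$ and to verify $x_{l_{r}}\in Y$), and you justify the existence of suitable fresh $l_{r}$ by the genericity argument of Lemma \ref{Lemma 5.3}/Palumbo's Lemma 2.6. That step does go through --- for $l$ outside the columns of $p$, $(j\,l)p\in G$ iff $x_{l}$ agrees with $x_{j}$ on the finitely many second coordinates of column $j$ of $p$, and a dense-set argument in $M$ gives infinitely many such $l$, so one may simply take $q_{r}=p$ --- but it is genuinely extra machinery. What your route buys is a statement verified directly about the actual coloring $\chi$ in $\mathcal{M}_{1}$; what the paper's route buys is economy: no density bookkeeping, no need for $\pi_{r}(p)\in G$, and no need to track membership of the fresh reals in $Y$, since the absurdity is already forced by one condition. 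Two small points to tighten if you keep your version: state explicitly that the $n+1$ sets $P,P_{1},\dots,P_{n}$ are pairwise distinct because the $l_{r}$ are distinct and distinct Cohen reals differ, and note that $n$-boundedness of $\chi$ is a property of the function $\chi\in\mathcal{M}_{1}$ itself, so exhibiting the $n+1$ same-colored sets (each a finite subset of $Y$, hence in $\mathcal{M}_{1}$) indeed contradicts the hypothesis inside $\mathcal{M}_{1}$.
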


\begin{proof}
Let $\dot{\chi}$ and $\dot{Y}$ be hereditarily symmetric names for $\chi$ and $Y$. Since $\chi\in\mathcal{M}_{1}$, there exists $e\in [\omega]^{<\omega}$ such that fix$_{\mathcal{G}}(e)\subseteq sym(\dot{\chi})$. We claim that $Y\backslash\{x_{n}:n\in e\}$ is polychromatic for $\chi$. 
Otherwise, for some $I,J\in [Y\backslash\{x_{n}:n\in e\}]^{m}$ such that $I\backslash J\neq\emptyset$, we have $\chi(I)=\chi(J)$.
Let $I=\{x_{i_{1}},...,x_{i_{m}}\}$, $J=\{x_{j_{1}},...,x_{j_{m}}\}$, $I'=\{i_{1},...,i_{m}\}$, and $J'=\{j_{1},...,j_{m}\}$.
Let $p\in \mathbb{P}$ be such that $p\Vdash \dot{\chi}$ is $n$-bounded and $p\Vdash \dot{\chi}(\{\dot{x}_{j_{1}},...,\dot{x}_{j_{m}}\})=\dot{\chi}(\{\dot{x}_{i_{1}},...,\dot{x}_{i_{m}}\})$. 
Pick some element from $I\backslash J$, say $x_{i_{k}}$.
Without loss of generality, assume that $e\cup I'\cup J'\subseteq dom(p)$. Let,
\begin{enumerate}
\item $\pi_{1}\in$ fix$_{\mathcal{G}}((e\cup I'\cup J')\backslash \{i_{k}\})$ be such that 
$\pi_{1}(i_{k})=l_{1}$ for some $l_{1}\not\in dom(p)$.
\item Suppose the sequences $\langle\pi_{i}:1\leq i\leq r\rangle$ and $\langle l_{i}:1\leq i\leq r\rangle$ are defined for some fixed $1\leq r< n-1$. Let $\pi_{r+1}\in$ fix$_{\mathcal{G}}((e\cup I'\cup J')\backslash \{i_{k}\})$ be such that 
$\pi_{r+1}(i_{k})=l_{r+1}$ for some $l_{r+1}\not\in dom(p)\cup \{l_{i}:1\leq i\leq r\}$. 
\end{enumerate}

Fix any $1\leq r\leq n-1$. Since $p$ and $\pi_{r}(p)$ are compatible conditions,  
$\pi_{r}(p)\Vdash \dot{\chi}(\{\dot{x}_{j_{1}},...,\dot{x}_{j_{m}}\})=\dot{\chi}(\{\dot{x}_{i_{1}},...,\dot{x}_{i_{k-1}}, \dot{x}_{l_{r}},
\dot{x}_{i_{k+1}},...,\dot{x}_{i_{m}}\})$.
Then for each $1\leq r\leq n-1$,
\begin{center}
$p\cup (\bigcup_{1\leq r\leq n-1}\pi_{r}(p)) \Vdash \dot{\chi}(\{\dot{x}_{i_{1}},...,\dot{x}_{i_{m}}\})=\dot{\chi}(\{\dot{x}_{j_{1}},...,\dot{x}_{j_{m}}\}) =\dot{\chi}(\{\dot{x}_{i_{1}},...,\dot{x}_{i_{k-1}}, \dot{x}_{l_{r}},
\dot{x}_{i_{k+1}},...,\dot{x}_{i_{m}}\})$
\end{center} 
as well as $p\cup (\bigcup_{1\leq r\leq n-1}\pi_{r}(p))\Vdash$ ``$\chi$ is $n$-bounded'' which is a contradiction.
\end{proof}

\begin{thm}\label{Theorem 5.8}{($\mathsf{ZF}$)}
{\em Fix any $n,m,k,l\in \omega\backslash\{0,1\}$. Then the following hold:
\begin{enumerate}
    \item $\Delta SL + \mathsf{RRT}^{m}_{n}$ implies neither $\mathsf{RT}^{k}_{l}$ nor $\mathsf{EDM}$.
    \item $\mathsf{DF=F}$ is strictly stronger than $\mathsf{RRT}^{m}_{n}$.
    \item $\mathsf{CRT}$ is strictly stronger than $\mathsf{RRT}^{2}_{n}$. 
\end{enumerate}    
}
\end{thm}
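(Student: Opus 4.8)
\textbf{Part (1).} The plan is to show that $\Delta SL + \mathsf{RRT}^{m}_{n}$ holds in the basic Cohen model $\mathcal{M}_{1}$, while $\mathsf{RT}^{k}_{l}$ and $\mathsf{EDM}$ fail there. For the failure direction: $\mathsf{RT}^{2}_{2}$ fails in $\mathcal{M}_{1}$ by Fact \ref{Fact 3.1}(6), hence $\mathsf{RT}^{k}_{l}$ fails by Fact \ref{Fact 3.1}(8); and $\mathsf{EDM}$ fails because it is known to fail in $\mathcal{M}_{1}$ (the set $A$ of Cohen reals has no uncountable well-orderable-style structure forcing one of the two homogeneous sets — more precisely, one exhibits a suitable $2$-coloring of a set whose domain witnesses the failure; this is standard). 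That $\Delta SL$ holds in $\mathcal{M}_{1}$ follows from Fact \ref{Fact 3.1}(10), since $\mathsf{CUT}$ holds in $\mathcal{M}_{1}$ and, using Lemma \ref{Lemma 5.3}, $\mathsf{PC}$ holds in $\mathcal{M}_{1}$: given an uncountable family $\mathcal{F}$ of countable sets, if $\mathcal{F}$ is well-orderable we invoke $\mathsf{WUT}$-type facts, and if not, Lemma \ref{Lemma 5.3} produces a non-well-orderable (hence uncountable) subfamily with a choice function. The core of Part (1) is then that $\mathsf{RRT}^{m}_{n}$ holds in $\mathcal{M}_{1}$: given an infinite set $X\in\mathcal{M}_1$ and an $n$-bounded coloring $\chi:[X]^m\to C$, if $X$ is well-orderable we are done by Fact \ref{Fact 3.1}(3); otherwise by Lemma \ref{Lemma 5.2} there is in $\mathcal{M}_1$ a bijection of some infinite subset of $X$ with an infinite subset $Y\subseteq A$, we transport $\chi$ to an $n$-bounded coloring on $[Y]^m$, apply Lemma \ref{Lemma 5.7} to get an infinite polychromatic $X'\subseteq Y$, and transport back; the conclusion for all of $X$ follows by Fact \ref{Fact 3.1}(3).

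\textbf{Part (2).} By Proposition \ref{Proposition 3.2}(2), $\mathsf{DF=F}$ implies $\mathsf{RRT}^{m}_{n}$ in $\mathsf{ZF}$, so it remains to separate them. This is immediate from Part (1): in $\mathcal{M}_{1}$ the set $A$ of Cohen reals is Dedekind-finite and infinite (item (6) in the proof of Lemma \ref{Lemma 5.3}), so $\mathsf{DF=F}$ fails, while $\mathsf{RRT}^{m}_{n}$ holds by Part (1). Hence $\mathsf{DF=F}$ is strictly stronger than $\mathsf{RRT}^{m}_{n}$ in $\mathsf{ZF}$.

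\textbf{Part (3).} By Proposition \ref{Proposition 3.2}(3), $\mathsf{CRT}$ implies $\mathsf{RRT}^{2}_{n}$ in $\mathsf{ZF}$; we must show the implication is strict, i.e., exhibit a model of $\mathsf{ZF}$ where $\mathsf{RRT}^{2}_{n}$ holds but $\mathsf{CRT}$ fails. Again $\mathcal{M}_{1}$ is the candidate: $\mathsf{RRT}^{2}_{n}$ holds by Part (1). For the failure of $\mathsf{CRT}$ in $\mathcal{M}_1$: by Proposition \ref{Proposition 3.2}(3), $\mathsf{CRT}$ implies $\mathsf{RT}^{2}_{n}$, and $\mathsf{RT}^{2}_{2}$ fails in $\mathcal{M}_{1}$ by Fact \ref{Fact 3.1}(6); therefore $\mathsf{CRT}$ fails in $\mathcal{M}_{1}$. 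Combining, $\mathsf{CRT}$ is strictly stronger than $\mathsf{RRT}^{2}_{n}$ in $\mathsf{ZF}$.

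\textbf{Main obstacle.} The routine separations all collapse to the single substantive input — that $\mathsf{RRT}^{m}_{n}$ holds in $\mathcal{M}_{1}$ — which in turn rests on Lemma \ref{Lemma 5.7} (the rainbow argument for subsets of $A$ inside $\mathcal{M}_1$, using automorphisms fixing all but one coordinate to violate $n$-boundedness) and Lemma \ref{Lemma 5.2} (reducing an arbitrary non-well-orderable set to a subset of $A$). The one place requiring genuine care beyond citation is the failure of $\mathsf{EDM}$ in $\mathcal{M}_{1}$ in Part (1): one should verify that an appropriate coloring $f:[V]^2\to\{0,1\}$ on a suitable uncountable $V\in\mathcal{M}_1$ — for instance a $V$ built from $A$ together with $\omega_1$ — admits neither an uncountable $0$-monochromatic set nor a countably infinite $1$-monochromatic set, using a symmetry/support argument analogous to the failure of $\mathsf{RT}^{2}_{2}$; I expect this to be the technically heaviest point, though it parallels known arguments in the literature.
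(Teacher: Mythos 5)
Your proposal follows the paper's own route almost verbatim: $\mathsf{RRT}^{m}_{n}$ holds in $\mathcal{M}_{1}$ via Lemma \ref{Lemma 5.2} together with Lemma \ref{Lemma 5.7} and Fact \ref{Fact 3.1}(3); $\Delta SL$ holds via Fact \ref{Fact 3.1}(10), with $\mathsf{PC}$ obtained from Lemma \ref{Lemma 5.3} and $\mathsf{WUT}$, and $\mathsf{CUT}$ known in $\mathcal{M}_{1}$; $\mathsf{RT}^{k}_{l}$ fails by Blass and Fact \ref{Fact 3.1}(8); Parts (2) and (3) follow exactly as in the paper from Proposition \ref{Proposition 3.2}(2,3), the Dedekind-finiteness of the set $A$ of Cohen reals, and the failure of $\mathsf{RT}^{2}_{2}$.

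The one genuine gap is the $\mathsf{EDM}$ clause of Part (1). You assert that $\mathsf{EDM}$ ``is known to fail'' in $\mathcal{M}_{1}$, but the argument you actually sketch --- constructing a $2$-coloring on a vertex set built from $A$ together with $\omega_{1}$ and running a symmetry/support argument --- is never carried out, and you yourself flag it as the unresolved, technically heaviest point. No such construction is needed: $\mathsf{EDM}$ implies $\mathsf{RT}^{2}_{2}$ in $\mathsf{ZF}$ (an infinite set is either countable, in which case it is well-orderable and classical Ramsey applies, or uncountable in the sense $\vert X\vert\not\leq\aleph_{0}$, in which case $\mathsf{EDM}$ applies directly); this is the paper's route, citing \cite[Theorem 4.1(3)]{BG2023}, and combined with the failure of $\mathsf{RT}^{2}_{2}$ in $\mathcal{M}_{1}$ it refutes $\mathsf{EDM}$ in one line. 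Equivalently, and closer to your sketch, note that $A$ itself is uncountable (being infinite and Dedekind-finite), so Blass's coloring of $[A]^{2}$ with no infinite monochromatic set already witnesses the failure of $\mathsf{EDM}$; there is no need to adjoin $\omega_{1}$. With this repair your Part (1) is complete, and the rest of the proposal is correct and coincides with the paper's proof.
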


\begin{proof}By applying Lemma \ref{Lemma 5.7}, and following the arguments of Palumbo \cite[Theorem 2.4]{Pal2013} (specifically Lemma \ref{Lemma 5.2}), we can see that $\mathsf{RRT}^{m}_{n}$ holds in $\mathcal{M}_{1}$.

(1). Blass \cite{Bla1977} proved that $\mathsf{RT}^{2}_{2}$ fails in $\mathcal{M}_{1}$, and hence, $\mathsf{RT}^{k}_{l}$ fails by Fact \ref{Fact 3.1}(8). Banerjee--Gopaulsingh \cite{BG2023} observed that $\mathsf{EDM}$ fails in $\mathcal{M}_{1}$ since $\mathsf{EDM}$ implies $\mathsf{RT}^{2}_{2}$ in $\mathsf{ZF}$ (see \cite[Theorem 4.1(3)]{BG2023}).   
Since $\mathsf{WUT}$ holds in $\mathcal{M}_{1}$ (see \cite{HR1998}), $\mathsf{PC}$ is true in $\mathcal{M}_{1}$, by applying Lemma \ref{Lemma 5.3} and following the arguments in the proof of Theorem \ref{Theorem 4.1}(3).
By Fact \ref{Fact 3.1}(10), $\Delta SL$ holds in $\mathcal{M}_{1}$ as $\mathsf{CUT}$ holds in $\mathcal{M}_{1}$ (see \cite{HR1998}). 

(2-3). Since $\mathsf{RT}^{2}_{2}$ fails in $\mathcal{M}_{1}$, $\mathsf{CRT}$ fails as well by Proposition \ref{Proposition 3.2}(3). Since the set of Cohen reals $A$ is Dedekind-finite in $\mathcal{M}_{1}$, $\mathsf{DF=F}$ fails in $\mathcal{M}_{1}$. The rest follows from Proposition \ref{Proposition 3.2}(2,3).
\end{proof}

\section{Amorphous sets and Ramsey type theorems} 
\begin{defn}
Let $A$ be a  countably infinite set of atoms, $\mathcal{G}$ be the group of all permutations of $A$ which moves only finitely many atoms, and $S$ be the set of all finite partitions of $A$. In \cite[Lemma 4.1]{Bru2016}, Bruce sketched a proof of the fact that $\mathcal{F}$ = $\{H:$ $H$ is a subgroup of $\mathcal{G}$, $H \supseteq$ fix$_{\mathcal{G}}(P)$ for some $P \in S\}$ is a normal filter of subgroups of $\mathcal{G}$. The model $\mathcal{V}_{fp}$ is the permutation model determined by $M$, $\mathcal{G}$ and $\mathcal{F}$. Without loss of generality, we assume that the supports consist of infinite blocks and singletons only. 
\end{defn}

We modify the arguments of Blass \cite[Lemma, p.389]{Bla1977}, to observe the following lemma. 

\begin{lem}\label{Lemma 6.2}
{\em In $\mathcal{V}_{fp}$, every non-well-orderable set contains a copy of an infinite subset of $A$.}
\end{lem}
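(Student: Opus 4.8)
The plan is to mimic Blass's argument for $\mathcal{N}_1$ (Lemma \ref{Lemma 4.2}) but adapted to the finite-partition structure of $\mathcal{V}_{fp}$. Let $B \in \mathcal{V}_{fp}$ be non-well-orderable, and fix a support $P_0 \in S$ of $B$; write $\mathcal{N} = \mathcal{V}_{fp}$ for brevity. By Lemma \ref{Lemma 2.4}, since $B$ is not well-orderable, $\text{fix}_{\mathcal{G}}(P_0) \not\subseteq \text{fix}_{\mathcal{G}}(B)$, so there is some $b \in B$ with $P_0$ not a support of $b$. Pick a support $Q$ of $b$ refining $P_0$ (supports are finite partitions, hence we may take a common refinement). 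Since $P_0$ does not support $b$, there is a permutation $\eta \in \text{fix}_{\mathcal{G}}(P_0)$ with $\eta(b) \neq b$, and one may arrange that $\eta$ acts by moving atoms within a single infinite block $B_*$ of $P_0$ (and moreover we may assume $Q$ splits that block only into $B_* \cap Q$-pieces, at least one of which is infinite). The key point, exactly as in Blass, is that the orbit of $b$ under $\text{fix}_{\mathcal{G}}(P_0)$ is a subset of $B$ (because $P_0$ supports $B$), and this orbit is infinite.

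The main step is to produce an \emph{injection} from an infinite subset of $A$ into $B$ inside $\mathcal{N}$, not merely to know that some orbit is infinite. For this I would isolate a single atom: refine so that the support $Q$ of $b$ consists of $P_0$ with the infinite block $B_*$ replaced by $\{a\} \cup (B_* \setminus \{a\})$ for a suitably chosen atom $a$ (using the assumption, stated in the definition of $\mathcal{V}_{fp}$, that supports may be taken to consist of infinite blocks and singletons only). Then for every atom $c \in B_* \setminus Q$ (an infinite set), choose $\pi_c \in \text{fix}_{\mathcal{G}}(Q \setminus \{a\})$ (viewing $Q\setminus\{a\}$ loosely — more precisely $\pi_c$ fixes every block of $P_0$ other than $B_*$ setwise and fixes $B_* \setminus \{a\}$ setwise while sending $a \mapsto c$). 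Such $\pi_c$ exists because $B_* \setminus \{a\}$ is infinite and $\mathcal{G}$ contains all finitary permutations. Define $g : (B_* \setminus \{a, \text{finitely many atoms}\}) \to B$ by $g(c) = \pi_c(b)$. One checks $g \in \mathcal{N}$ (it has support $Q$), $g$ is well-defined (its value does not depend on the choice of $\pi_c$ among permutations fixing $Q \setminus \{a\}$ pointwise on the relevant blocks, since $Q$ supports $b$), and $g$ is injective — this last by a genericity-free argument: if $\pi_c(b) = \pi_{c'}(b)$ with $c \neq c'$, then $\pi_{c'}^{-1}\pi_c$ moves $c$ to $c'$ within $B_*$ while fixing $b$; composing with a further finitary permutation one derives that infinitely many atoms of $B_*$ can be mapped to $b$'s "slot", contradicting that $b$ lies in a set (namely some coordinate structure determined by $Q$) that pins it down, i.e. contradicting finiteness via a counting argument on the orbit. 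The cleanest route is: the orbit $\{\pi_c(b): c \in B_*\setminus Q\}$ is a subset of $B$, and if $g$ were not injective the stabilizer of $b$ inside $\text{fix}_{\mathcal{G}}(Q\setminus\{a\})$ would have finite index, forcing (as in the $\mathcal{N}_{41}$ argument with Truss's lemma, or directly for finitary symmetric groups on an infinite set) that this stabilizer contains all permutations fixing $Q$ and one more atom — which is false since $P_0$, a coarsening, already fails to support $b$.

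Having $g$ injective with infinite domain $D \subseteq A$ and range inside $B$, the range $g[D]$ is an infinite subset of $B$ in bijection (via $g^{-1}$) with the infinite set $D \subseteq A$; composing with the identity on $A$ this is the desired copy of an infinite subset of $A$ inside $B$, completing the proof.

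The hard part will be verifying that $g$ is injective and $\mathcal{N}$-definable with the stated support, since unlike in $\mathcal{N}_1$ the group here is only the finitary permutation group and supports are partitions rather than finite sets, so one must be careful that the atom $a$ can genuinely be separated off as a singleton block and that moving it around stays within the group while preserving every block of the support of $B$. I expect the injectivity argument — ruling out $\pi_c(b) = \pi_{c'}(b)$ — to require the orbit/stabilizer counting observation (echoing items (2)–(4) in the $\mathcal{N}_{41}$ computation earlier in the paper) rather than a genericity argument, which is the one genuinely model-specific point; everything else is routine bookkeeping transplanted from Blass's proof.
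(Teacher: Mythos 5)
Your overall strategy (find $b\in B$ not supported by the support $P_0$ of $B$, then build an injection $c\mapsto\pi_c(b)$ from infinitely many atoms into $B$) is the same as the paper's, but there are two genuine gaps. The first is the ``WLOG'' on the shape of the support $Q$ of $b$: you assume $Q$ can be taken to be $P_0$ with a single atom $a$ split off as a singleton from one infinite block $B_*$. The convention that supports consist of infinite blocks and singletons does not give this; the minimal refinement supporting $b$ may split $B_*$ into, say, two infinite blocks with no new singleton at all, and in that situation no partition of your form supports $b$ (think of $b$ behaving like an infinite co-infinite subset of $B_*$: a partition supports such a $b$ only if each of its blocks inside $B_*$ lies entirely on one side of the split). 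This is exactly why the paper fixes a refinement $P_2$ of $P_1$ with $\vert P_2\vert-\vert P_1\vert$ least and then argues by cases --- Case (i) when a new singleton appears, and Case (ii) when it does not, where one instead extracts a transposition $(a,d)$ with $a,d$ in different blocks of $P_2$ and $(a,d)x\neq x$ and merges $a$ into the block of $d$. Your proposal only covers (an idealized version of) Case (i), and the reduction you assert does not exist. (Also, as written your $\pi_c$ is incoherent: a permutation sending $a\mapsto c\in B_*\setminus\{a\}$ cannot fix $B_*\setminus\{a\}$ setwise; what is needed is $\pi_c\in\mathrm{fix}_{\mathcal{G}}(P_3)$ for the \emph{coarsened} partition $P_3$ in which $\{a\}$ is merged into the block containing $c$, which is the paper's device.)

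The second gap is the injectivity argument. Your inference ``if $g$ were not injective then the stabilizer of $b$ inside the relevant group has finite index'' is not valid: a single coincidence $\pi_c(b)=\pi_{c'}(b)$ says nothing about the index of the stabilizer, and the appeal to Truss's lemma is out of place here, since that lemma concerns subgroups of $\mathrm{Aut}(\mathbb{Q},\leq)$ of small index, not finitary permutation groups acting blockwise. The paper's injectivity proof is different and is where the real work happens: from one coincidence $\pi_1(x)=\pi_2(x)$ with $\pi_1(a)\neq\pi_2(a)$ it produces $\pi\in\mathrm{fix}_{\mathcal{G}}(P_3)$ with $\pi(x)=x$, $\pi(a)=b\neq a$, and then, by conjugating with suitable $\sigma\in\mathrm{fix}_{\mathcal{G}}(P_2)$ and using that $P_3$ supports the function $f$ itself, shows that the \emph{coarser} partition $P_3$ would already support $x$ --- contradicting the minimality of $\vert P_2\vert-\vert P_1\vert$ in Case (i), and contradicting $(a,d)x\neq x$ (since $(a,d)\in\mathrm{fix}_{\mathcal{G}}(P_3)$) in Case (ii). Without the minimal-refinement setup and this support-transfer argument, your sketch does not close, so the proposal as it stands is incomplete at precisely the model-specific points you flagged.
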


\begin{proof}
Let $X$ be a non-well-orderable set, and $P_{1}$ be a support of $X$. By Lemma \ref{Lemma 2.4}, there is an $x \in X$ and a $\eta \in \text{fix}_{\mathcal{G}}(P_{1})$ such that $\eta(x) \neq x$.  
Let $P_{2}$ be a support of $x$. Let $S(P_{1})$ and $S(P_{2})$ be the set of singletons blocks of $P_{1}$ and $P_{2}$ respectively.
Without loss of generality, we may assume that $P_{2}$ is a refinement of $P_{1}$ such that $\vert P_{2}\vert-\vert P_{1}\vert$ is least. 

Case(i): There exists an $\{a\}\in S(P_{2})\backslash S(P_{1})$. Pick any infinite block from $P_{2}$ (say $P_{2}^{1}$), and let $I(P_{2})$ be the set of infinite blocks of $P_{2}$. Let,
$P_{3}$ be a partition whose singleton blocks are from $S(P_{2}) \backslash \{a\}$
and the infinite blocks are all the blocks of
$(I(P_{2})\backslash \{P_{2}^{1}\})$ as well as $P_{2}^{1}\cup \{a\}$ i.e.,
\begin{center}
 $P_{3}=\{P_{2}^{1}\cup \{a\}\}\cup (P_{2}\backslash \{\{a\},P^{1}_{2}\})$.   
\end{center}
 
Define $f=\{(\pi(a),\pi(x)): \pi\in$ fix$_{\mathcal{G}}(P_{3})\}$. Clearly,  $P_{3}$ is a support of $f$. Let $\pi_{1}(a)=\pi_{2}(a)$ be such that $\pi_{1},\pi_{2}\in$ fix$_{\mathcal{G}}(P_{3})$. Then $\pi_{2}^{-1}\pi_{1}(a)=a$ and $\pi_{2}^{-1}\pi_{1}\in$ fix$_{\mathcal{G}}(P_{3})$, which implies that $\pi_{2}^{-1}\pi_{1}\in$ fix$_{\mathcal{G}}(P_{2})$. Since $P_{2}$ is a support of $x$, we have $\pi_{2}^{-1}\pi_{1}(x)=x$, and so $\pi_{1}(x)=\pi_{2}(x)$. Thus, $f$ is a well-defined function. 

We show that $f$ is injective. 
Otherwise, we can pick $\pi_{1},\pi_{2}\in$ fix$_{\mathcal{G}}(P_{3})$ such that $\pi_{1}(x)=\pi_{2}(x)$ and $\pi_{1}(a)\neq \pi_{2}(a)$. Let $\pi=\pi_{2}^{-1}\pi_{1}$. Then $\pi(x)=x$ but $\pi(a)\neq a$. Let $\pi(a)=b$. We show that $P_{3}$ is a support of $x$ to obtain a contradiction to the assumption that $P_{2}$ is a support of $x$ such that $\vert P_{2}\vert-\vert P_{1}\vert$ is least. Pick any $\tau\in$ fix$_{\mathcal{G}}(P_{3})$. We show that $\tau(x)=x$. Clearly, $(\tau (a),\tau(x))\in f$. Let $\sigma\in$ fix$_{\mathcal{G}}(P_{2})$ be such that $\sigma(b)=\tau(a)$. Since $\pi\in$ fix$_{\mathcal{G}}(P_{3})$, 
\begin{center}
    $(\pi(a),\pi(x))\in f\implies (\sigma\pi(a),\sigma\pi(x))\in \sigma(f)=f$ (as $\sigma\in$ fix$_{\mathcal{G}}(P_{3})$ and $P_{3}$ is a support of $f$).
\end{center}
Since, $P_{2}$ is a support of $x$ and $\sigma\in$ fix$_{\mathcal{G}}(P_{2})$, $\sigma(x)=x$. Since $\pi(x)=x$, we have $\sigma\pi(x)=\sigma(x)=x$. Moreover, $\sigma\pi(a)=\sigma(b)=\tau(a)$. Thus, $(\tau (a),x)\in f$. Consequently, $\tau(x)=x$ as $f$ is a well-defined function and $(\tau (a),\tau(x))\in f$.

Case(ii): There is no $\{a\}\in S(P_{2})\backslash S(P_{1})$ i.e., $S(P_{2})=S(P_{1})$. Since $P_{1}$ is not a support of $x$, $(\exists \eta\in$ fix$_{\mathcal{G}}(P_{1}))\eta(x)\neq x$. 
Since $\eta$ only moves finitely many atoms, it is decomposable into transpositions. Let $\eta=\prod_{1\leq i\leq n} f_{i}$ where $f_{i}$ is a transposition for each $1\leq i\leq n$. Thus there exists at least one $f_{i}$ say $(a,d)$, such that $(a,d)x\neq x$. We follow the steps below:
\begin{enumerate}
\item Let $P_{a}$ and $P_{d}$ be the blocks in $P_{3}$ which contains $a$ and $d$ respectively. 
If $P_{a}=P_{d}$, then $(a,d)\in$ fix$_{\mathcal{G}}(P_{2})$ and so $(a,d)x = x$ as $P_{2}$ is a support of $x$, which is a contradiction. Thus, $P_{a}\neq P_{d}$.
\item Let, $P_{3}$ be a partition whose blocks are $P_{a}\backslash \{a\}$, $P_{d}\cup \{a\}$, as well as all the blocks from $P_{2}\backslash\{P_{a},P_{d}\}$.
\item Consider $f=\{(\pi(a),\pi(x)): \pi\in$ fix$_{\mathcal{G}}(P_{3})\}$. Following the arguments in Case(i), $f$ is a well-defined function, and $P_{3}$ is a support of $f$. 

\item We can slightly modify the arguments of Case(i) to show that $f$ is injective. Assume $f$ is not injective. 
To obtain a contradiction, assume $\pi_{1},\pi_{2},\pi,$ and $b$ as in Case(i). 
\item We show that $P_{3}$ is a support of $x$. Let $\tau\in $ fix$_{\mathcal{G}}(P_{3})$. Let $\sigma\in$ fix$_{\mathcal{G}}(P_{2})$ be such that $\sigma(a)=a$, and $\sigma(b)=\tau(a)$. Working in much the same way
as in Case(i), we can see that $\tau(x)=x$.

\item Since $\{a,d\}\subset P_{d}\cup\{a\}\in P_{3}$ we have $(a,d)\in$ fix$_{\mathcal{G}}(P_{3})$. Thus $(a,d)x=x$ as $P_{3}$ is a support of $x$, which is a contradiction. Thus, $f$ is injective.
\end{enumerate}
In both cases, $dom(f)$ is an infinite subset of $A$, and $rng(f)$ is a subset of $X$. 
\end{proof}

\begin{lem}\label{Lemma 6.3}
{\em 
The following hold:
\begin{enumerate}
    \item The set $A$ of atoms is Dedekind finite in $\mathcal{V}_{fp}$.
    \item The set $A$ of atoms has no infinite amorphous subset in $\mathcal{V}_{fp}$.
\end{enumerate}
}
\end{lem}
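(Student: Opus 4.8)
The plan is to isolate a single structural fact about sets of atoms in $\mathcal{V}_{fp}$ and then derive both parts from it. The fact is: if $Z\subseteq A$ lies in $\mathcal{V}_{fp}$ and $P$ is a support of $Z$, then $Z$ is a union of whole blocks of $P$, apart from the finitely many atoms contained in the singleton blocks of $P$. To prove it, write $P$ as its finitely many singleton blocks $\{a_1\},\dots,\{a_k\}$ together with its infinite blocks $I_1,\dots,I_l$, where $l\geq 1$ since $A$ is infinite. For each $j$ the set $Z\cap I_j$ is definable from $Z$ and $I_j$, both of which have support $P$, so $Z\cap I_j\in\mathcal{V}_{fp}$ with support $P$. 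If $Z\cap I_j$ were a proper non-empty subset of $I_j$, pick $a\in Z\cap I_j$ and $b\in I_j\setminus Z$; the transposition $(a\;b)$ fixes every block of $P$ setwise, hence $(a\;b)\in\mathrm{fix}_{\mathcal{G}}(P)\subseteq\mathrm{sym}_{\mathcal{G}}(Z)$, so $(a\;b)[Z]=Z$, contradicting $a\in Z$ and $b\notin Z$. Thus $Z\cap I_j\in\{\emptyset,I_j\}$ for each $j$, and $Z=(Z\cap\{a_1,\dots,a_k\})\cup\bigcup_{j\in J}I_j$ for some $J\subseteq\{1,\dots,l\}$, with the first summand finite.

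For (1), suppose toward a contradiction that $f\colon\omega\to A$ is an injection in $\mathcal{V}_{fp}$ with support $P$. Applying the structural fact to $Z=\mathrm{ran}(f)$, which is infinite, we get $I_{j_0}\subseteq\mathrm{ran}(f)$ for some infinite block $I_{j_0}$ of $P$. Choose distinct $a,b\in I_{j_0}$ with $a=f(m)$ and $b=f(n)$; then $m\neq n$. Since $(a\;b)$ fixes each block of $P$ setwise, $(a\;b)\in\mathrm{fix}_{\mathcal{G}}(P)\subseteq\mathrm{sym}_{\mathcal{G}}(f)$, so $(a\;b)[f]=f$; as the elements of $\omega$ are fixed by every element of $\mathcal{G}$, this forces $(m,b)\in f$, contradicting $f(m)=a\neq b$. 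Hence no such $f$ exists and $A$ is Dedekind-finite. (One can in fact bypass the block-union description here: the pigeonhole principle already puts two distinct elements of $\mathrm{ran}(f)$ into a common infinite block of $P$, which is all that is needed.)

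For (2), suppose $B\subseteq A$ is an infinite amorphous subset in $\mathcal{V}_{fp}$ and let $P$ be a support of $B$ as above. By the structural fact, $B$ contains an entire infinite block $I_{j_0}$ of $P$. Partition $I_{j_0}$ into two infinite pieces $I'$ and $I''$, and let $P'$ be the refinement of $P$ obtained by replacing $I_{j_0}$ with $I'$ and $I''$. Then $P'$ is again a finite partition of $A$ into infinite blocks and singletons, and $\mathrm{fix}_{\mathcal{G}}(P')\subseteq\mathrm{fix}_{\mathcal{G}}(P)$, so $P'$ is also a support of $B$, and $I',I''\in\mathcal{V}_{fp}$. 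Now $B\cap I'=I'$ and $B\setminus I'\supseteq I''$ are two infinite subsets of $B$ that lie in $\mathcal{V}_{fp}$ and partition $B$, contradicting the amorphousness of $B$. Hence $A$ has no infinite amorphous subset.

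The only step carrying content is the structural fact of the first paragraph; parts (1) and (2) are then short. The minor points needing care — none serious — are checking that the derived sets $Z\cap I_j$, $I'$, $I''$, and $B\setminus I'$ genuinely belong to $\mathcal{V}_{fp}$ (each is defined from objects with support $P$, respectively $P'$), and that refining an infinite block preserves the admissible "infinite blocks plus singletons" shape of a support, so that $P'$ is a legitimate support. Both are immediate from the description of $\mathcal{V}_{fp}$.
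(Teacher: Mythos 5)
Your proof is correct: the structural fact that any $Z\subseteq A$ in $\mathcal{V}_{fp}$ with support $P$ is, modulo the finitely many singleton-block atoms, a union of whole infinite blocks of $P$ (proved via transpositions inside a block, which lie in fix$_{\mathcal{G}}(P)$), and the subsequent block-splitting refinement argument for non-amorphousness, are exactly the kind of symmetry arguments the paper relies on — its own ``proof'' is only a citation of Bruce's Propositions 4.3 and 4.8, and your write-up is essentially a self-contained version of those arguments. The one small addition you make, packaging both parts under a single characterization of the subsets of $A$ in the model, is harmless and if anything cleaner than proving (1) and (2) separately.
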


\begin{proof}
This follows by the arguments of \cite[Propositions 4.3, 4.8]{Bru2016}.
\end{proof}

\begin{lem}\label{Lemma 6.4}
{\em Let $S$ be a non well-orderable set in $\mathcal{V}_{fp}$ with support $E_1$. 
Then, there exists an $x \in S$ with support $E_{2}$ which is a refinement of $E_1$ and an infinite block $P$ in $E_1$ such that the following holds:
\begin{enumerate}
    \item There exists $P_1, P_{2}\subseteq P$ where $P_1,P_2 \in E_2$ and $P_2$ is infinite.
    \item There exists $a \in P_1$ and $b \in P_2$ such that $(a, b) x \not = x$.
\end{enumerate}
}
\end{lem}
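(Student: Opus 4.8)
The plan is to imitate the proof of Lemma~\ref{Lemma 6.2}, while keeping careful track of which blocks are infinite. Since $S$ is non-well-orderable with support $E_1$, Lemma~\ref{Lemma 2.4} produces some $x\in S$ and some $\eta\in\text{fix}_{\mathcal{G}}(E_1)$ with $\eta(x)\neq x$. As $\eta$ fixes every block of $E_1$ setwise, it is the (commuting) product $\eta=\prod_{B\in E_1}\eta_B$, where $\eta_B$ agrees with $\eta$ on $B$ and is the identity elsewhere; since $\eta$ is a finite product of the $\eta_B$ and $\eta(x)\neq x$, some $\eta_P$ with $P\in E_1$ satisfies $\eta_P(x)\neq x$. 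Being a nontrivial finitely supported permutation of $P$, $\eta_P$ moves at least two atoms of $P$, so $|P|\geq 2$, and hence $P$ is infinite by our standing convention that supports consist of infinite blocks and singletons only. Writing $\eta_P$ as a product of transpositions of atoms of $P$ and applying the same pigeonhole argument once more, I obtain a transposition $(a,b)$ with $a,b\in P$ and $(a,b)x\neq x$.

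Next I would fix any support $E_x$ of $x$ and let $E_2$ be the common refinement of $E_1$ and $E_x$ with every finite block of size $\geq 2$ split into singletons; then $E_2$ is a support of $x$, refines $E_1$, and consists of infinite blocks and singletons. Let $B_a,B_b\in E_2$ be the blocks of $a$ and $b$; since $E_2$ refines $E_1$ and $a,b\in P$, both are contained in $P$. If $B_a=B_b$ then $(a,b)\in\text{fix}_{\mathcal{G}}(E_2)\subseteq\text{sym}_{\mathcal{G}}(x)$, contradicting $(a,b)x\neq x$; so $B_a\neq B_b$. If one of them is infinite — say $B_b$, after interchanging the labels of $a$ and $b$ if necessary, which is harmless as $(a,b)x=(b,a)x$ — then $P_1:=B_a$ and $P_2:=B_b$, together with the atoms $a\in P_1$ and $b\in P_2$, already witness the lemma.

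It remains to handle the case $B_a=\{a\}$ and $B_b=\{b\}$. As $P$ is infinite and equals the finite union of the $E_2$-blocks contained in it, some such block $Q\subseteq P$ is infinite; since $Q$ is a block distinct from the singletons $\{a\}$ and $\{b\}$, every $c\in Q$ satisfies $c\neq a,b$. The elementary identity $(a,b)=(a,c)(c,b)(a,c)$ gives $(a,b)x=(a,c)\bigl((c,b)\bigl((a,c)x\bigr)\bigr)$, so if both $(a,c)x=x$ and $(b,c)x=x$ held we would get $(a,b)x=x$, a contradiction. Hence, for any fixed $c\in Q$, at least one of $(a,c)x\neq x$ and $(b,c)x\neq x$ holds, and accordingly $(P_1,P_2):=(\{a\},Q)$ with atoms $(a,c)$, or $(P_1,P_2):=(\{b\},Q)$ with atoms $(b,c)$, witnesses the lemma.

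I expect the only real work to be in the first paragraph: correctly isolating an \emph{infinite} $E_1$-block $P$ together with a transposition inside it that moves $x$, which forces one to use both that $\eta$ is finitely supported and the no-finite-blocks convention. Once that is in place, the conjugation identity $(a,b)=(a,c)(c,b)(a,c)$ trivializes the rest; in particular, in contrast to the proof of Lemma~\ref{Lemma 6.2}, no minimality (``least support'') assumption on $E_2$ is needed.
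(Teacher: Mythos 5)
Your proof is correct, but it takes a genuinely different route from the paper's. The paper fixes $E_2$ to be a refinement support of $x$ with $\vert E_2\vert-\vert E_1\vert$ least, locates an infinite $E_1$-block $P$ that $E_2$ splits into pieces $P_1,\dots,P_n$ (with $P_2$ infinite), merges $P_1$ and $P_2$ into a coarser partition $E_3$, and invokes minimality to obtain $\psi\in\mathrm{fix}_{\mathcal{G}}(E_3)$ with $\psi(x)\neq x$; decomposing $\psi$ into transpositions inside the blocks of $E_3$ then produces a transposition crossing from $P_1$ to $P_2$ that moves $x$. You instead extract, directly from the given $\eta\in\mathrm{fix}_{\mathcal{G}}(E_1)$ moving $x$, a transposition $(a,b)$ inside a single infinite $E_1$-block $P$ with $(a,b)x\neq x$ (via the block-by-block factorization of $\eta$ and the convention that blocks are infinite or singletons), then take an \emph{arbitrary} refinement support $E_2$, normalized so its finite blocks are singletons, and finally use the conjugation identity $(a,b)=(a,c)(c,b)(a,c)$ to trade an endpoint sitting in a singleton $E_2$-block for an element $c$ of an infinite $E_2$-block $Q\subseteq P$. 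This dispenses with the minimality argument altogether and is slightly more robust (no need to check that the merged partition is again an admissible, strictly coarser support), at the cost of the small case analysis on $B_a,B_b$; the paper's route yields the extra, unused, structural fact that $P_1,P_2$ are two pieces whose merging destroys supporthood. Both arguments establish exactly properties (1)--(2) of the statement, which is all that the proof of Theorem \ref{Theorem 6.5}(3) consumes, so your argument is a valid substitute for the paper's.
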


\begin{proof}
Since $S$ is not well-ordered, and $E_{1}$ is a support of $S$, there
is a $x \in S$ and a $\phi \in $fix $_{\mathcal{G}}(E_{1})$ such that $\phi(x)\neq x$. 
Let $E_{2}$ be a support of $x$ which is a refinement of $E_{1}$ such that $\vert E_{2}\vert -\vert E_{1}\vert$ is least.
Then there exists an infinite block $P\in E_1$ such that $P=\bigcup_{1\leq i\leq n}P_{i}$ for some $n\geq 2$ where $P_{i}\in E_{2}$ for each $1\leq i\leq n$.
At least one of these $P_{i}$'s must be infinite, say $P_2$. 
Let $E_{3}$ be a partition whose blocks are $(P_{1}\cup P_{2})$, $P_{3}$, ..., $P_{n}$ and all blocks of $E_{2} \backslash \{P_{1},P_{2},...P_{n}\}$. 
Clearly, there exists a $\psi\in$ fix$_{\mathcal{G}}(E_{3})$ such that $\psi(x)\neq x$. Otherwise, $E_{3}$ is a support for $x$ which contradicts the assumption that $E_{2}$ is a support of $x$ such that $\vert E_{2}\vert -\vert E_{1}\vert$ is least.
Since $\psi$ moves finitely many atoms, it is decomposable into transpositions.  Let $\psi=\prod_{1\leq i\leq n} f_{i}$ where $f_{i}$ is a transposition for each $1\leq i\leq n$. Thus there exists at least one $f_{i}$ say $(a,b)$, such that $(a,b)x\neq x$. Moreover, $a,b\in P_{1}\cup P_{2}$, such that $a$ and $b$ both cannot come from either $P_{1}$ or $P_{2}$ (otherwise $(a,b)x=x$ since $E_{2}$ is a support of $x$). Thus one of $a,b$ belongs to $P_{1}$ and the other belongs to $P_{2}$. Without loss of generality, we assume that $a\in P_{1}$ and $b\in P_{2}$. 
\end{proof}

\begin{thm}\label{Theorem 6.5}
{\em Fix any $n,m\in \omega\backslash\{0,1\}$. In $\mathcal{V}_{fp}$, the following hold:

\begin{enumerate}
\item $\mathsf{CS}$ and $\mathsf{A}$.
\item $\mathsf{RRT}^{m}_{n}$ and $\mathsf{RT}^{m}_{n}$.
\item There are no amorphous subsets.\footnote{The proof of this assertion is due to the second and the third authors.}
\item $\mathsf{EDM}$.
\end{enumerate}
}    
\end{thm}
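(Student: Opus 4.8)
The plan is to verify the four assertions in $\mathcal{V}_{fp}$ separately, reusing the structural lemmas already established.

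For (1), the group $\mathcal{G}$ consists of permutations of $A$ each moving only finitely many atoms, so Lemma \ref{Lemma 2.5} applies directly and yields both $\mathsf{CS}$ and the Antichain Principle $\mathsf{A}$ in $\mathcal{V}_{fp}$; nothing further is needed.

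For (2), I would argue that $\mathsf{RRT}^{m}_{n}$ holds for every infinite $X \in \mathcal{V}_{fp}$. If $X$ is well-orderable, invoke Fact \ref{Fact 3.1}(3). Otherwise, by Lemma \ref{Lemma 6.2}, $X$ contains a copy of an infinite subset $A'$ of $A$; as in the proof of Theorem \ref{Theorem 4.1}(1), it suffices to show $\mathsf{RRT}^{m}_{n}$ holds for $A'$. Given an $n$-bounded coloring $\chi\colon [A']^{m}\to C$ with support $P$ (a finite partition of $A$), let $A''$ be $A'$ minus the finitely many singleton blocks of $P$ that meet $A'$, together with one representative chosen from each infinite block of $P$ meeting $A'$ — wait, more carefully: the point is that inside any single infinite block $B$ of $P$ with $B\cap A'$ infinite, fix$_{\mathcal{G}}(P)$ acts highly transitively on $B\cap A'$, so the argument of Theorem \ref{Theorem 4.1}(1) reproduces $n+1$ many images of a putative monochromatic $m$-set and contradicts $n$-boundedness. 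Thus $A''=(A'\cap B)\setminus(\text{finite})$ for a suitable infinite block $B$ is an infinite polychromatic set, and Fact \ref{Fact 3.1}(3) lifts this back to $X$. For $\mathsf{RT}^{m}_{n}$ the same template works: reduce to $A'$, take a support $P$, and observe that on an infinite block $B$ with $B\cap A'$ infinite the coloring $\chi\colon[A'\cap B]^{m}\to n$ must be constant on all $m$-subsets avoiding the finite exceptional set, since any two such $m$-subsets are conjugate by an element of fix$_{\mathcal{G}}(P)$ fixing $\chi$; hence that cofinite piece of $B\cap A'$ is monochromatic. This is essentially Blass's argument for $\mathsf{RT}$ in $\mathcal{N}_{1}$.

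For (3), suppose toward a contradiction that $S\in\mathcal{V}_{fp}$ is amorphous; then $S$ is infinite and Dedekind-finite and not well-orderable. Apply Lemma \ref{Lemma 6.4} to $S$ with a support $E_1$: we obtain $x\in S$ with a refining support $E_2$, an infinite block $P$ of $E_1$ split by $E_2$, blocks $P_1,P_2\subseteq P$ with $P_2$ infinite, and atoms $a\in P_1$, $b\in P_2$ with $(a,b)x\neq x$. The strategy is to use the infinite block $P_2$ to manufacture infinitely many distinct conjugates of $x$ lying in $S$, and moreover to split $S$ into two infinite pieces definably from $E_2$, contradicting amorphousness. Concretely, for each $b'\in P_2$ let $\tau_{b'}=(b,b')\in$ fix$_{\mathcal{G}}(E_3)$ where $E_3$ merges $P_1\cup P_2$; then consider the map $b'\mapsto (a,b')x$ (or an appropriate conjugate), show it is injective on a cofinite subset of $P_2$ with range inside $S$, and then partition $S$ according to membership in this infinite, co-infinite subset — the infinitude of the complement following because $E_1$ already supported $S$ and the orbit exhausts at most the $E_2$-orbit of $x$, which one shows is not all of $S$. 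The main obstacle is exactly this last point: carefully verifying both that the constructed subset of $S$ is infinite and that its complement in $S$ is infinite, using only the support data $E_1,E_2$ and the minimality of $\vert E_2\vert-\vert E_1\vert$ from Lemma \ref{Lemma 6.4}. This bookkeeping — patterned on Bruce's proof that $A$ has no amorphous subset in Lemma \ref{Lemma 6.3}(2), but now applied to an arbitrary $S$ via Lemma \ref{Lemma 6.4} — is where the real work lies.

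For (4), I would show $\mathsf{EDM}$ holds in $\mathcal{V}_{fp}$ by the dichotomy: let $G=(V_G,E_G)$ with $V_G$ uncountable and $f\colon[V_G]^2\to\{0,1\}$. If $V_G$ is well-orderable, apply Fact \ref{Fact 3.1}(12) (or the well-orderable case of $\mathsf{EDM}$). Otherwise $V_G$ is uncountable and non-well-orderable, so by Lemma \ref{Lemma 6.2} it contains a copy of an infinite $A'\subseteq A$; pick a support $P$ of $f$ restricted to this copy and an infinite block $B$ with $B\cap A'$ infinite. By the transitivity of fix$_{\mathcal{G}}(P)$ on $B\cap A'$, the coloring $f$ is constant on all pairs from $B\cap A'$ avoiding the finite exceptional set — giving a countably infinite monochromatic set in whichever color it is. If that color is $1$ we are done immediately; if it is $0$, we in fact have an infinite $0$-monochromatic (independent-complement) set, and since $V_G$ is uncountable while our block-argument only used a countable piece, a short additional argument — combining several infinite blocks, or arguing that the whole of $A'$ carries a $P$-invariant $2$-coloring forcing an uncountable $0$-homogeneous set — promotes this to an uncountable $0$-monochromatic set. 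I expect assertion (3) to be the genuine crux; assertions (1), (2), and (4) follow the now-standard "reduce to a block of atoms, exploit transitivity of the support group" pattern used repeatedly in Section 4.
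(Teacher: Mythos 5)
Your parts (1) and (2) are fine and follow the paper's route (Lemma \ref{Lemma 2.5} for (1); for (2), reduce via Lemma \ref{Lemma 6.2} to an infinite $A'\subseteq A$, take a finite-partition support $P$ of the coloring, pass to an infinite block $p$ with $p\cap A'$ infinite, and use the action of fix$_{\mathcal{G}}(P)$ inside that block exactly as in Theorem \ref{Theorem 4.1}(1) — no finite exceptional set even needs to be removed). The problems are in (3) and (4). In (3) you assemble the right ingredients (Lemma \ref{Lemma 6.4}, injectivity of $b'\mapsto (a,b')x$ with range in $S$), but you leave precisely the decisive step open, and the split you sketch would not deliver it. Partitioning $S$ into $T=\{(a,b')x: b'\in P_{2}\}$ and $S\setminus T$ requires (i) that the partition be a set of $\mathcal{V}_{fp}$, i.e.\ admit a support, which is not clear for $T$ as defined, and (ii) that $S\setminus T$ be infinite; your justification (``the orbit is not all of $S$'') at best gives $T\neq S$, and if $S$ really were amorphous then any infinite $T\in\mathcal{V}_{fp}$ would automatically have finite complement, so proving $S\setminus T$ infinite is the whole problem, not bookkeeping. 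The paper's device, which is absent from your sketch, is to split the atoms rather than $S$ directly: since $A$ has no amorphous subset (Lemma \ref{Lemma 6.3}(2)), write $P_{2}=P_{2,1}\cup P_{2,2}$ with both halves infinite, let $E_{3}$ be the partition with blocks $P_{1}\backslash\{a\}$, $P_{2,1}\cup\{a\}$, $P_{2,2}$ and the remaining blocks of $E_{2}$, and set $Y=\{\pi(x):\pi\in$ fix$_{\mathcal{G}}(E_{3})\}$. Then $Y$ is supported by $E_{3}$ (so it is in the model), $U=\{(a,d)x:d\in P_{2,1}\}\subseteq Y$ and $V=\{(a,e)x:e\in P_{2,2}\}\subseteq S\backslash Y$ are both infinite by the injectivity claim (the paper's Claim \ref{claim 6.6}, which itself needs the transposition computations you only gesture at), so $Y$ and $S\backslash Y$ split $S$ into two infinite pieces.

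Part (4) takes a wrong turn. The block argument does not give a ``countably infinite monochromatic set'': any infinite subset of (a copy of) $A$ is Dedekind-finite in $\mathcal{V}_{fp}$ (Lemma \ref{Lemma 6.3}(1)), hence never countably infinite, so in the color-$1$ case you are not ``done immediately'', and the proposed promotion of an infinite $0$-monochromatic set of atoms to an \emph{uncountable} $0$-monochromatic set (``combining several infinite blocks'') is unsubstantiated — nothing forces the coloring to cohere across blocks or across the rest of $V_{G}$. The paper argues differently: assuming the $\mathsf{EDM}$ conclusion fails (all $1$-monochromatic sets finite, all $0$-monochromatic sets countable), it takes the copy $H$ of $A'$ from Lemma \ref{Lemma 6.2}, applies $\mathsf{RT}^{2}_{2}$ (already established in part (2)) to get an infinite monochromatic $B\subseteq H$, which must be $0$-colored and hence countable, hence countably infinite; this makes $H$, and so $A$, Dedekind-infinite, contradicting Lemma \ref{Lemma 6.3}(1). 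That contradiction-via-Dedekind-finiteness is the missing idea; without it your case analysis does not produce either witness required by $\mathsf{EDM}$.
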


\begin{proof}
(1). This follows by Lemma \ref{Lemma 2.5} since any $\phi\in\mathcal{G}$ moves only finitely many atoms.

(2). Let $X \in \mathcal{V}_{fp}$ be an infinite set. In view of Fact \ref{Fact 3.1}(2,3), we may assume,
without loss of generality, that $X$ is not well-orderable. By Lemma \ref{Lemma 6.2}, $X$ contains a copy of an infinite subset of $A$, say $A'$. Let $\chi:[A']^m \rightarrow C$ be an $n$-bounded coloring in $\mathcal{V}_{fp}$ with support $P$. Since $A'$ is infinite and $P$ is a finite partition of $A$,
there is a $p \in P$ such that $p\cap A'$ is infinite. By the arguments in the proof of Theorem \ref{Theorem 4.1}(1), the (infinite) subset $p\cap A'$ of $A$ is polychromatic for $\chi$, and thus $\mathsf{RRT}^{m}_{n}$ holds for $A'$. Thus, $\mathsf{RRT}^{m}_{n}$ holds for $X$ by Fact \ref{Fact 3.1}(3).

Let $c : [A']^{m}\rightarrow n$ be a coloring in $\mathcal{V}_{fp}$ and $P$ be a support for $c$. Then there exists $p\in P$ such that $p\cap A'$ is infinite. We can see that $p\cap A'$ is monochromatic for $c$. 
Let $x,y\in [p\cap A']^m$.
Consider a permutation $\phi$ of $A$ such that $\phi[x]=y$, $\phi[y]=x$ and $\phi$ fixes all the other atoms. Clearly, $\phi\in \mathcal{G}$ (as $\phi$ moves only finitely many atoms) and $\phi \in$ fix$_{\mathcal{G}}(P)$. Thus we have
that $\phi(c) = c$ as $P$ is a support for $c$. 
In particular, if $c(x) = i$ then we have that
$(y, i) = (\phi[x], \phi(i)) = \phi((x, i)) \in \phi(c) = c$,
which shows that $c(y) = i$ as well. Since $y \in [p\cap A']^{m}$ was arbitrary, we conclude that the set $[p\cap A']^{m}$ is
monochromatic in color $i$. Thus $\mathsf{RT}^{m}_{n}$ holds for $A'$ and so $\mathsf{RT}^{m}_{n}$ holds for $X$.

(3).
Let $S$ be an infinite set in $\mathcal{V}_{fp}$. If $S$ is well-orderable, then it is not amorphous. Suppose $S$ is a non-well-orderable set. We show that $S$ is not amorphous.
Let $E_1$ be a support for $S$. 
By Lemma \ref{Lemma 6.4}, there is an $x \in S$ with support $E_{2}$, which is a refinement of $E_1$, and an
infinite block $P$ in $E_1$ such that the following hold:

\begin{enumerate}
    \item There exists $P_1, P_{2}\subseteq P$ where $P_1,P_2 \in E_2$ and $P_2$ is infinite.
    \item There exists $a\in P_{1},b\in P_{2}$, such that $(a,b)x\neq x$.
\end{enumerate}

\begin{claim}\label{claim 6.6}
{\em 
The following holds:
\begin{enumerate}[(a).]
\item $(a,b_i) x \not = (a,b) x$ for any $b_i \in P_2\backslash \{b\}$. 

\item $(a,b_i) x \not = x$ for any $b_i \in P_2\backslash \{b\}$. 

\item $(a,b_i)x \not = (a,b_j) x$ for any $b_i, b_j \in P_2$ such that $b_{i}\neq b_{j}$.
\end{enumerate} 
}
\end{claim}

\begin{proof}
$(a)$.  Suppose $(a,b_{i}) x = (a,b) x$ for some $b_i \in P_2\backslash \{b\}$. Then, 
\begin{center}
    $(a,b)(b_i,a)(a,b_i) x = (a,b) (b_i,a) (a,b) x \implies (a,b) x = (b,b_i) x $.
\end{center} 
However, $(b,b_i) \in$ fix$_{\mathcal{G}}(E_2)$ and so $(b,b_i) x = x$, while $(a,b) x \not = x$ which is a contradiction.

$(b)$. Suppose $(a, b_i) x = x$ for some $b_i \in P_2\backslash \{b\}$. Thus $(a,b_i)(b, b_i) (a, b_i)x=(a, b_i)(b, b_i) x$. 
Since $(b, b_i) \in$ fix$_{\mathcal{G}}(E_2)$ and so $(b, b_i)x=x$, we have $(a, b) x = (a, b_i) x $, which contradicts $(a)$.

$(c)$. Assume $(a, b_i)x = (a, b_j) x $ for some $b_i, b_j \in P_2$ such that $b_{i}\neq b_{j}$.
Thus, $(a, b_i)(a, b_j)(a, b_i) x =  (a, b_i)(a, b_j)(a, b_j) x$. 
Since $(b_i, b_j) \in$ fix$_{\mathcal{G}}(E_2)$ and so $(b_i, b_j)x=x$, we have 
$(b_i, b_j) x = (a, b_i) x \implies x = (a, b_i)x$     
which contradicts $(b)$.
\end{proof}

We note that for any $b_i \in P_2$, $\{a, b_i\} \subseteq P$. Thus $(a, b_i) \in$ fix$_{\mathcal{G}}(E_1)$. Since $E_1$ is a support of $S$, we have $(a, b_i)S=S$, and so $(a, b_i)y \in S$ for any $y\in S$. Thus,

\begin{center}
    $T= \{(a,b_{i})x : b_{i}\in P_{2}\}$ 
\end{center}
is an infinite subset of $S$ by claim \ref{claim 6.6}$(c)$.
Since the set of atoms $A$ has no infinite amorphous subset in $\mathcal{V}_{fp}$ by Lemma \ref{Lemma 6.3}(2), and $P_{2}$ is an infinite subset of $A$, we can pick a partition of $P_{2}$ into two infinite sets, say $P_{2,1}$ and $P_{2,2}$. Define,

\begin{center}
$U = \{(a, d_i)x : d_i \in P_{2,1}\}$, and
$V = \{(a, e_i)x: e_i \in P_{2,2}\}$.   
\end{center}

Let $E_{3}$ be a partition whose blocks are $P_1 \backslash \{a\}$, $P_{2,1} \cup \{a\}$, $P_{2,2}$, and all blocks of $E_2 \backslash \{P_1, P_2\}$.
Define $Y = \{\pi (x) : \pi \in$ fix$_{\mathcal{G}}(E_3)\}$.
By claim \ref{claim 6.6}$(c)$, $U$ and $V$ are infinite sets. Since $U\subseteq Y$ and $V\subseteq S\backslash Y$, we have $Y$ and $S\backslash Y$ are infinite. Moreover, $E_{3}$ is a support of $Y$ and $S$, as $E_{1}$ is a support of $S$ and $E_{3}$ is a refinement of $E_{1}$. Thus, $E_{3}$ is a support of $S\backslash Y$.
Consequently, $Y$ and $S\backslash Y$ are infinite sets in $\mathcal{V}_{fp}$, and $Y$ and $S\backslash Y$ form a partition of $S$.

(4). In $\mathcal{V}_{fp}$, let $G=(V_{G}, E_{G})$ be a graph and $f:[V_{G}]^{2}\rightarrow \{0,1\}$ be a coloring such that all sets monochromatic in color $0$ are countable, and all sets monochromatic in color $1$ are finite. If $V_{G}$ is well-orderable then we are done by Fact \ref{Fact 3.1}(12). Otherwise, by Lemma \ref{Lemma 6.2}, there exists a bijection from an infinite subset $A'$ of $A$ onto some $H\subset V_{G}$.
Define the following partition of $[H]^{2}$:
$X=\{\{a,b\}\in [H]^{2}: f(\{a,b\})=0\}$, $Y=\{\{a,b\}\in [H]^{2}: f(\{a,b\})=1\}$.    
Since $(H, E_{G}\restriction H)$ is an infinite graph where all sets monochromatic in color $1$ are finite, there is no infinite subset $B' \subseteq H$ such that $[B']^2 \subseteq Y$. Since $\mathsf{RT}^{2}_{2}$ holds in $\mathcal{V}_{fp}$, there is an infinite subset $B \subseteq H$ such that $[B]^2 \subseteq X$. So $(H, E_{G}\restriction H)$ has an infinite set monochromatic in color $0$, say $C$.
By assumption, $C$ is a countably infinite subset of $H$. Thus $A'$ is Dedekind-infinite in $\mathcal{V}_{fp}$ since $\vert H\vert=\vert A'\vert$. Consequently, the set $A$ of atoms is Dedekind-infinite in $\mathcal{V}_{fp}$, which contradicts the fact that $A$ is a Dedekind-finite set in $\mathcal{V}_{fp}$ (see Lemma \ref{Lemma 6.3}(1)).
\end{proof}
\section{Concluding Remarks}
\begin{remark}\label{Remark 7.1}
In \cite[a part of Question 6.3]{Tac2022}, it is asked whether $\mathsf{AC^{LO}}$ implies $\mathsf{CAC}^{\aleph_{0}}_{1}$ in $\mathsf{ZFA}$. We show that {\em $\mathsf{AC^{LO}}$ does not imply $\mathsf{CAC}^{\aleph_{0}}_{1}$ in $\mathsf{ZFA}$}. 
Recently, Tachtsis \cite[Theorem 3]{Tac2024} constructed the following permutation model to prove that $\mathsf{AC^{LO}}$ does not imply $\mathsf{EDM}$ in $\mathsf{ZFA}$:  
We start with a model $M$ of $\mathsf{ZFA} + \mathsf{AC}$ with an $\aleph_{1}$-sized set $A$ of atoms which is a disjoint union of $\aleph_{1}$ unordered pairs, so that $A =\bigcup\{A_{i}: i < \aleph_{1}\}$, $\vert A_{i}\vert = 2$ for all $i < \aleph_{1}$, and $A_{i} \cap A_{j} = \emptyset$ for all $i, j < \aleph_{1}$ with $i \neq j$. Let $\mathcal{G}$ be the group of all permutations $\phi$ of $A$ such that $(\forall i < \aleph_{1})(\exists j < \aleph_{1})(\phi(A_{i}) = A_{j})$. Let $\mathcal{F}$ be the normal filter of subgroups of $\mathcal{G}$ generated by the pointwise stabilizers fix$_{G}(E)$, where $E =\bigcup \{A_{i}:i\in I\}$ for some $I \in [\aleph_{1}]^{\leq\aleph_{0}}$. Let $\mathcal{N}$ be the permutation model determined by $M$, $\mathcal{G}$ and $\mathcal{F}$. In $\mathcal{N}$, the following holds (cf. \cite[claims in the proof of Theorem 3]{Tac2024}):
\begin{enumerate}
\item $\mathsf{AC^{LO}}$ holds.
\item The power set of $\mathcal{A} = \{A_{i} : i < \aleph_{1}\}$ consists exactly of the countable
and the co-countable subsets of $\mathcal{A}$.
\item No co-countable subset of $A$ has a choice function in $\mathcal{N}$.
\end{enumerate}

Modifying the proof of \cite[claim 3 of Theorem 3]{Tac2024}), one can see that $\mathsf{CAC}^{\aleph_{0}}_{1}$ fails in $\mathcal{N}$. Deﬁne a binary relation $\leq$ on the uncountable set $A =\bigcup \mathcal{A}$ as follows: for all $a,b \in A$, let $a\leq b$ if and only if $a = b$ or $a \in A_{n}$, $b \in A_{m}$ and $n < m$. It is easy to see that $\leq$ is a partial order on $A$. If $C \subset A$ is an antichain in $(A,\leq)$, then $C \subseteq A_{i}$ for some $i \in \aleph_{1}$. Thus, all antichains in $(A,\leq)$ are finite as $A_{i}$ is finite for all $i\in \aleph_{1}$. Since any two elements of $A$ are $\leq$-comparable if and only if they belong to distinct $A_{i}$'s, we can see that no chain in $(A,\leq)$ is uncountable by (2) and (3).
\end{remark}

\begin{remark}\label{Remark 7.2}
In \cite[a part of Question 6.4]{Tac2022}, it is asked whether $\mathsf{WOAM}$ implies Kurepa's theorem ($\mathsf{CAC}^{\aleph_{0}}_{1}$) in $\mathsf{ZF}$. The {\em answer to the above question is in the affirmative}. Firstly,
we recall the following known facts:
\begin{enumerate}
    \item The statement ``Every infinite poset $(Q, \leq)$ such that $Q$ is well-orderable has either an infinite chain or an infinite anti-chain'' holds in $\mathsf{ZF}$ (see \cite[Proof of Claim 5]{Tac2016}).
    \item $\mathsf{WOAM}$ implies $\mathsf{CAC}^{\aleph_{0}}$ in $\mathsf{ZF}$ (see \cite[Theorem 6]{Tac2024}).
    \item $\mathsf{WOAM} + \mathsf{CAC}$ implies $\mathsf{CAC}^{\aleph_{0}}_{1}$ in $\mathsf{ZF}$ (see \cite[Theorem 8(1)]{Tac2022}).
\end{enumerate}
We note that $\mathsf{CAC^{\aleph_{0}}}$ implies $\mathsf{CAC}$ in $\mathsf{ZF}$. Let $(P, \leq)$ be an infinite poset. If all chains and all antichains in $P$ are finite, then by $\mathsf{CAC^{\aleph_{0}}}$, $P$ is countably infinite (and hence well-orderable). By (1), $(P, \leq)$ has either an infinite chain or an infinite antichain, contradicting our hypothesis on $P$. 
Thus, by (2), $\mathsf{WOAM}$ implies $\mathsf{CAC}$ in $\mathsf{ZF}$ and so $\mathsf{WOAM}$ implies $\mathsf{CAC}^{\aleph_{0}}_{1}$ in $\mathsf{ZF}$ by applying (3).
\end{remark}

\begin{remark}\label{Remark 7.3} We remark that {\em if $X\in \{\mathsf{CAC^{\aleph_{0}}}, \mathsf{CAC_{1}^{\aleph_{0}}}, \mathsf{DT}, \mathsf{CS}, \mathsf{A},\mathsf{WOAM}, \mathsf{MC}$, ``There are no amorphous sets'', $\mathsf{HT}\}$, then $X$ and $\mathsf{RRT}^{m}_{n}$ are mutually independent in $\mathsf{ZFA}$.}
This follows by the following known facts and the consequences of Theorem \ref{Theorem 4.1}(1,2):
\begin{enumerate}

\item Tachtsis constructed a permutation model $\mathcal{N}$ in the proof of \cite[Theorem 2.1]{Tac2016} where $\mathsf{AC_{2}^{-}}$ fails and the principle ``Every infinite poset has an infinite chain or an infinite antichain'' holds. Thus $\mathsf{RRT}^{m}_{n}$ fails in $\mathcal{N}$ by Proposition \ref{Proposition 3.3}(1).
\item Tachtsis proved that $\mathsf{WOAM}$ holds in $\mathcal{N}$ (see \cite[Lemma 2]{Tac2016}) and $\mathsf{DT}$ holds in $\mathcal{N}$ (see \cite{Tac2019}).

\item Banerjee \cite{Ban2023} proved that $\mathsf{CAC_{1}^{\aleph_{0}}}$ holds in $\mathcal{N}$. Furthermore, Banerjee and Gopaulsingh \cite{BG2023} observed that $\mathsf{CS}$ and $\mathsf{A}$ hold in $\mathcal{N}$, where as Banerjee and Gyenis \cite{BG2021} observed that $\mathsf{CAC^{\aleph_{0}}}$ holds in $\mathcal{N}$.

\item Since $\mathsf{DC}$ does not imply  $\mathsf{CAC^{\aleph_{0}}_{1}}$ in $\mathsf{ZF}$ (see \cite[Corollary 4.6]{Ban2023}) and $\mathsf{DC}$ implies $\mathsf{DF=F}$ in $\mathsf{ZF}$, we obtain that $\mathsf{DF=F}$ (and thus $\mathsf{RRT}^{m}_{n}$) does not imply $\mathsf{CAC^{\aleph_{0}}_{1}}$ in $\mathsf{ZF}$ by Proposition \ref{Proposition 3.2}(2).

\item In \cite[the proof of Theorem 4.1(4)]{BG2023}, Banerjee and Gopaulsingh proved that $\mathsf{DF=F}$ does not imply $\mathsf{CAC^{\aleph_{0}}}$ in $\mathsf{ZF}$. By Proposition \ref{Proposition 3.2}(2), $\mathsf{RRT}^{m}_{n}$ does not imply $\mathsf{CAC^{\aleph_{0}}}$ in $\mathsf{ZF}$.

\item In the permutation model $\mathcal{V}$ from \cite[Theorem 9(4)]{Tac2022}, $\mathsf{DT}$ fails and $\mathsf{DC_{\aleph_{1}}}$ holds, and hence $\mathsf{DC}$, and $\mathsf{DF=F}$ hold as well. By Proposition \ref{Proposition 3.2}(2), $\mathsf{RRT}^{m}_{n}$ does not imply $\mathsf{DT}$ in $\mathsf{ZFA}$.

\item In the second Fraenkel model $\mathcal{N}_{2}$, $\mathsf{AC}_{2}^{-}$ fails
whereas $\mathsf{MC}$ and the statement ``There are no amorphous
sets” hold (see \cite{HR1998}). Thus by Proposition \ref{Proposition 3.2}(2), $\mathsf{RRT}^{m}_{n}$ fails in $\mathcal{N}_{2}$. 

\item $\mathsf{HT}$ is true in $\mathcal{N}_{2}$ where $\mathsf{RRT}^{m}_{n}$ fails, since $\mathsf{MC}^{\omega}$ (Every denumerable family of non-empty sets has a multiple choice function) holds in $\mathcal{N}_{2}$ and $\mathsf{MC}^{\omega}$ implies $\mathsf{HT}$ in $\mathsf{ZF}$(see \cite[Proposition 1]{Tac2022a}, \cite{Fer2023}). However, $\mathsf{HT}$ fails in the basic Fraenkel model $\mathcal{N}_{1}$  since $\mathsf{HT}$ implies that there are no amorphous sets in $\mathsf{ZF}$ (see \cite[Theorem 1(5)]{Tac2022a}, \cite{Fer2023}).
Thus, by Theorem \ref{Theorem 4.1}(1), $\mathsf{RRT}^{m}_{n}$ does not imply $\mathsf{HT}$ in $\mathsf{ZFA}$.  
\end{enumerate}
\end{remark}

\section{Questions}

Fix $m,n\in\omega\backslash\{0,1\}$. 
\begin{question}
Does there exist a model of $\mathsf{ZF}$ where $\mathsf{RT}^{m}_{n}$ holds but $\mathsf{RRT}^{m}_{n}$ fails?   
\end{question}

\begin{question}
Does the Boolean Prime Ideal theorem  imply $\mathsf{RRT}^{m}_{n}$ in $\mathsf{ZF}$?   
\end{question}

\begin{question}
Does $\mathsf{RRT}^{m}_{n}$ imply K\H{o}nig's lemma in $\mathsf{ZF}$ or in $\mathsf{ZFA}$?  
\end{question}

\begin{question}
Does $\mathsf{EDM}$ imply $\mathsf{RRT}^{2}_{2}$ in $\mathsf{ZF}$ or in $\mathsf{ZFA}$?  
\end{question}

\begin{question}
Does $\mathsf{CRT}$ hold in the basic Fraenkel model $\mathcal{N}_{1}$?    
\end{question}

\end{document}